\newtheorem{theorem}{Theorem}
\newtheorem{proposition}[theorem]{Proposition}
\newtheorem{corollary}[theorem]{Corollary}
\newtheorem{lemma}[theorem]{Lemma}
\theoremstyle{definition}
\newtheorem{remark}[theorem]{Remark}
\newtheorem{example}[theorem]{Example}
\numberwithin{equation}{section}
\numberwithin{theorem}{section}
\def\<{\langle}
\def\>{\rangle}
\def\g{\hat{\rho}}
\def\o{\omega}
\def\l{\lambda}
\def\N{\mathbb{N}}
\def\R{\mathbb{R}}
\def\H{\mathbb{H}}
\def\P{\mathcal{P}}
\def\O{O}
\def\lk{\mathrm{lk}}
\def\PSL{\operatorname{PSL}}
\def\curl{\operatorname{curl}}
\def\PSL{\operatorname{PSL}}
\begin{document}
\bibliographystyle{plain} \title[Helicity and linking]
{Helicity, linking and the distribution of null-homologous periodic orbits for Anosov flows} 

\author{Solly Coles} 
\address{Mathematics Institute, University of Warwick,
Coventry CV4 7AL, U.K.}
\email{Solly.Coles@warwick.ac.uk}

\author{Richard Sharp} 
\address{Mathematics Institute, University of Warwick,
Coventry CV4 7AL, U.K.}
\email{R.J.Sharp@warwick.ac.uk}

\thanks{\copyright 2022. This work is licensed under a CC BY license. Solly Coles was
supported by the UK Engineering and Physical Sciences Research Council.}

\keywords{}


\begin{abstract}
This paper concerns connections between dynamical systems, knots and helicity of vector fields.
For a divergence-free vector field on a closed $3$-manifold that generates an Anosov flow,
we show that the helicity of the vector field may be recovered as the limit of appropriately weighted averages of linking numbers of periodic orbits, regarded as knots. This complements a classical result of
Arnold and Vogel that, when the manifold is a real homology $3$-sphere,
the helicity may be obtained as the limit of the normalised linking numbers of typical pairs of long trajectories.
We also obtain results on the asymptotic distribution of weighted averages of null-homologous periodic orbits.
\end{abstract}

\maketitle

%
%
%
%
%
%
%
%
%

\section{Introduction}\label{in}
This paper concerns connections between dynamical systems, knots and helicity of vector fields.
More specifically, for a divergence-free vector field on a closed $3$-manifold that generates an Anosov flow,
we show that the helicity of the vector field may be recovered as the limit of appropriately weighted averages of linking numbers of periodic orbits, regarded as knots. This complements a classical result of
Arnold (whose proof was completed by Vogel) that, when the manifold is a real homology $3$-sphere,
the helicity may be obtained as the limit of the normalised linking numbers of typical pairs of long trajectories.

We will now outline the setting more precisely. For good overviews, 
see the book of Arnold and Khesin
\cite{AK} (which restricts to vector fields in regions of $\mathbb R^3$) and the beautiful 2006 ICM survey by Ghys \cite{ghys-icm}.
Let $M$ be a connected and oriented smooth closed $3$-manifold 
with a volume form $\Omega$.
(We recall that closed
here means compact and without boundary.)
Let $X$ be a divergence-free vector field which generates a flow $X^t : M \to M$.
Then $X^t$ preserves the volume.
Let $i_X$ denote the interior product associated to the vector field $X$. If the $2$-form $i_X\Omega$ is exact then we say that $X$ is {\it null-homologous}. Note that this holds
automatically if $M$ is a real homology $3$-sphere (which is equivalent
to requiring that $H_1(M,\mathbb R) =\{0\}$).

If $X$ is null-homologous we can find a $1$-form $\alpha$ with $i_X \Omega = d\alpha$.
Then the {\it helicity} of $X$ is defined by 
\[
\mathcal{H}(X)= \int_M \alpha \wedge d\alpha
\]
and is independent of the choice of $\alpha$ (see Section \ref{subsec:helicity}).
Helicity was introduced by Woltjer \cite{woltjer}, Moreau \cite{moreau} and Moffat \cite{moffat}
and is an invariant (of volume-preserving diffeomorphisms)
which measures the amount of knottedness of flow orbits.

Now suppose that $M$ is a real homology $3$-sphere.
In this case, an alternative characterisation of $\mathcal H(X)$ was given by 
Arnold \cite{arnold},
with some gaps in the proof completed by Vogel \cite{vogel}.
To formulate this, one needs a set $\Sigma$ of ``short curves'' joining each pair of points in $M$.
In particular, for 
$x \in M$ and $t>0$, $\sigma_t(x) \in \Sigma$ will be a curve from 
$X^t(x)$ to $x$.
Let $m$ be the volume measure associated to $\Omega$
(normalised to be a probability measure). For $x,y \in M$, define
\[
\mathcal A(x,y) := \lim_{s,t \to \infty} \frac{1}{st} \mathrm{lk}(X^{[0,s]}(x) \cup
\sigma_s(x),X^{[0,t]}(y) \cup \sigma_t(y)),
\]
where, for two knots $\gamma,\gamma'$ in $M$, $\mathrm{lk}(\gamma,\gamma') \in \mathbb Q$
denotes their linking number.
(Linking numbers will be defined in Section \ref{section:linking} below.)
The limit exists for $(m \times m)$-almost every $(x,y) \in M \times M$ and we have
\[
\mathcal H(X) = \int \mathcal A(x,y) \, d(m \times m).
\]
Furthermore, if $X^t$ is ergodic with respect to $m$ then
we have
\[
\mathcal H(X) = \mathcal A(x,y) 
\]
for $(m \times m)$-almost every $(x,y) \in M \times M$.

Now suppose that $M$ is a Riemannian manifold and that $X^t : M \to M$ is an Anosov flow (see Section \ref{section:anosov} for the definition)
which is null-homologous. (Examples of null-homologous Anosov flows are
geodesic flows over negatively curved manifolds and, more generally, 
contact Anosov flows \cite{foulon-hasselblatt}.)
Anosov flows are chaotic and have a complicated orbit structure. However,
a well-known feature 
is that one can recover global invariants by averaging periodic orbit data. Our main result will be to show that the helicity $\mathcal H(X)$ may be recovered as a limit of weighted averages
of linking numbers of certain periodic orbits (regarded as knots).
This is very much inspired by results of Contreras \cite{contreras} about the linking of periodic orbits of hyperbolic flows on $S^3$.

Let $\mathcal P$ denote the set of prime periodic orbits for $X^t$ and let $\mathcal P(0)
\subset \mathcal P$ denote those which are trivial in $H_1(M,\mathbb R)$.
For $\gamma \in \mathcal P$, let $\ell(\gamma)$ denote its least period.
 For $T>0$,
let
\[
\mathcal P_{T} = \{\gamma \in \mathcal P
\hbox{ : } T-1<\ell(\gamma) \le T\},
\quad
\mathcal P_{T}(0) = \{\gamma \in \mathcal P(0)
\hbox{ : } T-1<\ell(\gamma) \le T\}.
\]
Since $\mathcal P_T(0)$ and $\mathcal P_{T+1}$ are disjoint, and $\mathcal P_T(0)$ consists of null-homologous orbits, the linking number of a
pair of periodic orbits from these two collections are well-defined. (The choices of intervals in which the least period lies
 is somewhat arbitrary but it is important that each pair consists of two distinct orbits; see the discussion in Remark \ref{remark:formulation} in Section \ref{section:per_orbits_helicity}.)

Periodic orbits will be weighted by the function $\varphi^u : M \to
\mathbb R$ defined by
\[
\varphi^u(x) = -\lim_{\epsilon \to 0} \frac{1}{\epsilon} \log |\det(DX^\epsilon|E_x^u)|,
\]
where $E_x^u$ is the fibre of the unstable bundle at $x$. Then, for $\gamma \in \mathcal P$,
the integral 
\[
\int_\gamma \varphi^u := \int_0^{\ell(\gamma)} \varphi^u(X^t(x)) \, dt
\] 
measures the expansion around $\gamma$. 
(When $M$ is a 3-manifold, the unstable bundle in one-dimensional and it is not necessary to take
a determinant in the definition of $\varphi^u$. However, we have equidistribution results below which are valid in arbitrary dimensions, so we give the general definition.)

We define $\varphi^u$-weighted average linking numbers
over the sets of orbits $\P_{T}(0)$ and $\mathcal P_{T+1}$ by
\[
\mathscr L_{\varphi^u}(T):=\frac{\displaystyle \sum_{\gamma\in \P_{T}(0),\gamma'\in \mathcal P_{T+1}}
\frac{\mathrm{lk}(\gamma,\gamma')}{\ell(\gamma) \ell(\gamma')}
\exp\left(\int_\gamma \varphi^u+\int_{\gamma'} \varphi^u\right)}
{\displaystyle \sum_{\gamma\in \P_{T}(0),\gamma' \in \mathcal P_{T+1}}
\exp\left(\int_\gamma \varphi^u+\int_{\gamma'} \varphi^u\right)}
\]

Our main result is the following.

\begin{theorem}\label{thm:hel-homfull-intro}
Let $X^t : M \to M$ be a null-homologous volume-preserving Anosov flow on a 
closed oriented $3$-manifold.
Then 
$$
\mathcal H(X)
= \lim_{T \to \infty} \mathscr L_{\varphi^{u}}(T).
$$ 
\end{theorem}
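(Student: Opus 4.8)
The plan is to combine an integral representation of the linking number, built from a primitive of $i_X\Omega$, with equidistribution of $\varphi^u$-weighted periodic orbits towards the normalised volume measure $m$.

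\emph{Rewriting the helicity.} Choose $\alpha$ with $d\alpha = i_X\Omega$. Since $\alpha\wedge\Omega=0$ on a $3$-manifold, $i_X(\alpha\wedge\Omega)=0$, whence $\alpha\wedge i_X\Omega=(i_X\alpha)\,\Omega$, so that
\[
\mathcal{H}(X)=\int_M\alpha\wedge d\alpha=\int_M\alpha(X)\,\Omega=\Vol(M)\int_M\alpha(X)\,dm .
\]
Thus the goal is to show $\mathscr{L}_{\varphi^u}(T)$ converges to the $m$-average of the action density $\alpha(X)$ (independence of the choice of $\alpha$ follows by Stokes, since $M$ is closed).

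\emph{The dynamical input.} Because $X^t$ preserves $\Omega$, the normalised volume $m$ is the SRB measure, hence the equilibrium state of the geometric potential $\varphi^u$, with topological pressure $P(\varphi^u)=0$. By Bowen's equidistribution theorem, the $\varphi^u$-weighted averages of the normalised orbital measures $\mu_\gamma:=\ell(\gamma)^{-1}\int_\gamma(\cdot)$ over $\mathcal P_{T+1}$ converge weak$^*$ to $m$; the passage from $\{\ell(\gamma)\le T\}$ to the short window $\{T-1<\ell(\gamma)\le T\}$ is handled by the prime orbit theorem with exponential error term, available from exponential mixing of volume-preserving Anosov flows. For the numerator one also needs the refinement that restricting to the trivial homology class does not change the limiting measure: expanding the indicator of $[\gamma]=0$ as a Fourier integral over the dual of $H_1(M;\mathbb R/\mathbb Z)$ turns the constrained sum into an integral of $\varphi^u$-weighted sums twisted by unitary characters, and a transfer-operator/zeta-function analysis shows the trivial-character term dominates, again with limit $m$ (Katsuda--Sunada, Lalley, Sharp). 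The same analysis controls the ratio of the weighted counts in the numerator and denominator up to polynomial factors. Consequently, the $(\varphi^u\times\varphi^u)$-weighted averages of $\mu_\gamma\times\mu_{\gamma'}$ over $(\gamma,\gamma')\in\mathcal P_T(0)\times\mathcal P_{T+1}$ converge weak$^*$ to $m\times m$.

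\emph{The pairing.} Each $\gamma\in\mathcal P_T(0)$ bounds in $H_1(M;\mathbb R)$, so one may build a ``linking kernel'' $\mathcal K$, a double $1$-form on $M\times M$ smooth off the diagonal, with $\mathrm{lk}(\gamma,\gamma')=\iint_{\gamma\times\gamma'}\mathcal K$ for disjoint cycles at least one of which is null-homologous; equivalently, each such $\gamma$ carries a $1$-form $\psi_\gamma$, smooth away from $\gamma$, with $\mathrm{lk}(\gamma,\gamma')=\int_{\gamma'}\psi_\gamma$ (applicable here since $\mathcal P_T(0)$ and $\mathcal P_{T+1}$ have periods in disjoint intervals, so $\gamma\neq\gamma'$). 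Dividing by $\ell(\gamma)\ell(\gamma')$ expresses $\mathrm{lk}(\gamma,\gamma')/(\ell(\gamma)\ell(\gamma'))$ as the integral of the fixed function $\mathcal K$ against $\mu_\gamma\times\mu_{\gamma'}$. Were $\mathcal K$ continuous on $M\times M$, the equidistribution above would immediately give $\mathscr{L}_{\varphi^u}(T)\to\iint_{M\times M}\mathcal K\,d(m\times m)$, and an unwinding using $d\alpha=i_X\Omega$ (exactly as in $\alpha\wedge i_X\Omega=\alpha(X)\Omega$ above) identifies this with $\int_M\alpha(X)\,dm=\mathcal H(X)$ --- the periodic-orbit analogue of the identity $\mathcal A(x,y)=\mathcal H(X)$, which holds $(m\times m)$-a.e.\ since volume-preserving Anosov flows are ergodic.

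\emph{The main obstacle.} The kernel $\mathcal K$ is singular on the diagonal ($\psi_\gamma$ blows up near $\gamma$), so the limit interchange is not licensed: distinct periodic orbits of an Anosov flow, although never equal, approach one another arbitrarily closely as $T\to\infty$, and a priori the near-diagonal part of the integral, finite for each fixed pair, need not be controlled uniformly. The heart of the proof is a quantitative estimate showing that, after $\varphi^u$-weighted averaging over pairs, the contribution of $\epsilon$-close encounters tends to $0$ as $\epsilon\to0$ slowly with $T$ --- roughly, that the weighted average over $(\gamma,\gamma')$ of the $(s,t)$-time the two orbits spend within distance $\epsilon$ is $o(\ell(\gamma)\ell(\gamma'))$. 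I expect this to come from hyperbolicity, either via exponential mixing and large-deviation bounds for the flow, or by comparison with the almost-everywhere Arnold--Vogel statement through the Anosov closing and shadowing lemmas (periodic orbits shadow typical trajectory segments whose weighted sample points equidistribute to $m$). Reconciling the singular linking kernel with the combinatorics of near-collisions of periodic orbits is where the real work lies; once that estimate is in place, the standard equidistribution inputs close the argument.
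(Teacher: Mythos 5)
Your outline matches the strategy of the paper: rewrite $\mathcal H(X)$ as $\int\Lambda\,d(m\times m)$ for a linking kernel $\Lambda(x,y)=L(x,y)(X(x),X(y))$, feed in $\varphi^u$-weighted equidistribution of $\mathcal P_{T+1}$ to $m$ (Parry) and of the homologically trivial orbits $\mathcal P_T(0)$ to $m$ (the zeta-function/Fourier analysis over $H_1$ that you sketch is exactly what the paper carries out, following Babillot--Ledrappier; note that the constrained limit equals $m$ only because $\Phi_m=0$ for a null-homologous field, which singles out the trivial twist as the minimiser of the pressure function), and then pass the singular kernel through the weak$^*$ limit. Your algebra $\alpha\wedge i_X\Omega=\alpha(X)\,\Omega$ and the identification of $\int\Lambda\,d(m\times m)$ with $\mathcal H(X)$ via the defining property of the linking form are also correct.

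However, the proposal stops short of proving the two steps that constitute the actual content of the argument, and you say so yourself (``where the real work lies''). First, you never establish a quantitative bound on the kernel near the diagonal. On $S^3$ one has the explicit Gauss integrand, but on a general closed oriented $3$-manifold the Kotschick--Vogel linking form is built from the Green kernel of the Hodge Laplacian, and one must extract the estimate $|\Lambda(x,y)|\le K/r(x,y)$ from the de Rham parametrix expansion $g(x,y)=\omega(x,y)+O(r)$; without some such bound the singularity could a priori be worse than integrable against the limiting measures. Second, and more seriously, the uniform control of the near-diagonal mass of the \emph{orbital} measures is not supplied. The paper obtains it from a Gibbs-type upper bound $\mu_{\psi,T}(B(x,\delta,L))\le C_\delta\exp(\int_0^L\psi(X^tx)\,dt-P(\psi)L)$ for the weighted orbital measures (Franco's argument, rebuilt for the weighted sums over $\mathcal P_{T+1}$), combined with the fact that $\psi-P(\psi)$ is cohomologous to a function bounded above by $-\epsilon<0$, yielding $\mu_{\psi,T}(B(x,\rho))\lesssim\rho^{1+\alpha}$ with $\alpha>0$ uniformly in $T$; a dyadic annulus decomposition then bounds the integral of $1/r$ over $B(x,R)\setminus B(x,\delta/2\lambda^T)$ by $O(R^\alpha)$, and expansivity kills the innermost region because two distinct periodic orbits with periods in disjoint windows $(T-1,T]$ and $(T,T+1]$ cannot remain within the expansivity constant of each other. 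Neither exponential mixing nor the Anosov shadowing route you gesture at delivers this uniformly-in-$T$ local dimension bound for the orbital measures without essentially reconstructing the Gibbs estimate, so the limit interchange remains unjustified as written. Until that estimate is proved, the proposal is a correct plan rather than a proof.
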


This is restated and proved as Theorem \ref{thm:hel-homfull} below.
If $M$ is a real homology $3$-sphere then all periodic orbits are null-homologous
and so we can replace the limit in Theorem \ref{thm:hel-homfull-intro} with a limit
of averages over all periodic orbits, as follows.

\begin{corollary}\label{cor:rh3s-intro}
Let $M$ be a real homology $3$-sphere and let $X^t : M \to M$
be a volume-preserving Anosov flow. Then
\[
\mathcal H(X)
=
\lim_{T \to \infty}
\frac{\displaystyle \sum_{\gamma\in \P_{T},\gamma'\in \mathcal P_{T+1}}
\frac{\mathrm{lk}(\gamma,\gamma')}{\ell(\gamma) \ell(\gamma')}
\exp\left(\int_\gamma \varphi^u+\int_{\gamma'} \varphi^u\right)}
{\displaystyle \sum_{\gamma\in \P_{T},\gamma' \in \mathcal P_{T+1}}
\exp\left(\int_\gamma \varphi^u+\int_{\gamma'} \varphi^u\right)}.
\]
\end{corollary}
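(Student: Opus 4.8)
\section*{Proof proposal for Corollary \ref{cor:rh3s-intro}}

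The plan is to deduce Corollary \ref{cor:rh3s-intro} directly from Theorem \ref{thm:hel-homfull-intro}, the only work being to check that the hypotheses of that theorem hold on a real homology $3$-sphere and that the two weighted averages coincide term by term. First I would verify that $X$ is null-homologous. Since $X$ is volume-preserving (which is needed already for $\mathcal H(X)$ to be defined), the $2$-form $i_X\Omega$ is closed; and since $M$ is a real homology $3$-sphere, Poincar\'e duality gives $H^2(M,\mathbb R)\cong H_1(M,\mathbb R)=\{0\}$, so $i_X\Omega$ is exact. Hence $X$ is a null-homologous volume-preserving Anosov flow on a closed oriented $3$-manifold, and Theorem \ref{thm:hel-homfull-intro} applies, giving $\mathcal H(X)=\lim_{T\to\infty}\mathscr L_{\varphi^u}(T)$.

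It then remains to identify $\mathscr L_{\varphi^u}(T)$ with the ratio in the statement of the corollary. Because $H_1(M,\mathbb R)=\{0\}$, every periodic orbit is trivial in $H_1(M,\mathbb R)$, so $\mathcal P(0)=\mathcal P$ and therefore $\mathcal P_T(0)=\mathcal P_T$ for all $T$. Substituting this identity into the definition of $\mathscr L_{\varphi^u}(T)$ turns it verbatim into the expression appearing in Corollary \ref{cor:rh3s-intro}. (All terms make sense: distinct prime periodic orbits are disjoint embedded circles; the period intervals $(T-1,T]$ and $(T,T+1]$ are disjoint, so $\gamma\neq\gamma'$ whenever $\gamma\in\mathcal P_T$ and $\gamma'\in\mathcal P_{T+1}$; and every orbit is null-homologous, so each linking number $\mathrm{lk}(\gamma,\gamma')$ is defined.) Taking $T\to\infty$ and invoking Theorem \ref{thm:hel-homfull-intro} completes the proof.

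I do not expect a genuine obstacle here: all the analytic content --- the equidistribution of $\varphi^u$-weighted periodic orbits and the convergence of the averaged linking numbers to $\int\mathcal A\,d(m\times m)=\mathcal H(X)$ --- is already contained in Theorem \ref{thm:hel-homfull-intro}. The only points requiring a moment's care are the homological bookkeeping ($\mathcal P_T(0)=\mathcal P_T$) and the automatic verification, via divergence-freeness and Poincar\'e duality, that the flow is null-homologous; both are handled by the elementary remarks above.
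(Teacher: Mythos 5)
Your proposal is correct and matches the paper's own (essentially one-line) deduction: since $H_1(M,\mathbb R)=\{0\}$ every periodic orbit is null-homologous, so $\mathcal P_T(0)=\mathcal P_T$ and the corollary is just Theorem \ref{thm:hel-homfull-intro} with the restricted sums replaced by unrestricted ones. Your additional check that $X$ is automatically null-homologous (closedness of $i_X\Omega$ from divergence-freeness plus $H^2(M,\mathbb R)=0$ by Poincar\'e duality) is the same observation the paper makes via the vanishing of the winding cycle $\Phi_m$.
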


A particular case of a real homology 3-spheres is provided by the unit tangent bundle
of a genus zero hyperbolic orbifold, and the geodesic flow is Anosov. In this case, $\varphi^u$ is 
a constant, $\varphi^u=-1$, and so, by Corollary \ref{cor:rh3s-intro}, we 
may obtain the helicity in the limit by weighting each closed geodesic $\gamma$ 
by $e^{-\ell(\gamma)}$. However, we 
may obtain the following unweighted result, proved as Theorem 
\ref{thm:genus-zero-orbifold} in Section 8.

\begin{theorem}\label{thm:genus-zero-orbifold-intro}
Let $X^t : M \to M$
be the geodesic flow over a genus zero hyperbolic orbifold. Then
\[
\mathcal H(X)
=
\lim_{T \to \infty}
\frac{1}{\#\mathcal P_T \ \#\mathcal P_{T+1}}
\sum_{\gamma\in \P_{T},\gamma'\in \mathcal P_{T+1}}
\frac{\mathrm{lk}(\gamma,\gamma')}{\ell(\gamma) \ell(\gamma')}.
\]
\end{theorem}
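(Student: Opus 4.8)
The plan is to deduce Theorem~\ref{thm:genus-zero-orbifold-intro} from Corollary~\ref{cor:rh3s-intro} by removing the $\varphi^u$-weighting, which for the geodesic flow over a genus zero hyperbolic orbifold amounts to replacing the weights $e^{-\ell(\gamma)}e^{-\ell(\gamma')}$ (since $\varphi^u \equiv -1$ forces $\int_\gamma \varphi^u = -\ell(\gamma)$) by the uniform counting weights $1/(\#\mathcal P_T \,\#\mathcal P_{T+1})$. The key point is that, by Corollary~\ref{cor:rh3s-intro},
\[
\mathcal H(X) = \lim_{T \to \infty} \frac{\displaystyle\sum_{\gamma \in \mathcal P_T,\,\gamma' \in \mathcal P_{T+1}} \frac{\mathrm{lk}(\gamma,\gamma')}{\ell(\gamma)\ell(\gamma')}\, e^{-\ell(\gamma)}e^{-\ell(\gamma')}}{\displaystyle\sum_{\gamma \in \mathcal P_T,\,\gamma' \in \mathcal P_{T+1}} e^{-\ell(\gamma)}e^{-\ell(\gamma')}},
\]
and since every $\gamma \in \mathcal P_T$ has $T-1 < \ell(\gamma) \le T$, the factor $e^{-\ell(\gamma)}$ lies in the narrow band $(e^{-T}, e^{-(T-1)}]$; the same holds for $\gamma' \in \mathcal P_{T+1}$ with band $(e^{-(T+1)}, e^{-T}]$. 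Thus each weight $e^{-\ell(\gamma)}e^{-\ell(\gamma')}$ differs from the constant $e^{-(2T+1)}$ by a bounded multiplicative factor, but to get equality of limits we need finer control than "bounded".

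The main obstacle, then, is that a crude boundedness estimate only shows that the weighted and unweighted averages differ by a bounded factor, not that they have the same limit. To overcome this I would invoke the equidistribution machinery already developed earlier in the paper (the same tools behind Corollary~\ref{cor:rh3s-intro}), in particular a prime orbit theorem with exponential error term for $\#\mathcal P_T$ together with the fact that the length spectrum of a hyperbolic surface/orbifold does not concentrate: more precisely, for the geodesic flow the measure $\frac{1}{\#\mathcal P_T}\sum_{\gamma \in \mathcal P_T}\delta_{\ell(\gamma) - (T-1)}$ on $(0,1]$ converges (after suitable interpretation via the exponential growth $\#\{\gamma : \ell(\gamma) \le T\} \sim e^{hT}/(hT)$ with $h = 1$ here) to the measure proportional to $e^{h s}\,ds = e^{s}\,ds$. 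Consequently the weight $e^{-\ell(\gamma)}$ is, up to the normalising constant, asymptotically distributed in a way that exactly compensates the non-uniformity of the counting, and one can write
\[
\sum_{\gamma \in \mathcal P_T} e^{-\ell(\gamma)} f(\gamma) \ \sim\ c_T \cdot \frac{1}{\#\mathcal P_T}\sum_{\gamma \in \mathcal P_T} f(\gamma)
\]
for a constant $c_T$ independent of the test quantity $f(\gamma) = \mathrm{lk}(\gamma,\gamma')/(\ell(\gamma)\ell(\gamma'))$ summed against $\gamma'$, provided the relevant quantities are suitably equidistributed with respect to length within the band.

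Concretely, the steps I would carry out are: (1) record that $\varphi^u \equiv -1$ for the geodesic flow over a hyperbolic orbifold (the unstable horocycle flow expands at unit rate), so $\exp(\int_\gamma \varphi^u) = e^{-\ell(\gamma)}$; (2) apply Corollary~\ref{cor:rh3s-intro} verbatim to reduce to comparing the $e^{-\ell}$-weighted average with the counting average; (3) split both numerator and denominator according to the length bands and use the prime orbit theorem / equidistribution-in-length results (cited from the earlier sections, or from Parry--Pollicott type asymptotics for $\mathbb{Z}$-extensions and the genus zero homology analysis in Section~8) to show that in each the weight $e^{-\ell(\gamma)-\ell(\gamma')}$ may be replaced by its mean value over the band with an error that is $o(1)$ relative to the main term; (4) conclude that the $e^{-\ell}$-weighted ratio and the uniformly-weighted ratio have the same limit, namely $\mathcal H(X)$. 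I expect step (3) to be where the real work lies, since it requires knowing not just the leading asymptotics of $\#\mathcal P_T$ but also that the linking-number sum $\sum_{\gamma' \in \mathcal P_{T+1}} \mathrm{lk}(\gamma,\gamma')/(\ell(\gamma)\ell(\gamma'))$, viewed as a function of $\gamma$, is asymptotically independent (in an averaged sense) of where $\ell(\gamma)$ sits inside its length band — a decorrelation statement that should follow from the mixing/equidistribution results underpinning Theorem~\ref{thm:hel-homfull-intro} but must be stated carefully.
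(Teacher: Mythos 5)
Your overall strategy---deducing the unweighted statement from the weighted Corollary \ref{cor:rh3s-intro} by stripping out the factors $e^{-\ell(\gamma)}$---is not the route the paper takes, and as written it has a genuine gap at your step (3). Knowing that each weight lies in a band of bounded ratio only gives comparability of the two averages, as you yourself note; to upgrade this to equality of limits you need that the linking sum $\sum_{\gamma'\in\mathcal P_{T+1}}\mathrm{lk}(\gamma,\gamma')/(\ell(\gamma)\ell(\gamma'))$ is asymptotically uncorrelated with the position of $\ell(\gamma)$ inside $(T-1,T]$. The equidistribution-in-length asymptotic you quote (the $e^{hs}\,ds$ profile of lengths within the band, which does follow from the remark after Lemma \ref{lem:growth_of_weighted_per_orbits}) concerns only the marginal distribution of lengths and says nothing about this joint behaviour; the decorrelation you need is nowhere established in the paper and does not follow from Theorem \ref{weighted-nonnull-equi} without further argument --- one would essentially have to re-run the entire linking analysis on a partition of the band into short subintervals on which $e^{-\ell(\gamma)}$ is nearly constant.

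The paper's proof sidesteps this entirely. Theorem \ref{thm:main-homfull} is stated for an arbitrary H\"older weight $\varphi$, so one simply takes $\varphi=0$: since the unit tangent bundle of a genus zero hyperbolic orbifold is a real homology $3$-sphere, $\mathcal P_T(0)=\mathcal P_T$ and $f_{\xi(0)}=0$, and $\mathscr L_0(T)$ is exactly the uniformly weighted average in the statement. The theorem then gives $\lim_{T\to\infty}\mathscr L_0(T)=\int\Lambda\,d(\mu_0\times\mu_0)$, where $\mu_0$ is the measure of maximal entropy. The one geometric input is that for these constant-curvature geodesic flows $\varphi^u\equiv-1$ is constant, so $\mu_0=\mu_{\varphi^u}=m$, and $\int\Lambda\,d(m\times m)=\mathcal H(X)$ as in the proof of Theorem \ref{thm:hel-homfull}. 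In short, the fact you should be exploiting is not that the weights $e^{-\ell(\gamma)}$ can be averaged away, but that the unweighted equidistribution already selects the volume measure in this setting.
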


In order to prove these results, we need to understand the limiting behaviour of the weighted
orbital measures
\[
\left(\sum_{\gamma\in \P_{T}}
e^{\int_\gamma \varphi^u}\right)^{-1}
\sum_{\gamma\in \P_{T}}
e^{ \int_\gamma \varphi^u} \mu_\gamma
\]
and
\[
\left(\sum_{\gamma\in \P_{T}(0)}
e^{\int_\gamma \varphi^u}\right)^{-1}
\sum_{\gamma\in \P_{T}(0)}
e^{ \int_\gamma \varphi^u} \mu_\gamma,
\]
as $T \to \infty$, where $\mu_\gamma$ is defined by
\[
\int \psi \, d\mu_\gamma := \frac{1}{\ell(\gamma)} \int_\gamma \psi.
\]
By a result of Parry \cite{parry-cmp}, the first family of measures converges to the volume measure
$m$. (This is discussed further in Section \ref{section:equi_per_orbits}.)
The situation for null-homologous periodic orbits (or periodic orbits restricted to lie in any prescribed
homology class) is more delicate, and we believe of independent interest. The unweighted version (where $\varphi^u$ is replaced by zero)
was studied by Katsuda and Sunada \cite{KS} and Sharp \cite{Sh93}. Here we consider for the first time the weighted version. 
The next theorem gives the result we need to prove
Theorem \ref{thm:hel-homfull-intro}, where the weighting is by $\varphi^u$ and the limiting
measure is the volume.

\begin{theorem}\label{weighted-equi-nullhom-intro}
Let $X^t : M \to M$ be a null-homologous volume-preserving Anosov flow on a 
closed oriented $3$-manifold. Then the measures
\[
\left(\sum_{\gamma\in \P_{T}(0)}
e^{\int_\gamma \varphi^u}\right)^{-1}
\sum_{\gamma\in \P_{T}(0)}
e^{ \int_\gamma \varphi^u} \mu_\gamma,
\]
converge to $m$ in the weak$^*$ topology, as $T \to \infty$.
\end{theorem}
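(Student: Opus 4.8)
The plan is to reduce this weighted equidistribution statement to a symbolic-dynamics computation via a Markov coding of the Anosov flow, and then to extract the required asymptotics using a Tauberian theorem applied to a suitable transfer operator twisted by a character on $H_1(M,\mathbb{Z})$. First I would set up the thermodynamic formalism: the weighting $\varphi^u$ is (up to a coboundary) the geometric potential, and the equilibrium state $\mu_{-\varphi^u}$ is precisely the volume measure $m$ by the standard characterization of SRB/volume measures for volume-preserving Anosov flows (this is exactly what underlies Parry's result quoted in the excerpt for $\mathcal{P}_T$). So the claim is that restricting the weighted orbital sum to null-homologous orbits does not change the limiting measure. For a test function $\psi \in C(M)$, I would consider the ratio
\[
\frac{\sum_{\gamma \in \mathcal{P}_T(0)} e^{\int_\gamma \varphi^u} \int \psi \, d\mu_\gamma}{\sum_{\gamma \in \mathcal{P}_T(0)} e^{\int_\gamma \varphi^u}}
\]
and show numerator and denominator both behave, to leading order, as the corresponding sums over all of $\mathcal{P}_T$ times a common constant depending only on $b_1 = \dim H_1(M,\mathbb{R})$.

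The key device is the Fourier/character trick: writing the indicator of the trivial homology class as an integral over the character group, the condition $\gamma \in \mathcal{P}(0)$ is detected by
\[
\mathbbm{1}[\,[\gamma]=0\,] = \int_{\mathbb{T}^{b_1}} e^{2\pi i \langle \xi, [\gamma]\rangle} \, d\xi,
\]
where $[\gamma] \in H_1(M,\mathbb{Z})/\text{torsion}$. This turns the restricted sum into an integral over $\xi$ of sums weighted by $e^{\int_\gamma \varphi^u + 2\pi i \langle \xi, [\gamma]\rangle}$, which via the Markov coding become traces of a transfer operator $\mathcal{L}_{f + i\langle \xi, \cdot\rangle}$, where $f$ is the roof-normalized symbolic version of $\varphi^u$ and the homology class is realized as a locally constant $\mathbb{R}^{b_1}$-valued function on the shift (Sinai/Bowen). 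The leading eigenvalue of this operator is $e^{P(\xi)}$ with $P(0) = 0$ (since $\varphi^u$-weighting already gives pressure zero after the normalization matching the volume), and standard analysis gives that $P(\xi)$ has a nondegenerate maximum at $\xi = 0$ with negative-definite Hessian — this is where one uses that the flow is weak-mixing and that $\varphi^u$ (equivalently $m$) is not cohomologous to something making homology a coboundary, i.e. the homology class genuinely fluctuates. The contribution to the counting of null-homologous orbits then comes from a Laplace-type expansion near $\xi = 0$: both numerator and denominator acquire the same factor $C \cdot T^{-b_1/2} \cdot (\text{leading exponential})$, and the ratio converges to $\int \psi \, dm$.

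Making this rigorous requires a prime-orbit-theorem-with-error-term for the twisted counting, which I would obtain from the meromorphic properties of the twisted dynamical zeta function $\zeta(s,\xi) = \prod_\gamma (1 - e^{-s\ell(\gamma) + \int_\gamma \varphi^u + 2\pi i \langle\xi,[\gamma]\rangle})^{-1}$: for $\xi$ near $0$ it has a simple pole at $s = P(\xi)$ and is otherwise analytic in a neighborhood, uniformly in $\xi$, so a Tauberian argument (Ikehara, or the contour-shift method of Pollicott–Sharp, handling the boundary behavior via the spectral gap of the transfer operator) gives the short-interval count $\sum_{\gamma \in \mathcal{P}_T, [\gamma]=0} e^{\int_\gamma \varphi^u} \sim \text{const} \cdot T^{-b_1/2} e^{hT}/T$ or the appropriate analogue after including $\psi$. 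The main obstacle I anticipate is the uniformity in $\xi$ of the analytic continuation and the non-stationary-phase estimate away from $\xi = 0$: one must rule out other values of $\xi \in \mathbb{T}^{b_1}$ where $P(\xi)$ attains the maximum $0$, which amounts to showing the subgroup of $H_1(M,\mathbb{Z})$ generated by periodic orbits is all of $H_1$ and that there is no lattice obstruction — this follows from the topological transitivity of the Anosov flow and is exactly the point handled by Katsuda–Sunada and Sharp in the unweighted case, so the task is to check their argument is insensitive to replacing the zero potential by $\varphi^u$ (which it is, since $\varphi^u$ is Hölder and the spectral theory is identical). Finally, weak\,$^*$ convergence on all of $C(M)$ follows from convergence on a dense set (e.g. the Hölder functions, or functions pulled back through the symbolic coding), together with the uniform mass normalization.
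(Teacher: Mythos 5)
Your proposal is correct and follows essentially the same route as the paper: character orthogonality on $H_1(M,\mathbb{Z})/\mathrm{Tor}$, twisted zeta functions/transfer operators, a saddle-point expansion at the trivial character producing matching $T^{-b/2}$ factors, and a large-deviations or dense-subalgebra argument to upgrade to weak$^*$ convergence (the paper does this via Babillot--Ledrappier for the asymptotics and Kifer-style large deviations for equidistribution). The only point you should make explicit is that restricting to purely imaginary twists and localising at $\xi=0$ requires the first-order term $\nabla P(0)=\Phi_{\mu_{\varphi^u}}=\Phi_m$ to vanish --- this is exactly where the null-homologous hypothesis enters (in the paper's language, it forces the minimising cohomology class $\xi(\varphi^u)$ to be $0$, so the limit is $\mu_{\varphi^u}=m$ rather than a shifted equilibrium state).
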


More generally, we can handle any H\"older continuous weighting $\varphi : M
\to \mathbb R$ and Anosov flows in arbitrary dimensions. 
We have the following result, which we restate more precisely later as
Theorem \ref{weighted-equi}.

\begin{theorem} \label{weighted-equi-intro}
Let $X^t : M \to M$ be a homologically full transitive Anosov flow. 
Let $\varphi : M \to \mathbb R$ be H\"older continuous.
Then the measures
\[
\left(\sum_{\gamma\in \P_{T}(0)}
e^{\int_\gamma \varphi}\right)^{-1}
\sum_{\gamma\in \P_{T}(0)}
e^{ \int_\gamma \varphi} \mu_\gamma,
\]
converge in the weak$^*$ topology, as $T \to \infty$,
and the limiting measure is characterised by a variational principle.
\end{theorem}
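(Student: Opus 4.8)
The plan is to encode everything symbolically via a Markov coding and then apply the machinery of the thermodynamic formalism together with a local central limit theorem in the homology direction. First I would fix a Markov section for the Anosov flow $X^t$, reducing the counting of periodic orbits to counting periodic orbits of a suspension flow over a subshift of finite type $(\Sigma_A, \sigma)$ with a H\"older roof function $r$. Under this coding, a periodic $\sigma$-orbit of length $n$ corresponds (up to the standard orbit-counting bookkeeping) to a periodic $X$-orbit $\gamma$ with $\ell(\gamma) = r^n(x) := \sum_{i=0}^{n-1} r(\sigma^i x)$, the weight $\int_\gamma \varphi$ becomes a Birkhoff sum $(\widetilde\varphi)^n(x)$ of the H\"older function $\widetilde\varphi$ obtained by integrating $\varphi$ along the flow, and the homology class of $\gamma$ becomes a Birkhoff sum $F^n(x)$ of a H\"older function $F : \Sigma_A \to H_1(M,\mathbb{Z})/\text{torsion} \cong \mathbb{Z}^b$. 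Homological fullness of the flow guarantees (by the results of Katsuda--Sunada and Sharp) that $F$ is not cohomologous to a function taking values in a proper affine sublattice, so the relevant pressure function is strictly convex and a local central limit theorem is available.

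The second step is to introduce the two-variable pressure. For $\xi \in \mathbb{R}^b$ let $P(\xi) := P\big(\widetilde\varphi - s_\varphi r + \langle \xi, F\rangle\big)$, where $s_\varphi$ is chosen so that $P(0) = 0$; here $P$ denotes topological pressure for the shift. Then the weighted sums over all periodic orbits of length in $(T-1,T]$ are governed, after a Laplace/Fourier transform in the period and a Fourier transform in homology, by the behaviour of $(I - \mathcal{L}_{z,\xi})^{-1}$ near the leading eigenvalue, where $\mathcal{L}_{z,\xi}$ is the Ruelle transfer operator for $\widetilde\varphi - z r + i\langle \xi, F\rangle$. Restricting to the null-homologous orbits amounts to integrating $e^{i\langle \xi, F^n(x)\rangle}$ over $\xi$ in the torus $[-\pi,\pi]^b$ (or over $\mathbb{R}^b$ after rescaling), which by the local CLT extracts a factor $\sim c\,T^{-b/2}$ from both numerator and denominator. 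Crucially this polynomial factor is identical in the two sums, so it cancels in the ratio, and what survives is exactly the weighted equidistribution statement for the full orbit-counting problem: the measures converge to the equilibrium state $\mu_\varphi$ for the potential determined by $\widetilde\varphi - s_\varphi r$ (suitably normalized and pushed back to $M$), and this is the measure characterized by the variational principle $\sup\{h(\nu) + \int \varphi\,d\nu - s_\varphi : \nu \text{ flow-invariant}\}$, i.e. the equilibrium state of $\varphi$ relative to the flow.

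To make the cancellation rigorous I would test against a fixed H\"older observable $\psi : M \to \mathbb{R}$ and show that the ratio of $\psi$-weighted sums to unweighted sums (both over $\mathcal{P}_T(0)$) converges to $\int \psi\, d\mu_\varphi$. The standard route is to differentiate: replace $\varphi$ by $\varphi + t\psi$, track how the leading pressure eigenvalue $s_{\varphi + t\psi}$ and leading eigenfunction/eigenmeasure of $\mathcal{L}$ vary analytically in $t$ (perturbation theory of the transfer operator, as in Parry--Pollicott), and read off $\frac{d}{dt}\big|_{t=0}$ of $\log(\text{partition function})$ as $\int \psi\, d\mu_\varphi$. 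The asymptotics in $T$ come from a Tauberian/contour-integration argument applied to the Laplace transform $\sum_\gamma e^{\int_\gamma(\varphi + t\psi)} e^{-z\ell(\gamma)}$, whose singularity structure near $z = s_{\varphi+t\psi}$ is controlled by the spectral gap of $\mathcal{L}$; the homological restriction is handled by the Fourier integral over $\xi$ as above, and one checks the error terms are uniformly $o(1)$ in the relevant range.

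The main obstacle is the local central limit theorem in the homology variable with the \emph{weighting} present: one must show that the transfer operator $\mathcal{L}_{z,\xi}$ has no eigenvalue of maximal modulus for $\xi \in [-\pi,\pi]^b \setminus \{0\}$ when the potential is the weighted one $\widetilde\varphi - s_\varphi r$, i.e. that $F$ is not cohomologous (via a $\mathbb{Z}^b$-valued coboundary) to a lattice-valued function — and that this non-degeneracy, which is exactly homological fullness, persists after twisting by $\widetilde\varphi$. This is the point where the hypothesis is used essentially; once the relevant aperiodicity and the strict positive-definiteness of the Hessian $D^2 P(0)$ (the asymptotic covariance of $F$) are established, the rest is a by-now-standard, if lengthy, application of the thermodynamic formalism, and the polynomial factors cancel to leave the claimed weak$^*$ limit characterized by its variational principle.
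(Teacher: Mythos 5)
There is a genuine gap, and it concerns the identification of the limit measure. You treat the restriction to null-homologous orbits as a pure local-CLT event: you integrate $e^{i\langle \xi, F^n\rangle}$ over the torus around $\xi=0$ and claim this extracts the same polynomial factor $\sim c\,T^{-b/2}$ from numerator and denominator, so that the limit is the \emph{unconstrained} equilibrium state $\mu_\varphi$. But the local CLT centred at $0$ only applies when the drift $\int F\,d\mu_\varphi$ --- i.e.\ the winding cycle $\Phi_{\mu_\varphi}$ --- vanishes, and for a general H\"older $\varphi$ it does not. When $\Phi_{\mu_\varphi}\ne 0$, conditioning on $[\gamma]=0$ is a large-deviations event for the measure $\mu_\varphi$, and the stationary-phase/saddle-point analysis must be carried out at a shifted (imaginary) frequency: one has to tilt the potential by $f_{\xi(\varphi)}=\xi(\varphi)(X)$, where $\xi(\varphi)\in H^1(M,\mathbb R)$ is the unique minimiser of the strictly convex pressure function $[\omega]\mapsto P(\varphi+f_{[\omega]})$. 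The correct limit is the equilibrium state $\mu_{\varphi+f_{\xi(\varphi)}}$, characterised by the \emph{constrained} variational principle $\sup\{h(\nu)+\int\varphi\,d\nu : \nu\in\mathcal M(X),\ \Phi_\nu=0\}$, not by the unconstrained one you state. (These coincide only in the special case $\Phi_{\mu_\varphi}=0$, e.g.\ $\varphi=\varphi^u$ for a null-homologous volume-preserving flow, which is precisely the case needed for the helicity application --- but the theorem as stated is for arbitrary H\"older $\varphi$.)

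Apart from this misplaced saddle point, your framework (Markov coding, transfer operators twisted by the homology cocycle $F$, aperiodicity from homological fullness, strict convexity of the multi-variable pressure) is the same Babillot--Ledrappier/Lalley machinery the paper uses to obtain the asymptotics $\pi_\varphi(T,\alpha,g)\sim C e^{-\langle\alpha,\xi(\varphi)\rangle}e^{\beta T}/T^{1+b/2}$ with $\beta=P(\varphi+f_{\xi(\varphi)})$. Note also that the paper does not pass from the counting asymptotics to equidistribution by differentiating the pressure in an auxiliary parameter $t$; it only needs the logarithmic growth rate $\tfrac1T\log\pi_\varphi(T,0,\mathbbm{1}_{[a,b]})\to\beta$, which it feeds into a Kifer-style large deviations upper bound showing that orbits whose orbital measures stay away from $\mu_{\varphi+f_{\xi(\varphi)}}$ carry exponentially small relative weight. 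Your differentiation route could be made to work, but only after the constraint $\Phi_\nu=0$ is built in --- the envelope computation then returns $\int\psi\,d\mu_{\varphi+f_{\xi(\varphi)}}$, not $\int\psi\,d\mu_\varphi$. To repair the proposal you need to introduce the cohomological tilt $\xi(\varphi)$ at the outset and verify $\nabla P=0$ there before invoking any local limit theorem.
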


We end the introduction by outlining the structure of the paper. In Section \ref{section:anosov},
we define Anosov flows and introduce winding cycles and homologically full flows.
In Section \ref{section:pressure}, we recall some notions from thermodynamic formalism: entropy, pressure and the
special invariant measures known as equilibrium states.
In Section \ref{section:equi_per_orbits}, we discuss weighted equidistribution results for orbital measures (with no restriction to a homology class).
In Section \ref{section:therm-yoga}, we introduce a pressure function on cohomology,
which enables us to pick out a unique cohomology class associated to the weighting function
and hence a particular equilibrium state which gives the limit in Theorem \ref{weighted-equi-intro}.
In Section \ref{section:equi-null}, we use the approach of \cite{bab-led} to establish equidistribution of weighted null-homologous periodic orbits to this equilibrium state.
In the second part of the paper, starting with Section \ref{section:linking}, we apply our
equidistribution results to prove Theorem \ref{thm:hel-homfull-intro} and related results.
In Section \ref{section:linking}, we recall standard material about vector fields, forms, linking numbers
of knots, linking forms and helicity.
In Section \ref{section:per_orbits_helicity}, we relate weighted averages of linking numbers to 
integral involving a linking form and prove Theorem \ref{thm:hel-homfull-intro} (restated as Theorem \ref{thm:hel-homfull})
subject to a more general result, Theorem \ref{thm:main-homfull}.
In Section \ref{section:proof_of_main}, we obtain bounds on the linking form, which, combined
with the arguments of Contreras \cite{contreras}, enable us
to prove Theorem \ref{thm:main-homfull}.

\section{Anosov flows and homology}\label{section:anosov}
Let $M$ be a smooth closed Riemannian manifold (of arbitrary dimension) and
let $X^t:M\to M$ be an Anosov flow generated by the vector field $X$.
This means that the tangent bundle has a continuous $DX^t$-invariant splitting $TM = E^0 \oplus E^s \oplus E^u$,
where $E^0$ is the one-dimensional bundle spanned by $X$ and where there exist constants $C,\lambda >0$ such that
\begin{enumerate}
\item
$\|DX^tv\| \le Ce^{-\lambda t} \|v\|$, for all $v \in E^s$ and $t >0$;
\item
$\|DX^{-t}v\| \le Ce^{-\lambda t} \|v\|$, for all $v \in E^u$ and $t>0$.
\end{enumerate}
This class of flows was introduced by Anosov \cite{anosov}; for a good modern reference
see \cite{fisher}.
In addition, we assume that $X^t : M \to M$ is topologically transitive, i.e. that there is a dense 
orbit. 

We say that the flow is topologically weak mixing if the equation 
$\psi \circ X^t = e^{iat} \psi$, for $\psi : M \to \mathbb C$ continuous and $a \in \mathbb R$, only
has the trivial solution where $\psi$ is constant and $a=0$.
A classical result of Plante \cite{plante} is that a transitive Anosov flows fails to be weak-mixing if and 
only if it is the constant time suspension of an Anosov diffeomorphism.
We also have that $X^t$ fails to be weak-mixing if and only if
$\{\ell(\gamma) \hbox{ : } \gamma \in \mathcal P\}$ is contained in a discrete subgroup of $\mathbb R$, where $\mathcal P$ is the set of prime periodic orbits defined in the introduction and 
$\ell(\gamma)$ is the least period of $\gamma \in \mathcal P$.

We wish to consider the real homology group $H_1(M,\mathbb R)\cong \mathbb R^b$, where
$b \ge 0$ is the first Betti number of $M$.
For $\gamma \in \mathcal P$, we write $[\gamma]$ for the corresponding real homology class
in $H_1(M,\mathbb R)$. (Note that $[\gamma]$ may be identified with the torsion free part of the
integral homology class of $\gamma$ in $H_1(M,\mathbb Z) \cong \mathbb Z^b \oplus \mathrm{Tor}$,
where $\mathrm{Tor}$ is a finite abelian group.)

We will need the following result.

\begin{proposition}[Parry and Pollicott \cite{PP86}]\label{prop:periodic_orbits_generate_homology}
Let $X^t : M \to M$ be a transitive Anosov flow. Then the set of integral homology classes of 
periodic orbits generates $H_1(M,\mathbb Z)$.
\end{proposition}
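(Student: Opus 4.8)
The plan is a proof by contradiction via finite cyclic covers. Write $\Gamma \le H_1(M;\mathbb{Z})$ for the subgroup generated by the integral homology classes $[\gamma]$, $\gamma \in \mathcal{P}$, and suppose $\Gamma \ne H_1(M;\mathbb{Z})$. Since $H_1(M;\mathbb{Z})$ is finitely generated, $H_1(M;\mathbb{Z})/\Gamma$ is a non-trivial finitely generated abelian group and hence admits a surjection onto $\mathbb{Z}/p\mathbb{Z}$ for some prime $p$; composing, we obtain a surjection $\phi : H_1(M;\mathbb{Z}) \to \mathbb{Z}/p\mathbb{Z}$ with $\phi([\gamma]) = 0$ for all $\gamma \in \mathcal{P}$. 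Composing $\phi$ with the Hurewicz homomorphism $\pi_1(M) \to H_1(M;\mathbb{Z})$ and taking the kernel produces a normal subgroup of $\pi_1(M)$ of index $p$, hence (as $M$ is connected) a connected regular $p$-fold cover $\pi : \widetilde{M} \to M$ with deck group $\mathbb{Z}/p\mathbb{Z}$; and the condition $\phi([\gamma]) = 0$ says precisely that every periodic orbit of $X^t$ has trivial Frobenius for this cover, i.e.\ lifts to $p$ distinct periodic orbits of the lifted flow, each of the same least period as $\gamma$.

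The next step is to show that the lifted flow $\widetilde{X}^t : \widetilde{M} \to \widetilde{M}$ is itself a transitive Anosov flow. It is Anosov, since the hyperbolic splitting lifts. To see $\Omega(\widetilde{X}^t) = \widetilde{M}$: transitivity of $X^t$ forces $\Omega(X^t) = M$, and by the Anosov closing lemma the periodic orbits of $X^t$ are dense in $M$; lifting periodic orbits through evenly covered neighbourhoods then shows that the periodic points of $\widetilde{X}^t$ are dense in $\widetilde{M}$, so $\Omega(\widetilde{X}^t) = \widetilde{M}$. Thus $\widetilde{X}^t$ is an Axiom A flow whose non-wandering set is the whole manifold, so its spectral decomposition writes $\widetilde{M}$ as a finite disjoint union of closed invariant basic sets; each is then clopen, and since $\widetilde{M}$ is connected there is exactly one, i.e.\ $\widetilde{X}^t$ is topologically transitive.

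The contradiction then comes from counting periodic orbits. By the previous paragraph, the periodic orbits of $\widetilde{X}^t$ are exactly the lifts of those of $X^t$, and the trivial-Frobenius hypothesis gives
\[
\#\{\widetilde{\gamma} : \ell(\widetilde{\gamma}) \le T\} = p \cdot \#\{\gamma \in \mathcal{P} : \ell(\gamma) \le T\} \qquad \text{for all } T > 0,
\]
the $\widetilde{\gamma}$ ranging over periodic orbits of $\widetilde{X}^t$, with periods matching those downstairs. In particular the two length spectra coincide, so $\widetilde{X}^t$ is topologically weak mixing if and only if $X^t$ is (by the characterisation of weak mixing in terms of the length spectrum), and $h_{\mathrm{top}}(\widetilde{X}^t) = h_{\mathrm{top}}(X^t) =: h$ because $\pi$ is a finite covering. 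In the weak mixing case the prime orbit theorem gives $\#\{\gamma : \ell(\gamma) \le T\} \sim e^{hT}/(hT)$ and the same asymptotic for $\widetilde{X}^t$ with the same $h$, which is incompatible with the factor $p \ge 2$; the non-weak-mixing case reduces to the analogous count for a suspended Anosov diffeomorphism, and uniformly one may instead note that the Ruelle zeta functions obey $\widetilde{\zeta}(s) = \zeta(s)^p$ whereas each has only a simple pole at $s = h$. Hence $\Gamma = H_1(M;\mathbb{Z})$.

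The main obstacle I anticipate lies in this last step: the factor $p$ is invisible to the exponential growth rate of periodic orbits, so one genuinely needs the sharp prime orbit asymptotics (equivalently, the order of the pole of the zeta function at $s = h$) rather than just topological entropy, and this forces one first to establish that $\widetilde{X}^t$ is transitive so that those tools are available on $\widetilde{M}$. Everything else is soft: the passage to a finite cyclic cover, the interpretation of triviality of the Frobenius, and the lifting arguments.
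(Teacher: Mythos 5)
Your argument is correct, and it is essentially the argument behind the result as the paper uses it: the paper gives no proof but cites Parry--Pollicott's Chebotarev theorem for Galois coverings, whose application to homology is exactly your reduction to a finite cyclic cover on which every closed orbit has trivial Frobenius, contradicted by comparing orbit counts (equivalently, the order of the pole of the zeta function at $s=h$, since $\widetilde{\zeta}=\zeta^p$ while a transitive Anosov flow's zeta function has a pole of order exactly one there). Your intermediate steps --- surjecting the cokernel onto $\mathbb{Z}/p\mathbb{Z}$, lifting the Anosov structure, and deducing transitivity of the lift from density of lifted periodic orbits plus the spectral decomposition on a connected cover --- are all sound, and you correctly identify that the sharp pole order (not just the entropy) is what detects the factor $p$.
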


Let $\mathcal M(X)$ denote the space of $X^t$-invariant Borel probability measures on $M$
(with the weak$^*$ topology).
For each $\nu \in \mathcal M(X)$, there is
an associated homology class $\Phi_\nu \in H_1(M,\mathbb R)$,
called the {\it winding cycle} (or asymptotic cycle) for the measure. These cycles
were
introduced by Schwartzman 
\cite{Sch}. We define $\Phi_\nu$ using the duality $H_1(M,\mathbb R) =H^1(M,\mathbb R)^*$ 
and the formula
\[
\langle \Phi_\nu,[\omega]\rangle = \int \omega(X) \, d\nu,
\]
where $\omega$ is a closed $1$-form on $M$, $[\omega] \in H^1(M,\mathbb R)$ is its cohomology class,
 and
\[
\langle \cdot, \cdot \rangle : H_1(M,\mathbb R) \times H^1(M,\mathbb R) \to \mathbb R
\]
is the duality pairing. This is well defined since if $[\omega']=[\omega]$ then $\omega$ and $\omega'$
differ by an exact form $d\theta$, say, and we have
\[
\int d\theta(X) \, d\nu = \int L_X\theta \, d\nu =0,
\]
where $L_X$ is the Lie derivative,
\[
L_X\theta(x) = \lim_{t \to 0} \frac{1}{t} (\theta(X^t x) - \theta(x)).
\]
(That the final integral vanishes follows from the invariance of $\nu$ and the
dominated convergence theorem.)

We say that $X$ is {\it homologically full} if 
every integral homology class in $H_1(M,\mathbb Z)$ is represented by a periodic orbit.
The following proposition is a consequence of the results in \cite{Sh93}.

\begin{proposition}\label{prop:equiv_to_hom_full}
The following are equivalent:
\begin{enumerate} 
\item[(i)] $X$ is homologically full;
\item[(ii)]
the map $[\, \cdot \,] : \mathcal P \to H_1(M,\mathbb Z)/\mathrm{Tor}$ is a surjection;
\item[(iii)]
$0 \in \mathrm{int}(\{\Phi_\nu \hbox{ : } \nu \in \mathcal M(X)\})$.
\end{enumerate}
\end{proposition}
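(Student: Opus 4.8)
The plan is to prove the equivalence of the three conditions via the cycle $(i)\Rightarrow(ii)\Rightarrow(iii)\Rightarrow(i)$, leaning on the structural results quoted above and the thermodynamic machinery that will be developed in later sections.

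\emph{From (i) to (ii).} This is essentially by definition: if every integral homology class in $H_1(M,\mathbb Z)$ is represented by a periodic orbit, then in particular every coset in $H_1(M,\mathbb Z)/\mathrm{Tor}$ is hit by the map $[\,\cdot\,]$. Conversely, for $(ii)\Rightarrow$ something usable, I would observe that by Proposition \ref{prop:periodic_orbits_generate_homology} the integral classes of periodic orbits generate $H_1(M,\mathbb Z)$; since one can concatenate periodic orbits (their homology classes add under disjoint union, and one may form "multi-orbit" configurations), surjectivity onto $H_1(M,\mathbb Z)/\mathrm{Tor}$ together with the fact that $\mathrm{Tor}$ is finite lets one realise, after adding a suitable fixed multiple, any torsion class as well; this is where I would invoke the results of \cite{Sh93} rather than reprove them.

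\emph{From (ii) to (iii).} The key point is that the winding cycle map $\nu \mapsto \Phi_\nu$ is affine and continuous on the compact convex set $\mathcal M(X)$, so its image $W := \{\Phi_\nu : \nu \in \mathcal M(X)\}$ is a compact convex subset of $H_1(M,\mathbb R)$. The normalised measure $\mu_\gamma$ supported on a periodic orbit $\gamma$ has winding cycle $\Phi_{\mu_\gamma} = [\gamma]/\ell(\gamma)$, so $W$ contains all the rays through the (rational) homology directions of periodic orbits. If $[\,\cdot\,]:\mathcal P \to H_1(M,\mathbb Z)/\mathrm{Tor}$ is surjective, these directions positively span $H_1(M,\mathbb R) \cong \mathbb R^b$, i.e. there are periodic orbits $\gamma_1,\dots,\gamma_k$ whose homology classes positively span the whole space; then a convex combination of the $\mu_{\gamma_i}$ (with weights proportional to $\ell(\gamma_i)$) has winding cycle $0$, and by perturbing the weights one sees $0$ lies in the interior of $W$. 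I would spell this out as: $0 = \sum_i t_i [\gamma_i]$ with all $t_i>0$ forces $0 \in \mathrm{int}\,(\mathrm{conv}\{[\gamma_i]\}) \subseteq \mathrm{int}(W)$ since a point that is a strictly positive combination of a spanning set is interior to their convex hull.

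\emph{From (iii) to (i).} This is the substantive direction and the main obstacle. The idea is that if $0$ is an interior point of $W = \{\Phi_\nu\}$, then in particular $0$ is the winding cycle of some (indeed an open set of) invariant measure(s), and one can choose $0 = \Phi_\nu$ with $\nu$ having full support or, better, arrange $0$ to be the winding cycle of an equilibrium state with a strictly convex dependence on the cohomology parameter; then a large-deviation / Livšic-type argument produces periodic orbits in every prescribed homology class. Concretely, I expect the argument of \cite{Sh93} to run as follows: consider, for $\xi \in H^1(M,\mathbb R)$, the pressure $P(\xi) = P(\langle \xi, \cdot\rangle)$ of the corresponding winding-cycle functional; its gradient traces out $W$, and interiority of $0$ means $0 = \nabla P(\xi_0)$ for some $\xi_0$, with the Hessian nondegenerate generically; tilting by $\xi_0$ and applying the prime orbit theorem in homology classes (or a Tauberian/contour-integral argument as in Katsuda--Sunada) then gives asymptotically positively many periodic orbits in each class $\alpha \in H_1(M,\mathbb Z)/\mathrm{Tor}$, hence at least one, and finally torsion classes are handled by the concatenation trick above. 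Since the excerpt explicitly says this proposition "is a consequence of the results in \cite{Sh93}", in the write-up I would quote the relevant statements from that paper and assemble the equivalence, flagging that the only genuinely new bookkeeping is the passage between $H_1(M,\mathbb Z)/\mathrm{Tor}$ and $H_1(M,\mathbb Z)$, which uses finiteness of $\mathrm{Tor}$ and Proposition \ref{prop:periodic_orbits_generate_homology}.
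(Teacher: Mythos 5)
The paper offers no proof of this proposition at all --- it is stated as ``a consequence of the results in \cite{Sh93}'' --- so the real comparison is with the argument in that reference. Your implications (i)$\Rightarrow$(ii) and (ii)$\Rightarrow$(iii) are correct and are the standard ones: (i)$\Rightarrow$(ii) is immediate from the definitions, $\Phi_{\mu_\gamma}=[\gamma]/\ell(\gamma)$, surjectivity onto $H_1(M,\mathbb Z)/\mathrm{Tor}\cong\mathbb Z^b$ supplies orbits whose classes positively span $H_1(M,\mathbb R)$, and a point which is a strictly positive combination of a spanning family lies in the interior of their convex hull, hence in the interior of the compact convex set $\{\Phi_\nu : \nu \in \mathcal M(X)\}$. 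Your outline of (iii)$\Rightarrow$(i) --- properness of the pressure function $\xi\mapsto P(f_\xi)$ forced by interiority of $0$, a critical point $\xi_0$ with $\nabla P(f_{\xi_0})=0$, and then an orbit-counting theorem in homology classes --- is also the route taken in \cite{Sh93} and \cite{KS}.

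The genuine gap is your treatment of torsion. Homologically full means every class of $H_1(M,\mathbb Z)\cong\mathbb Z^b\oplus\mathrm{Tor}$ is represented by a \emph{single} periodic orbit, and the ``concatenation / multi-orbit configuration'' device does not deliver this: a disjoint union of periodic orbits is not a periodic orbit, so the observation that homology classes add under union only realises classes by $1$-cycles built from orbits, which is exactly the content of Proposition \ref{prop:periodic_orbits_generate_homology} and strictly weaker than (i). (A genuine gluing argument in the symbolic model does produce one periodic orbit shadowing two given ones, but its class is $[\gamma_1]+[\gamma_2]$ plus an uncontrolled correction coming from the transition words, so this too needs real work rather than being a ``trick''.) The way \cite{Sh93} and \cite{KS} actually dispose of torsion is by proving the counting asymptotic for orbits in each \emph{fixed integral class} $\alpha\in H_1(M,\mathbb Z)$, not merely in each class mod torsion: one runs the Tauberian/contour argument over characters of $\mathbb T^b\times\widehat{\mathrm{Tor}}$, and the non-vanishing of the twisted zeta functions attached to the finite factor is precisely where the Parry--Pollicott Chebotarev theorem (Proposition \ref{prop:periodic_orbits_generate_homology}) enters. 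If you replace the concatenation step by a citation of that integral-class counting theorem, the cycle of implications closes correctly.
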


If $X^t$ is the suspension of a diffeomorphism then it cannot be homologically full
\cite{fried}. In particular, if a transitive Anosov flow is homologically full then it is 
automatically weak-mixing.

For each $\alpha \in H_1(M,\mathbb Z)/\mathrm{Tor}$, write
\[
\mathcal P(\alpha) = \{\gamma \in \mathcal P \hbox{ : } [\gamma]=\alpha\}.
\]

Our equidistribution results apply to Anosov flows in any dimension but when we consider linking 
of periodic orbits and helicity, we need to restrict to flows on $3$-manifolds.
In this case, we say that $M$ is a {\it real homology $3$-sphere} if it
has the same
real homology as $S^3$. 
This amounts to $M$ being a connected manifold
(so that $H_0(M,\mathbb R)\cong\mathbb R$) with $H_1(M,\mathbb R)=\{0\}$, since Poincar\'e duality gives
the homology in the remaining dimensions.
If $X^t : M\to M$ is a transitive Anosov flow on a real homology $3$-sphere then it is automatically 
homologically full (and hence weak-mixing) and $\mathcal P(0)=\mathcal P$.

\begin{example}\label{ex:orbifold}
There are a wealth of examples of transitive Anosov flows on real homology $3$-spheres. 
The simplest are provided by the following.
Let $S=\H^2/\Gamma$ be a hyperbolic $2$-orbifold of genus zero. By Theorem 13.3.6 in \cite{thurston}, this means that $\Gamma$ is a Fuchsian group acting freely and properly discontinuously on the hyperbolic plane $\mathbb H^2$, such that the number of cone points $p$ of $S$ satisfies $p\geq 5$, or $p=4$ and the orders are not all 2, or $p=3$ and the orders satisfy that the sum of their reciprocals is smaller than 1. We can think of $M=(T^1\H^2)/\Gamma\cong \PSL_2(\R)/\Gamma$ as the unit tangent bundle of $S$. Now, $M$ is a compact real homology 3-sphere (see Lemma 2.1 in \cite{dehornoy}), and the flow $X^t:M \to M$, given by the quotient of the geodesic flow on $\H^2$, is Anosov.
In this case, one should see the calculation of helicity in Example 2.2.1 of \cite{verjovsky-vila}.

Surgery techniques give rise to more examples. For example, consider the suspension flow of
the Anosov diffeomorphism on $\mathbb T^2$ induced by
the matrix $\left(\begin{smallmatrix} 2 & 1 \\ 1 & 1
\end{smallmatrix}\right)$. The complement of the flow orbit of the fixed point of the diffeomorphism at $0$ is homeomorphic to the complement of the figure eight knot in $S^3$. 
Goodman \cite{goodman} showed how to use Dehn surgeries on this knot complement to
produce new examples of Anosov flows on real homology $3$-spheres
(see also the more recent work of Foulon and Hasselblatt \cite{foulon-hasselblatt}).
\end{example}

\section{Pressure and equilibrium states}\label{section:pressure}
Let $X^t : M \to M$ be a transitive Anosov flow and let 
$\nu \in \mathcal M(X)$. We let $h(\nu)$ denote the measure-theoretic entropy of the time-$1$ map
$X^1 : M \to M$ with respect to $\nu$. Then the topological entropy $h(X)$ satisfies the variational 
principle
\[
h(X) = \sup\{h(\nu) \hbox{ : } \nu \in \mathcal M(X)\}.
\]
More generally, for a continuous function
$\varphi : M \to \mathbb R$,
we can define the {\it pressure} $P(\varphi)$ of $\varphi$ by
\begin{equation}
P(\varphi) = \sup\left\{h(\nu) + \int \varphi \, d\nu \hbox{ : } \nu \in \mathcal M(X)\right\}.
\end{equation}
If $\varphi$ is H\"older continuous then the supremum is attained at a unique measure 
$\mu_\varphi$, which we call the {\it equilibrium state} for $\varphi$ \cite{PP}.
It is immediate from the definition of pressure that if $\varphi \le \psi$ then
$P(\varphi) \le P(\psi)$.
The equilibrium state of a H\"older continuous function is ergodic and fully
supported. 

We will be particularly interested in the case where $\varphi = \varphi^u$,
defined in the introduction.
Then 
$\mu_{\varphi^u}$ is the Sinai--Ruelle--Bowen (SRB) measure for $X$, i.e., the
unique $X^t$-invariant probability measure which is absolutely
continuous with respect to the volume on $M$ \cite{bowen_ruelle}.
In particular, if $X^t$ preserves the Riemannian volume $m$ then
$m = \mu_{\varphi^u}$.

We now give an alternative, topological, definition of pressure, which we will use later. For further details of this definition, see Chapter 4 of \cite{fisher}.
Given $T,\delta>0$, we call the set 
$$
B(x,\delta,T):=\{y\in M:d(X^tx,X^ty)<\delta\text{ for all }0\leq t< T\}.
$$ 
a \textit{Bowen ball} around $x\in M$. Then, for $\delta>0$, a set $E\subset M$ is $(T,\delta)$\textit{-spanning} if 
$$
M=\bigcup_{x\in E}B(x,\delta,T).
$$ 
On the other hand, $E$ is $(T,\delta)$\textit{-separated} if whenever $x,y\in E$ with $x\neq y$, 
we have
$$
\max_{0\leq t <T} d(X^tx,X^ty) \geq\delta.
$$ 
We can then define pressure in the following way.
\begin{align*}
P(\varphi)&=\lim_{\delta\to 0}\limsup_{T\to\infty}\frac{1}{T}\log\inf\bigg\{\sum_{x \in E}
e^{\int_0^T \varphi(X^tx) \, dt}:E\,(T,\delta)\text{-spanning}\bigg\}\\
&=\lim_{\delta\to 0}\limsup_{T\to\infty}\frac{1}{T}\log\sup\bigg\{\sum_{x \in E}
e^{\int_0^T \varphi(X^tx) \, dt}:E\,(T,\delta)\text{-separated}\bigg\}.
\end{align*}
This definition is used in Section
\ref{section:proof_of_main}.

The characterisation of homologically full transitive Anosov flows in part (iii) of Proposition 
\ref{prop:equiv_to_hom_full} may be modified to give a statement in terms of the equilibrium states of H\"older continuous functions, as follows.

\begin{proposition}[Sharp \cite{Sh93}]\label{prop:hom_full_eq_state}
$X$ is homologically full if and only if there exists a H\"older continuous function 
$\varphi : M \to \mathbb R$ with $\Phi_{\mu_\varphi}=0$.
\end{proposition}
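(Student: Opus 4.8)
The statement to prove is Proposition~\ref{prop:hom_full_eq_state}: $X$ is homologically full if and only if there exists a H\"older continuous $\varphi : M \to \mathbb R$ with $\Phi_{\mu_\varphi} = 0$.

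The plan is to exploit Proposition~\ref{prop:equiv_to_hom_full}(iii), namely that homological fullness is equivalent to $0$ lying in the \emph{interior} of the set of winding cycles $W := \{\Phi_\nu : \nu \in \mathcal M(X)\}$, together with the fact that the winding cycles realised by equilibrium states of H\"older potentials sweep out the interior of $W$. First I would recall (or establish) the structural properties of $W$: it is a compact convex subset of $H_1(M,\mathbb R) \cong \mathbb R^b$, since $\mathcal M(X)$ is compact and convex and $\nu \mapsto \Phi_\nu$ is affine and continuous. The key analytic input is the behaviour of the pressure function on cohomology: fixing a basis $\omega_1,\dots,\omega_b$ of $H^1(M,\mathbb R)$, consider the real-analytic convex function $\mathbf t \mapsto P(\sum_i t_i \omega_i(X))$ on $\mathbb R^b$, and note that its gradient at $\mathbf t$ is the vector $(\int \omega_i(X)\, d\mu_{\mathbf t})_i = \Phi_{\mu_{\mathbf t}}$ expressed in the dual basis, where $\mu_{\mathbf t}$ is the equilibrium state of $\sum_i t_i\omega_i(X)$. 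This is the derivative formula for pressure in the thermodynamic formalism.

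The easy direction is ($\Leftarrow$): if some H\"older $\varphi$ has $\Phi_{\mu_\varphi} = 0$, I would show $0 \in \mathrm{int}(W)$, hence by Proposition~\ref{prop:equiv_to_hom_full} $X$ is homologically full. The point is that the equilibrium state $\mu_\varphi$ is fully supported (stated in the excerpt), so by perturbing $\varphi$ by small multiples $s\,\omega_i(X)$ of the harmonic representatives and differentiating, one sees that the map $\mathbf s \mapsto \Phi_{\mu_{\varphi + \sum s_i\omega_i(X)}}$ has image containing a neighbourhood of $0$ in $H_1(M,\mathbb R)$ --- unless the Hessian degenerates, which happens only if the flow fails to be weak-mixing in a homological direction, i.e. if some nontrivial $\omega(X)$ is cohomologous to a constant (via a coboundary); homological fullness of suspensions being excluded by \cite{fried} fits here, but for this direction one argues directly that $\Phi_{\mu_\varphi}=0$ forces the image to be a neighbourhood of $0$. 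For ($\Rightarrow$): assuming $0 \in \mathrm{int}(W)$, I would use the pressure function $\mathbf t \mapsto P(\sum_i t_i\omega_i(X))$; its gradient map ranges over $\mathrm{int}(W)$ (this is the Legendre-duality picture: $\mathrm{int}(W)$ is the interior of the domain of the convex conjugate, and the gradient of pressure is onto it, by a standard argument using that pressure is finite and the entropy function is upper semicontinuous), so there exists $\mathbf t^*$ with $\nabla P(\mathbf t^*) = 0$, i.e. $\Phi_{\mu_{\mathbf t^*}} = 0$, and $\varphi := \sum_i t^*_i \omega_i(X)$ is the desired H\"older (indeed smooth) function.

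The main obstacle is justifying that the gradient of the pressure function surjects onto $\mathrm{int}(W)$ and locating the critical point where the gradient vanishes. This requires: (a) strict convexity of $\mathbf t \mapsto P(\sum t_i\omega_i(X))$ modulo the subspace of directions in which $\sum t_i \omega_i(X)$ is cohomologous to a constant (equivalently, modulo coboundaries) --- here weak-mixing of homologically full flows, hence non-degeneracy of the relevant variance/Hessian, is what one needs, and one should be careful that without strict convexity the gradient need only hit $\mathrm{int}(W)$ rather than be injective, which is still enough; and (b) that $0 \in \mathrm{int}(W)$ implies $0$ is actually attained by the gradient and not merely approached in the limit $|\mathbf t| \to \infty$ --- this is a coercivity/properness argument: the convex conjugate of $P$ restricted to its effective domain (which is $-h$ composed with the winding-cycle map, so has effective domain exactly $\overline{W}$) is superlinear off $\mathrm{int}(W)$, forcing the gradient of $P$ to be proper onto $\mathrm{int}(W)$, hence surjective. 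This is all standard in the thermodynamic formalism of Anosov flows but the excerpt defers the detailed pressure-on-cohomology machinery to Section~\ref{section:therm-yoga}, so in this section I would give the argument at the level of citing \cite{Sh93} and Proposition~\ref{prop:equiv_to_hom_full}, filling in only the derivative-of-pressure identity and the convexity/coercivity points sketched above.
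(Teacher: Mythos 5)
The paper does not actually prove this proposition --- it is quoted from \cite{Sh93} --- but your argument is correct and is essentially the approach of \cite{Sh93} and of the machinery the paper itself develops in Section \ref{section:therm-yoga}: the gradient of the pressure function on $H^1(M,\mathbb R)$ is the winding cycle of the corresponding equilibrium state, degeneracy of the Hessian is ruled out via Proposition \ref{prop:periodic_orbits_generate_homology} (using $\Phi_{\mu_\varphi}=0$ to force the relevant constant to be zero), and coercivity of the pressure gives the critical point for ($\Rightarrow$) while the inverse function theorem gives $0\in\mathrm{int}\{\Phi_\nu\}$ for ($\Leftarrow$). No gaps beyond the standard thermodynamic facts you cite.
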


We will use the following lemma when we discuss
large deviations in Section \ref{section:equi-null}.

\begin{lemma} \label{entropyusc}
The map $\mathcal M(X) \to \mathbb R : \nu \mapsto h(\nu)$ is upper semi-continuous
and
\[
h(\nu) = \inf \left\{P(\varphi) - \int \varphi \, d\nu \hbox{ : } \varphi \in C(M,\mathbb R)\right\}.
\]
\end{lemma}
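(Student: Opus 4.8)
The plan is to follow the classical arguments for a single transformation, as given in Theorems 8.2 and 9.12 of \cite{walters}, adapted to the flow setting via the time-$1$ map. First I would establish upper semi-continuity of $\nu \mapsto h(\nu)$. The key input is that an Anosov flow is expansive: there is an expansive constant $\delta_0 > 0$ such that if $d(X^t x, X^t y) \le \delta_0$ for all $t \in \mathbb R$, then $y = X^s x$ for some small $|s|$. This is a standard consequence of the hyperbolic splitting $TM = E^0 \oplus E^s \oplus E^u$. Expansiveness of the flow transfers to the relevant covering/partition estimates for the time-$1$ map $X^1$: one fixes a finite measurable partition $\mathcal{Q}$ whose elements have diameter below the expansive scale and whose boundaries have $\nu$-measure zero (possible after perturbing the partition, using that only countably many level sets can have positive measure), and then $h(\nu) = h(\nu, \mathcal{Q})$. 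The function $\nu \mapsto h(\nu, \mathcal{Q}) = \lim_n \frac{1}{n} H_\nu(\bigvee_{j=0}^{n-1} (X^1)^{-j}\mathcal{Q})$ is an infimum over $n$ of the functions $\nu \mapsto \frac{1}{n} H_\nu(\bigvee_{j=0}^{n-1}(X^1)^{-j}\mathcal{Q})$, each of which is upper semi-continuous in $\nu$ because the partition boundaries are $\nu$-null (so $\nu_k(Q) \to \nu(Q)$ for the relevant sets when $\nu_k \to \nu$ weak$^*$, and $-x\log x$ is continuous); an infimum of upper semi-continuous functions is upper semi-continuous. Hence $\nu \mapsto h(\nu)$ is upper semi-continuous on $\mathcal M(X)$.

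Next I would derive the variational formula for $h(\nu)$. One inequality is immediate: the pressure variational principle $P(\varphi) \ge h(\nu) + \int \varphi \, d\nu$ holds for every $\nu \in \mathcal M(X)$ and every continuous $\varphi$, so $h(\nu) \le P(\varphi) - \int \varphi \, d\nu$ for all $\varphi \in C(M,\mathbb R)$, giving $h(\nu) \le \inf_\varphi \left(P(\varphi) - \int \varphi \, d\nu\right)$. For the reverse inequality, fix $\nu$ and suppose for contradiction that $\inf_\varphi \left(P(\varphi) - \int \varphi \, d\nu\right) > h(\nu)$. The set $\{(\mu, t) \in \mathcal M(X) \times \mathbb R : t \le h(\mu)\}$ is, by upper semi-continuity of entropy together with concavity of $h$ (entropy is affine, hence concave, on $\mathcal M(X)$), a closed convex subset of the locally convex space $C(M,\mathbb R)^* \times \mathbb R$ (identifying $\mathcal M(X)$ with a weak$^*$-compact convex subset of the dual). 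The point $(\nu, h(\nu) + \varepsilon)$ for small $\varepsilon > 0$ lies outside this set, so by the Hahn--Banach separation theorem there is a continuous linear functional on $C(M,\mathbb R)^* \times \mathbb R$, i.e.\ a pair $(\psi, c)$ with $\psi \in C(M,\mathbb R)$ (using that $C(M,\mathbb R)$ is the predual and the subset is bounded) and $c \in \mathbb R$, strictly separating them. Arguing as in the proof of Theorem 9.12 of \cite{walters}, one normalises $c$ and checks $c < 0$, then rescales $\psi$ so that the separating inequality reads $\int \psi \, d\mu + h(\mu) \le \int \psi \, d\nu + h(\nu) + \varepsilon/2$ for all $\mu \in \mathcal M(X)$; taking the supremum over $\mu$ gives $P(\psi) \le \int \psi \, d\nu + h(\nu) + \varepsilon/2$, i.e.\ $P(\psi) - \int \psi \, d\nu \le h(\nu) + \varepsilon/2 < h(\nu) + \varepsilon \le \inf_\varphi \left(P(\varphi) - \int \varphi\, d\nu\right)$, a contradiction. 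This establishes the formula.

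The main obstacle is the first step: being careful that expansiveness of the \emph{flow} (which is expansiveness only up to reparametrisation, by the flow direction $E^0$) genuinely yields $h(\nu) = h(\nu,\mathcal{Q})$ for a suitable partition of the time-$1$ map, so that Theorem 8.2 of \cite{walters} applies. The clean way is to invoke that a topologically transitive Anosov flow is expansive in Bowen--Walters' sense and that its time-$1$ map is an expansive homeomorphism in the usual sense after quotienting appropriately, or more directly to use the known fact (e.g.\ from \cite{fisher}) that for Anosov flows any partition with sufficiently small diameter and $\nu$-null boundary is a generator for $X^1$ in the entropy sense. Given that, the rest is a routine transcription of the single-map proofs, which is why we have only sketched it.
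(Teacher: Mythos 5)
Your proposal takes exactly the route the paper does: the paper offers no detailed proof, only the remark that upper semi-continuity follows from expansiveness of the flow as in Theorem 8.2 of \cite{walters}, and that the infimum formula is then obtained by the convexity/separation argument of Theorem 9.12 of \cite{walters}. Your expanded sketch of both steps, including your explicit caveat that flow-expansiveness (rather than expansiveness of the time-one map itself) is what must be used to show a small partition computes the entropy, is correct and consistent with the paper's intended argument.
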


The first statement in Lemma \ref{entropyusc} follows from the fact that the flow is expansive
(see Remark 4.3.18 and Corollary A.3.14 of \cite{fisher}).
Once we have established upper semi-continuity, rearranging the variational principle above into this form follows the same argument as
the proof of Theorem 9.12
of \cite{walters}.

We will need to use the notion of functions being cohomologous with respect to
$X$.
We say that continuous functions $\varphi,\psi : M \to \mathbb R$ are $X$-cohomologous
if there is a continuous function $u : M \to \mathbb R$ that is differentiable along flow lines
satisfying
\[
\varphi-\psi = L_Xu.
\]
Two $X$-cohomologous functions have the same integral with respect to every measure in $\mathcal M(X)$. For a constant $c \in \mathbb R$, we have
\[
P(\varphi +L_Xu+c) = P(\varphi)+c.
\]

For $\gamma \in \mathcal P$, let
\[
\int_\gamma \varphi := \int_0^{\ell(\gamma)} \varphi(X^t x_\gamma)
\, dt
\]
with
$x_\gamma \in \gamma$. If $\varphi$ and $\psi$ are $X$-cohomologous then it is clear that,
for every $\gamma \in \mathcal P$,
$\int_\gamma \varphi = \int_\gamma \psi$. However, we also have the following converse.

\begin{lemma}[Livsic \cite{livsic}]\label{lem:livsic}
Suppose that $\varphi,\psi : M \to \mathrm R$ are H\"older continuous. If
\[
\int_\gamma \varphi = \int_\gamma \psi
\quad \forall \gamma \in \mathcal P
\]
then $\varphi$ and $\psi$ are $X$-cohomologous.
\end{lemma}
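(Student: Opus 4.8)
The plan is to reduce the statement to the classical Livsic periodic-orbit theorem for hyperbolic diffeomorphisms via a symbolic (or section) model, using the hypothesis on $\int_\gamma\varphi$ for periodic orbits to produce the transfer function $u$. First I would replace $\varphi$ and $\psi$ by the single H\"older function $F=\varphi-\psi$, so that the hypothesis becomes $\int_\gamma F=0$ for every $\gamma\in\mathcal P$, and the goal is to find a continuous $u:M\to\mathbb R$, differentiable along flow lines, with $F=L_Xu$. The natural candidate is the Birkhoff-integral antiderivative: fix a reference point $p$ lying on a dense orbit (such a point exists since $X^t$ is topologically transitive), and for $x=X^{t(x)}(p)$ in the forward orbit of $p$ set $u(x):=\int_0^{t(x)}F(X^s p)\,ds$; then by construction $L_Xu=F$ along the dense orbit. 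The heart of the matter is to show that $u$ so defined is uniformly continuous on the orbit of $p$, hence extends continuously to all of $M$, and that the extension remains differentiable along flow lines with $L_Xu=F$ everywhere.

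The key steps, in order, are: (1) establish the \emph{closing lemma / shadowing} for the Anosov flow --- two nearby orbit segments of approximately equal length are shadowed by a genuine periodic orbit, with exponentially small error controlled by the initial distance; this is standard for Anosov flows. (2) Use the closing lemma together with the H\"older continuity of $F$ and the hypothesis $\int_\gamma F=0$ to bound the \emph{difference of Birkhoff integrals} of $F$ over two nearby, nearly-synchronous orbit segments of the reference orbit by a constant times a positive power of their distance; here one reparametrises time to account for the (exponentially small) discrepancy in periods, absorbing the error using the boundedness of $F$. This is the estimate that makes $u$ uniformly continuous. (3) Conclude that $u$ extends to a continuous function on $M$; then verify that for each $x\in M$ and $t$, $u(X^tx)-u(x)=\int_0^tF(X^sx)\,ds$ by approximating $x$ by points on the dense orbit and passing to the limit, which gives both the differentiability along flow lines and the identity $L_Xu=F$. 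Finally (4), unwind $F=\varphi-\psi$ to obtain $\varphi-\psi=L_Xu$, i.e.\ $\varphi$ and $\psi$ are $X$-cohomologous.

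The main obstacle is step (2): controlling the Birkhoff integral of $F$ along a long orbit segment that merely \emph{shadows} a periodic orbit, rather than coinciding with it, and handling the mismatch in periods. One must choose the shadowing periodic orbit, split the segment into pieces each shadowed with exponentially decaying error so the H\"older bounds sum to a geometric series, and carefully reparametrise so that the contribution of the length discrepancy is $O(\|F\|_\infty\cdot(\text{exponential error}))$; the hypothesis $\int_\gamma F=0$ is exactly what kills the main term and leaves only this controllable remainder. An alternative, and perhaps cleaner, route is to pass to a Markov section for the Anosov flow, reduce to a suspension over a subshift of finite type, absorb the roof function, and invoke the discrete-time Livsic theorem for subshifts (as in \cite{livsic,PP}); I would present whichever is shorter, but the analytic estimate above is self-contained and is the approach I would write out first.
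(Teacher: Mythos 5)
The paper does not actually prove this lemma: it is quoted as a classical result with a citation to Livsic's original paper, so there is no in-paper argument to compare against. Your outline is precisely the standard proof of the Livsic theorem for Anosov flows: set $F=\varphi-\psi$, define $u$ by the Birkhoff integral along a dense forward orbit of a reference point $p$, use the Anosov closing lemma together with the H\"older continuity of $F$ and the hypothesis $\int_\gamma F=0$ to obtain a (H\"older) modulus of continuity for $u$ on that orbit, extend by uniform continuity, and verify the cocycle identity $u(X^tx)-u(x)=\int_0^tF(X^sx)\,ds$ in the limit, which yields differentiability along flow lines and $F=L_Xu$. The step that needs the most care in a full write-up is exactly the one you flag: the flow version of the closing lemma shadows only up to a time reparametrisation, so comparing $\int_{t_1}^{t_2}F(X^sp)\,ds$ with $\int_\gamma F=0$ produces both a H\"older--exponential error, which sums to $O(\epsilon^\alpha)$ uniformly in the length of the segment, and an $O(\|F\|_\infty\,\epsilon)$ term from the period mismatch; both are harmless. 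Two minor points to make explicit: one needs a point whose \emph{forward} orbit is dense (available for transitive Anosov flows), and the extension argument should note that the modulus of continuity obtained is uniform over the orbit, not just local. Your alternative route through a Markov section and the discrete-time Livsic theorem for subshifts is also standard and equally acceptable.
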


We will use the following result later.

\begin{lemma}\label{lem:coho-to-neg}
Suppose that $\varphi : M \to \mathbb R$ is H\"older continuous. 
Then there exists $\epsilon>0$ and a H\"older
continuous function $v : M \to \mathbb R$ such that, for all $x \in M$ and $T \ge 0$, we have
\[
\int_0^T \varphi(X^tx) \, dt  \le (P(\varphi)-\epsilon) T + v(X^Tx) -v(x).
\]

\end{lemma}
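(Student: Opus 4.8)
The plan is to reduce the statement to a ``bounded coboundary correction'' (subaction) problem and to isolate the one point where something genuinely must be proved: the strict inequality $P(\varphi)>\beta(\varphi)$, where $\beta(\varphi):=\sup\{\int\varphi\,d\nu:\nu\in\mathcal M(X)\}$. First I would record the reduction. Setting $\psi:=\varphi-(P(\varphi)-\epsilon)$, the asserted inequality is equivalent to
\[
\int_0^T\psi(X^tx)\,dt\le v(X^Tx)-v(x)\qquad(x\in M,\ T\ge0).
\]
For a H\"older $v$ with this property to exist one needs $\sup_\nu\int\psi\,d\nu\le0$, i.e.\ $\epsilon\le P(\varphi)-\beta(\varphi)$; conversely, as soon as $\sup_\nu\int\psi\,d\nu<0$ such a $v$ can be constructed. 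So everything comes down to: (i) proving $P(\varphi)>\beta(\varphi)$, which then lets us take $\epsilon:=\tfrac12(P(\varphi)-\beta(\varphi))>0$; and (ii) a Ma\~n\'e-type lemma producing the H\"older subaction $v$.

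For (i), one always has $P(\varphi)\ge\beta(\varphi)$ since entropy is non-negative. If $\varphi$ is $X$-cohomologous to a constant $c$, then $\beta(\varphi)=c$ while $P(\varphi)=P(c)=h(X)+c>c$, because a transitive Anosov flow has positive topological entropy. If $\varphi$ is not cohomologous to a constant, I would examine $Q(t):=P((1+t)\varphi)-t\beta(\varphi)$ for $t\ge0$: testing the variational principle against a measure maximising $\int\varphi\,d\nu$ gives $Q(t)\ge\beta(\varphi)$ for all $t$; since $P(s\varphi)/s\to\beta(\varphi)$ as $s\to\infty$ we get $Q(t)/t\to0$; and $t\mapsto P((1+t)\varphi)$ is strictly convex (its second derivative is the asymptotic variance of $\varphi$ with respect to the relevant equilibrium state, which is positive exactly because $\varphi$ is not cohomologous to a constant). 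A strictly convex function on $[0,\infty)$ with $Q(t)/t\to0$ has $Q'<0$ throughout, hence is strictly decreasing, so $P(\varphi)=Q(0)>\lim_{t\to\infty}Q(t)\ge\beta(\varphi)$. (Alternatively: if $P(\varphi)=\beta(\varphi)$, an ergodic maximiser of $\int\varphi\,d\nu$ has zero entropy yet attains the supremum defining $P(\varphi)$, so by uniqueness of the equilibrium state it equals $\mu_\varphi$; but equilibrium states of H\"older potentials on transitive Anosov flows have positive entropy, a contradiction.)

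For (ii), I would pass to Bowen's symbolic model, realising the flow, up to a finite-to-one semiconjugacy, as a suspension of an irreducible subshift of finite type $(\Sigma,\sigma)$ with H\"older roof function $r>0$; then $\psi$ corresponds to the H\"older function $F(x):=\int_0^{r(x)}\psi$ taken along the orbit, and the condition $\sup_\nu\int\psi\,d\nu<0$ becomes $\sup_\mu\int F\,d\mu<0$. Since the maximal Birkhoff average of $F$ equals $\sup_\mu\int F\,d\mu$, there are $\eta>0$ and $C$ with $\sum_{i=0}^{n-1}F(\sigma^ix)\le-\eta n+C$ for all $x,n$, so that
\[
w(x):=\sup_{n\ge0}\ \sup_{\sigma^ny=x}\ \sum_{i=0}^{n-1}F(\sigma^iy)
\]
is finite; one checks directly that $F(x)+w(x)\le w(\sigma x)$ and, via the usual geometric bound on the variation of Birkhoff sums, that $w$ is H\"older. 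Integrating $w$ along flow lines inside each Markov box produces a function $v$ with $\int_0^T\psi(X^tx)\,dt\le v(X^Tx)-v(x)$, which, transferred back to $M$ with the standard care at the boundaries of the Markov sections, is the desired function.

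I expect the main obstacle to be step (i): the naive idea of simply adding entropy to a maximising measure fails when the oscillation of $\varphi$ exceeds $h(X)$, and one is forced to invoke the convexity (or real-analyticity) of the pressure function, or equivalently the Gibbs/positive-entropy property of equilibrium states. Step (ii) is essentially standard (in the spirit of Bousch and of Contreras--Lopes--Thieullen), the only mildly delicate point being the transfer of the subaction across the boundaries of the Markov partition.
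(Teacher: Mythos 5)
Your proposal is correct and follows the same overall strategy as the paper: reduce the lemma to the strict negativity of $\sup_{\nu}\int\bigl(\varphi-P(\varphi)+\epsilon\bigr)\,d\nu$, then produce a H\"older sub-action $v$ via Bowen's symbolic model. The differences are in how each half is discharged. For the negativity, the paper normalises to $P(\varphi)=0$, passes immediately to the suspension over a subshift of finite type, and cites the standard fact (Chapter 7 of \cite{PP}) that a H\"older symbolic potential with zero pressure is cohomologous to a strictly negative function; you instead prove the gap $P(\varphi)>\beta(\varphi)$ directly on $M$ via strict convexity of $t\mapsto P((1+t)\varphi)$ (or, in your parenthetical alternative, via the positive entropy of equilibrium states), which is a correct and more self-contained route --- indeed both arguments ultimately rest on the same fact, that a zero-entropy measure cannot be the equilibrium state of a H\"older potential. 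For the sub-action, the paper simply invokes Theorem 1 of \cite{PS04}, whereas you sketch the Ma\~n\'e/Bousch construction of $w$ on the shift and then say the transfer back to $M$ requires ``standard care at the boundaries of the Markov sections.'' Be aware that this transfer is precisely the nontrivial content of the Pollicott--Sharp theorem: the semiconjugacy is not injective on the boundaries of the Markov boxes, so a symbolically defined $w$ does not automatically descend to a well-defined, let alone H\"older, function on $M$. Your sketch is therefore not a complete proof of that step, but since the needed statement is exactly the cited result, this is a matter of attribution rather than a gap in the argument.
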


\begin{proof}
Since $P(\varphi -P(\varphi))=P(\varphi)-P(\varphi)=0$,
without loss of generality, we may assume that $P(\varphi)=0$.
We use the standard result that $X^t : M \to M$ may be modelled by a suspended flow over a mixing 
subshift of finite type $\sigma : \Sigma \to \Sigma$, with a strictly positive H\"older continuous roof function $r : \Sigma \to \mathbb R$ \cite{bowen2}. More precisely, let 
\[
\Sigma^r =
\{(x,\tau) \in \Sigma \times \mathbb R \hbox{ : } 0 \le \tau \le r(x)\}/\sim,
\] 
where we have the identifications
$(x,r(x)) \sim (\sigma x,0)$, and let $\sigma^t : \Sigma^r \to \Sigma^r$ be the flow
$\sigma^t(x,\tau) = (x,\tau+t)$ modulo $\sim$.
Then there is a H\"older continuous surjection $\pi : \Sigma^r \to M$ that semi-conjugates
$\sigma^t$ and $X^t$ and is sufficiently close to 
being a bijection that the pressure of a function with respect to $X^t$ and of its pull-back by $\pi$
are equal. The pressure of a function $q : \Sigma \to \mathbb R$
with respect to $\sigma$ may be defined to be
\[
P_\sigma(q) = \sup\left\{h_\sigma(m) + \int q \, dm \hbox{ : } m \in \mathcal M(\sigma)\right\},
\]
where $\mathcal M(\sigma)$ is the set of $\sigma$-invariant probability measures on $\Sigma$
and $h_\sigma(m)$ is the measure-theoretic entropy.
If we define a function $q_\varphi : \Sigma \to \mathbb R$ by
\[
q_\varphi(x) = \int_0^{r(x)} \varphi(\pi(x,\tau)) \, d\tau
\]
then the relationship between pressure with respect to $\sigma$ and with respect to the 
suspended flow gives that 
\[
P_\sigma(q_\varphi)= P_\sigma(-P(\varphi) r+q_\varphi)=0.
\]
It then follows that $q_\varphi$ is $\sigma$-cohomologous to a strictly negative function, i.e. that there exists a continuous function $u : \Sigma \to \mathbb R$ such that
$q_\varphi + u\circ \sigma -u$ is strictly negative and, in fact,
bounded above by 
$-\epsilon \|r\|_\infty$,
for some $\epsilon>0$
(see Chapter 7 of \cite{PP}).
We then have that
\[
 \int_\gamma (\varphi +\epsilon)
\le 0,
\]
for all $\gamma \in \mathcal P$.
It then follows from Theorem 1 of \cite{PS04} that
there exists a H\"older continuous function $v : M \to \mathbb R$ 
such that, for all $x \in M$ and $T \ge 0$,
\[
\int_0^T \varphi(X^tx) \, dt + \epsilon T \le v(X^Tx) -v(x).
\]
\end{proof}

We also need to consider functions of the form
\[
\mathbb R^b \to \mathbb R^b : (t_1,\ldots,t_b) \mapsto P(\varphi + t_1\psi_1 +
\cdots t_b \psi_b),
\]
for $b \ge 1$ and H\"older continuous functions $\varphi,\psi_1,\ldots,\psi_b : M \to \mathbb R$.
For subshifts of finite type results on differentiaing such functions are standard
(see, for example, \cite{PP}). The calculations for hyperbolic flows are carried out in \cite{lalley} and \cite{Sh92}.

\begin{lemma}[\cite{lalley},\cite{Sh92}]\label{convexity_of_pressure}
Let
$\varphi,\psi : M \to \mathbb R$
be H\"older continuous functions. Then the function
\[
\mathbb R \to \mathbb R : (t_1,\ldots,t_b) \mapsto P(\varphi+t_1\psi_1 + \cdots t_b\psi_b).
\]
is convex and real-analytic with
\begin{equation*}
\frac{\partial P(\varphi+t_1\psi_1 + \cdots + t_b\psi_b)}{\partial t_i}\bigg|_{(t_1,\ldots,t_b)=0} = \int  \psi_i \, d\mu_\varphi.
\end{equation*}
Furthermore, unless $a_1\psi_1 + \cdots a_b\psi_b$ is $X$-cohomologous to a constant for some
$(a_1,\ldots,a_b) \ne 0$,
$t \mapsto P(\varphi+t\psi)$ is strictly convex and
\begin{equation*}
\det \nabla^2P(\varphi+t_1\psi_1 + \cdots + t_b\psi_b)|_{(t_1,\ldots,t_b)=0}
>0.
\end{equation*}
\end{lemma}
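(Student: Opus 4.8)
\textbf{Proof proposal for Lemma \ref{convexity_of_pressure}.}
The plan is to transfer everything to the symbolic model and invoke the corresponding statements for subshifts of finite type, which are classical. First I would use the result (already cited in the proof of Lemma \ref{lem:coho-to-neg}) that $X^t : M \to M$ is modelled by a suspended flow $\sigma^t : \Sigma^r \to \Sigma^r$ over a mixing subshift of finite type, with a strictly positive H\"older roof function $r$, via a H\"older semiconjugacy $\pi$ which is close enough to a bijection that pressures are preserved. Pull back $\varphi$ and each $\psi_i$ to functions $q_\varphi, q_{\psi_1},\dots,q_{\psi_b}$ on $\Sigma$ by integrating along the roof, as in the proof of Lemma \ref{lem:coho-to-neg}. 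The key algebraic fact is the relation between pressure for the suspended flow and pressure for $\sigma$: for a H\"older $g : \Sigma^r \to \mathbb R$, $P_{\sigma^t}(g) = s$ if and only if $P_\sigma(q_g - s r) = 0$, where $q_g(x) = \int_0^{r(x)} g(\sigma^\tau(x))\,d\tau$. Thus the function $s(t_1,\dots,t_b) := P(\varphi + \sum_i t_i\psi_i)$ is defined implicitly by $F(t,s) := P_\sigma\big(q_\varphi + \sum_i t_i q_{\psi_i} - s r\big) = 0$.

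Next I would apply the well-known real-analyticity and convexity results for the pressure function of a mixing subshift of finite type with respect to a finite-dimensional family of H\"older potentials (see \cite{PP}): $(t,s) \mapsto F(t,s)$ is real-analytic and convex, with first partials given by integration against the equilibrium state of the potential, i.e. $\partial_s F = -\int r\, dm < 0$ at any point (since $r>0$), where $m$ is the relevant equilibrium measure for $\sigma$. Since $\partial_s F \neq 0$, the implicit function theorem gives that $s(t_1,\dots,t_b)$ is real-analytic. Differentiating $F(t,s(t)) = 0$ and evaluating at $t=0$, where the $\sigma$-equilibrium state of $q_\varphi - s(0) r$ corresponds under the standard bijection between $\sigma$-invariant and $\sigma^t$-invariant measures to $\mu_\varphi$ (pushed forward by $\pi$), yields
\[
\frac{\partial s}{\partial t_i}\bigg|_{t=0} = -\frac{\partial_{t_i} F}{\partial_s F}\bigg|_{t=0,s=s(0)} = \frac{\int q_{\psi_i}\, dm}{\int r \, dm} = \int \psi_i \, d\mu_\varphi,
\]
the last equality being the standard change-of-variables formula relating integrals over $\Sigma$ and over $\Sigma^r$. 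Convexity of $s$ follows because $s$ is the Legendre-type object cut out by the convex function $F$; more directly, convexity of $t \mapsto P(\varphi + t\psi)$ is immediate from the variational definition of pressure in Section \ref{section:pressure} (the supremum of the affine functions $\nu \mapsto h(\nu) + \int\varphi\,d\nu + t\int\psi\,d\nu$), so that part does not even need the symbolic model.

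For the strict convexity and the positivity of $\det\nabla^2 P$ at $t=0$, I would compute the Hessian of $s$ by differentiating $F(t,s(t))=0$ twice; the second-derivative matrix of $F$ in the $t$-variables is, by the standard subshift formulas, the covariance matrix of $(q_{\psi_1},\dots,q_{\psi_b})$ with respect to $m$ (a variance built from the asymptotic/central-limit data), and this is positive definite unless some nontrivial linear combination $\sum a_i q_{\psi_i}$ is $\sigma$-cohomologous to a constant. By the Livsic-type correspondence (Lemma \ref{lem:livsic} and the standard dictionary between flow-cohomology and base-cohomology through the roof function), $\sum a_i q_{\psi_i}$ is $\sigma$-cohomologous to a constant precisely when $\sum a_i \psi_i$ is $X$-cohomologous to a constant. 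Feeding the nondegenerate Hessian of $F$ through the implicit function theorem, together with $\partial_s F \neq 0$, gives that $\nabla^2 s(0)$ is positive definite, hence $\det\nabla^2 P(\varphi + \sum t_i\psi_i)|_{t=0} > 0$. The main obstacle, and the only place requiring care rather than citation, is bookkeeping the passage between the flow and its symbolic suspension: verifying that cohomology classes, equilibrium states, and the covariance/Hessian data all correspond correctly under $\pi$ and the roof function $r$ — in particular that the semiconjugacy being non-injective on a measure-zero set does not affect any of these identities. This is routine given \cite{bowen2}, \cite{PP}, \cite{lalley} and \cite{Sh92}, which is why the statement is presented as a lemma with a reference rather than a full proof.
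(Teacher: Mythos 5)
Your proposal is correct and follows essentially the same route as the paper, which gives no proof of its own but simply cites \cite{lalley} and \cite{Sh92}, where exactly this reduction to the symbolic suspension flow and the implicit function theorem applied to $F(t,s)=P_\sigma\bigl(q_\varphi+\sum_i t_iq_{\psi_i}-sr\bigr)=0$ is carried out. The one point to state more carefully is the degeneracy criterion: the Hessian of the flow pressure degenerates in direction $(a_1,\ldots,a_b)$ precisely when $\sum_i a_iq_{\psi_i}$ is $\sigma$-cohomologous to a constant multiple of the roof function $r$ (not to a constant), which is what corresponds, through the suspension, to $\sum_i a_i\psi_i$ being $X$-cohomologous to a constant.
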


\section{Equidistribution of periodic orbits}\label{section:equi_per_orbits}

In this section, we discuss weighted equidistribution results for periodic orbits
(with no restriction on the homology class).
For a continuous function $\varphi : M \to \mathbb R$ and $a<b$, write
\[
\pi_\varphi(T,\mathbbm{1}_{[a,b]})
=
\sum_{\gamma \in \mathcal P} 
\mathbbm{1}_{[a,b]}(\ell(\gamma)-T)
e^{\int_\gamma \varphi}.
\]
For $\gamma \in \mathcal P$, let $\mu_\gamma$ be the probability measure defined by
\[
\int \psi \, d\mu_\gamma= \frac{1}{\ell(\gamma)} \int_\gamma \psi.
\]
We will discuss the proof of the following equidistribution result.

\begin{theorem} \label{weighted-nonnull-equi}
Let $X^t : M \to M$ be a weak-mixing Anosov flow. 
Let $\varphi : M \to \mathbb R$ be H\"older continuous.
Then, for $a<b$, the measures
\[
\frac{1}{\pi_\varphi(T,\mathbbm{1}_{[a,b]})}
\sum_{\gamma \in \mathcal P} 
\mathbbm{1}_{[a,b]}(\ell(\gamma)-T)
e^{\int_\gamma \varphi} \mu_\gamma
\]
converge weak$^*$ to $\mu_{\varphi}$, as $T \to \infty$,
and the same holds if we replace $[a,b]$ by $(a,b)$, $(a,b]$ or $[a,b)$.
\end{theorem}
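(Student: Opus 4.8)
The plan is to deduce the equidistribution statement from the asymptotics of the weighted counting function $\pi_\varphi(T,\mathbbm 1_{[a,b]})$ via the usual ``ratio of pressures'' heuristic. First I would reduce everything to a prime orbit counting problem on the suspension flow over a mixing subshift of finite type, exactly as in the proof of Lemma~\ref{lem:coho-to-neg}: transfer $\varphi$ to the Hölder function $q_\varphi$ on $\Sigma$ and work with the suspended flow $\sigma^t:\Sigma^r\to\Sigma^r$, noting that pressures (with respect to $X^t$ and $\sigma^t$) and orbit integrals are preserved by the semiconjugacy $\pi$. The key analytic input is then a prime orbit theorem with a weight: for each Hölder $g:M\to\mathbb R$ the two-variable generating function $\sum_{\gamma}\ell(\gamma)e^{\int_\gamma\varphi}e^{-s\ell(\gamma)}\cdot(\text{contributions from }g)$ extends meromorphically past its abscissa of convergence $s=P(\varphi)$, with a simple pole at $s=P(\varphi)$ and no other poles on that vertical line — this last point is where weak mixing enters, since it guarantees (via Plante's theorem and the associated spectral statement for the transfer operator, or equivalently that $\{\ell(\gamma)\}$ is not contained in a discrete subgroup of $\mathbb R$) that there is no lattice obstruction. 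This is standard Ruelle/Parry–Pollicott zeta-function technology; I would cite \cite{PP} for the relevant Tauberian argument.

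Next I would use this to get the two asymptotics I actually need. Writing $h^\ast=P(\varphi)$, a Tauberian theorem gives, as $T\to\infty$,
\[
\pi_\varphi(T,\mathbbm 1_{[a,b]})\sim C\,\frac{e^{h^\ast T}}{T}\int_a^b e^{h^\ast u}\,du
\]
for a constant $C>0$ depending only on $\varphi$ (and on the window only through the factor $\int_a^b e^{h^\ast u}\,du$), and likewise, for a fixed continuous test function $\psi:M\to\mathbb R$,
\[
\sum_{\gamma\in\mathcal P}\mathbbm 1_{[a,b]}(\ell(\gamma)-T)\,e^{\int_\gamma\varphi}\Big(\frac{1}{\ell(\gamma)}\int_\gamma\psi\Big)
\sim C\,\frac{e^{h^\ast T}}{T}\int_a^b e^{h^\ast u}\,du\cdot\int\psi\,d\mu_\varphi .
\]
The appearance of $\int\psi\,d\mu_\varphi$ is the crux: it comes from differentiating the perturbed pressure $P(\varphi+t\psi)$ at $t=0$, using Lemma~\ref{convexity_of_pressure}, which identifies the leading coefficient of the perturbed counting function with the $\mu_\varphi$-integral of $\psi$. (Equivalently one differentiates the perturbed zeta function, or one approximates $\psi$ uniformly to compare the perturbed and unperturbed counts; I would spell out the variant most directly supported by the cited literature.) Dividing the second asymptotic by the first, the common factors $C\,e^{h^\ast T}T^{-1}\int_a^b e^{h^\ast u}\,du$ cancel and one obtains
\[
\frac{1}{\pi_\varphi(T,\mathbbm 1_{[a,b]})}\sum_{\gamma\in\mathcal P}\mathbbm 1_{[a,b]}(\ell(\gamma)-T)\,e^{\int_\gamma\varphi}\int\psi\,d\mu_\gamma\ \longrightarrow\ \int\psi\,d\mu_\varphi ,
\]
which, since $\psi$ was an arbitrary continuous function, is precisely weak$^\ast$ convergence to $\mu_\varphi$.

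Finally, the remark that $[a,b]$ may be replaced by $(a,b)$, $(a,b]$ or $[a,b)$ is immediate once the asymptotic for $\pi_\varphi(T,\mathbbm 1_{[a,b]})$ is known: the count attached to a single endpoint value $\ell(\gamma)-T=a$ is, by the same Tauberian estimate applied to a shrinking window, of lower order than $e^{h^\ast T}/T$ (indeed it is $o(e^{h^\ast T}/T)$), so altering the window at its endpoints changes neither numerator nor denominator to leading order. I expect the main obstacle to be the meromorphic-extension/no-other-poles step — that is, making the weak mixing hypothesis do its job to rule out a lattice of periods and hence a purely oscillatory term that would spoil the $T\to\infty$ limit — together with the careful bookkeeping needed to pass the semiconjugacy $\pi$ and the finitely-many non-injectivity points without affecting pressures or orbit sums; the differentiation-of-pressure identification of $\int\psi\,d\mu_\varphi$ as the limiting functional is conceptually the heart of the matter but is by now routine given Lemma~\ref{convexity_of_pressure}.
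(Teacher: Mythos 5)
Your route --- establish the full asymptotics of the weighted window count $\pi_\varphi(T,\mathbbm{1}_{[a,b]})$ and of its $\psi$-weighted companion, then divide and identify the ratio of leading coefficients by differentiating the pressure --- is genuinely different from the paper's, and it contains a gap that the paper explicitly flags. The Parry--Pollicott zeta-function/Tauberian machinery you invoke yields the asymptotic $\sum_{\ell(\gamma)\le T}e^{\int_\gamma\varphi}\sim e^{P(\varphi)T}/(P(\varphi)T)$, and hence your window asymptotic, only when $P(\varphi)>0$ (extended to $P(\varphi)=0$ by approximation). When $P(\varphi)<0$ --- which certainly occurs for a general H\"older $\varphi$ --- the cumulative count $\sum_{\ell(\gamma)\le T}e^{\int_\gamma\varphi}$ converges, so the window count is a difference of two convergent quantities, and the Wiener--Ikehara theorem gives no information about its decay rate: the simple pole of the zeta function at $s=P(\varphi)$ does not control this second-order behaviour by any standard Tauberian argument. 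Nor can you reduce to $P(\varphi)>0$ by adding a constant $c$ to $\varphi$: this multiplies the weight of each orbit in the window by $e^{c\ell(\gamma)}$, which varies across the window by a factor up to $e^{|c|(b-a)}$, so the normalised orbital measures for $\varphi$ and $\varphi+c$ differ and do not obviously share a weak$^*$ limit.

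The paper sidesteps this by proving only the logarithmic growth rate $\tfrac1T\log\pi_\varphi(T,\mathbbm{1}_{[a,b]})\to P(\varphi)$ (Lemma \ref{lem:growth_of_weighted_per_orbits}, where the comparison with $\varphi+c$ is harmless because only a log-asymptotic is sought), feeding this into Pollicott's large deviations estimate, and then deducing equidistribution from large deviations exactly as in the proof of Theorem \ref{weighted-equi}. If you want to keep your asymptotic-ratio strategy for all signs of $P(\varphi)$, you would need the full Babillot--Ledrappier analysis (as in Theorem \ref{weighted-alpha-asymptotic}, or the Remark following Lemma \ref{lem:growth_of_weighted_per_orbits}) rather than a Tauberian theorem, which is considerably heavier than the large deviations route. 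The rest of your outline --- the role of weak mixing in excluding a lattice of periods, the identification of $\int\psi\,d\mu_\varphi$ via the derivative of the perturbed pressure as in (\ref{parry-weighted-asymptotic}), and the treatment of the endpoints --- is fine once the asymptotics are in hand.
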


The case where $\varphi=0$ is a classical theorem of Bowen \cite{bowen-equi} and was 
reproved using zeta function techniques by Parry \cite{parry-equi}.
For $\varphi = \varphi^u$, the result was proved by Parry \cite{parry-cmp} and the same arguments cover the cases where $P(\varphi) \ge 0$ (see \cite{parry-maryland}, \cite{PP}).
Roughly speaking, if $P(\varphi)>0$ then the analytic properties of a dynamical zeta function can be used to show that, for a strictly positive H\"older continuous function $\psi : M \to \mathbb R$, we have
\begin{equation}\label{parry-weighted-asymptotic}
\sum_{\ell(\gamma) \le T} \left(\int_\gamma \psi\right)  e^{\int_\gamma \varphi} 
\sim \left(\int \psi \, d\mu_\varphi\right) \frac{e^{P(\varphi)T}}{P(\varphi)},
\end{equation}
and then the required result is obtained via elementary arguments.
This can be extended to $P(\psi)=0$ by approximation.
However, if $P(\varphi)<0$ then a different argument is needed.
This is based on the large deviations results of Kifer \cite{kifer} and an estimate on the
growth of $\pi_\varphi(T,\mathbbm{1}_{[a,b]})$. 
A sufficient estimate is claimed as Proposition 3 of
\cite{Po1995}, where it is is attributed to Parry \cite{parry-maryland}, but the quoted result was only
stated and proved by Parry
when $P(\varphi)>0$.
To fill this gap, we show the following.

\begin{lemma}\label{lem:growth_of_weighted_per_orbits}
For any continuous function $\varphi : M \to \mathbb R$ and $a<b$, we have
\[
\lim_{T \to \infty} \frac{1}{T} \log \pi_\varphi(T,\mathbbm{1}_{[a,b]}) = P(\varphi).
\]
\end{lemma}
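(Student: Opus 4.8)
The plan is to obtain matching upper and lower bounds on $\pi_\varphi(T,\mathbbm{1}_{[a,b]})$, both on the exponential scale, using the topological (spanning/separated set) definition of pressure together with the shadowing/closing properties of Anosov flows. Without loss of generality we may fix the interval; to simplify, note that it suffices to prove the result for intervals of the form $(0,\kappa]$ for small $\kappa>0$, since a general interval $[a,b]$ is covered by finitely many translates $T+a+(j-1)\kappa < \ell(\gamma) \le T+a+j\kappa$, and the contribution of each translate is controlled by the same argument applied to $T' = T+a+j\kappa$; the supremum over finitely many terms each growing like $e^{P(\varphi)T}$ still grows like $e^{P(\varphi)T}$.

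For the upper bound, I would use that periodic orbits of period in $(T-1,T]$ are, after picking one point on each, a $(T-1,\delta)$-separated set for the flow once $\delta$ is smaller than an expansivity constant (distinct prime orbits of bounded period cannot $\delta$-shadow each other for time $T-1$ when $\delta$ is small, by the expansiveness of Anosov flows, possibly after discarding orbits shorter than some fixed length, of which there are only finitely many). Moreover $\int_\gamma \varphi = \int_0^{\ell(\gamma)}\varphi(X^t x_\gamma)\,dt$ differs from $\int_0^{T-1}\varphi(X^t x_\gamma)\,dt$ by at most $2\|\varphi\|_\infty$ since $\ell(\gamma) \in (T-1,T]$. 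Hence $\pi_\varphi(T,\mathbbm{1}_{(0,1]}) \le e^{2\|\varphi\|_\infty}\sup\{\sum_{x\in E} e^{\int_0^{T-1}\varphi(X^tx)\,dt} : E \text{ is } (T-1,\delta)\text{-separated}\}$, and taking $\frac1T\log$ and then $\delta \to 0$ gives $\limsup_{T\to\infty}\frac1T\log\pi_\varphi(T,\mathbbm{1}_{(0,1]}) \le P(\varphi)$ by the topological definition of pressure.

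For the lower bound, I would invoke the Anosov closing lemma: given a $(T,\delta)$-separated (indeed $(T,\delta)$-spanning, to get enough points) set $E$, each $x \in E$ can be shadowed by a genuine periodic orbit $\gamma_x$ of period close to $T$ (within a constant $C_0$ depending only on the flow, not on $T$), with the shadowing point within $O(\delta)$ of $x$ for $0 \le t \le T$; by uniform continuity of $\varphi$ this forces $|\int_{\gamma_x}\varphi - \int_0^T\varphi(X^tx)\,dt| \le T\,\omega_\varphi(C\delta) + C_1$ for a modulus of continuity $\omega_\varphi$. After grouping the $x\in E$ that shadow the same orbit (boundedly many per orbit, again by expansiveness, at least among orbits of comparable length), and after a further subdivision of the period window $(T-C_0, T+C_0]$ into unit intervals and passing to the most populous one, one gets $\pi_\varphi(T',\mathbbm{1}_{(0,1]}) \ge c\, e^{-T\omega_\varphi(C\delta) - C_1}\sum_{x\in E}e^{\int_0^T\varphi(X^tx)\,dt}$ for some $T' \in (T-C_0,T+C_0]$ and a constant $c>0$. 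Taking $E$ to realise the separated-set supremum, then $\frac1T\log$, then $\limsup_{T\to\infty}$, then $\delta\to 0$ (so $\omega_\varphi(C\delta)\to 0$), and finally using that $T' - T$ is bounded so the two exponential rates agree, yields $\limsup_{T\to\infty}\frac1T\log\pi_\varphi(T,\mathbbm{1}_{(0,1]}) \ge P(\varphi)$; a parallel argument with $\liminf$ in place of $\limsup$, using that the counts over consecutive unit windows summing over a long range recover the full pressure, upgrades this to the existence of the limit.

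The main obstacle is the lower bound bookkeeping: converting a spanning/separated set for time $T$ into a weighted count of genuine periodic orbits while (a) controlling the multiplicity with which several points of $E$ shadow the same orbit, (b) absorbing the bounded but nonzero discrepancy $T' - T$ in the period and the discrepancy between $\int_{\gamma}\varphi$ and the Birkhoff integral along the orbit of the nearby point, and (c) ensuring the passage from $\limsup$ to a genuine limit — this last point requires a supermultiplicativity-type or renewal argument showing the unit-window counts cannot fluctuate in rate, which follows because $\sum_{\ell(\gamma)\le T}e^{\int_\gamma\varphi} = \sum_{n \le T}\pi_\varphi(n,\mathbbm{1}_{(0,1]})$ grows exactly like $e^{P(\varphi)T}$ up to subexponential factors (from the two-sided bounds above applied in aggregate), which is incompatible with the unit-window terms having oscillating exponential rate given that they are, up to bounded multiplicative error and bounded shifts, comparable across adjacent windows via the closing lemma. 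I would write this last step carefully, as it is exactly the point the cited literature glossed over when $P(\varphi) < 0$.
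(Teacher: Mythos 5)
Your route (topological pressure via separated/spanning sets plus closing of orbit segments) is genuinely different from the paper's, and your upper bound is fine: one point per orbit in $\mathcal P_T$ is a $(T-1,\delta)$-separated set for $\delta$ below the expansivity constant, and the Birkhoff integral over $[0,T-1]$ differs from $\int_\gamma\varphi$ by a bounded amount. But the lower bound has a genuine gap at exactly the point you flag. First, a secondary issue: the Anosov closing lemma only closes orbit segments that nearly return ($d(X^Tx,x)$ small); to close an arbitrary point of a spanning set you need the specification property (available here since weak-mixing transitive Anosov flows are mixing), and even then the resulting period lands only in a window $(T-C_0,T+C_0]$, so pigeonholing gives the bound $\pi_\varphi(T',\mathbbm{1}_{(0,1]})\ge e^{(P(\varphi)-\epsilon)T}$ only for \emph{some} $T'$ near $T$. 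The fatal problem is your proposed upgrade from ``some $T'$'' to ``every $T$'': you argue that the cumulative sum $\sum_{\ell(\gamma)\le T}e^{\int_\gamma\varphi}$ grows like $e^{P(\varphi)T}$ and that this forbids oscillation of the unit-window rate. When $P(\varphi)<0$ the cumulative sum \emph{converges} to a finite constant (this is precisely why $\zeta(s,\varphi)$ converges at $s=0$ when $P(\varphi)<0$), so it carries no lower-bound information on individual windows; a bounded cumulative sum is perfectly compatible with $\pi_\varphi(n,\mathbbm{1}_{(0,1]})$ oscillating between, say, $e^{P(\varphi)n}$ and $e^{2P(\varphi)n}$. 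Since the case $P(\varphi)<0$ is the only one not already covered by Parry's work --- the paper states this is the whole reason the lemma is needed --- your argument breaks down in exactly the regime it is supposed to handle. The vaguer alternative you mention (comparability of adjacent windows via gluing) could in principle be developed into a super-multiplicativity argument, but as written it is not a proof.

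The paper avoids all of this with a short reduction: for Hölder $\varphi$ with $P(\varphi)>0$ the asymptotics of $\sum_{\ell(\gamma)\le T}e^{\int_\gamma\varphi}$ follow from the zeta function's simple pole at $s=P(\varphi)$; for general Hölder $\varphi$ one picks $c>0$ with $P(\varphi)+c>0$ and uses the sandwich
\[
e^{-c(T+b)}\,\pi_{\varphi+c}(T,\mathbbm{1}_{[a,b]})\le \pi_\varphi(T,\mathbbm{1}_{[a,b]})\le e^{-c(T+a)}\,\pi_{\varphi+c}(T,\mathbbm{1}_{[a,b]}),
\]
valid because $\ell(\gamma)\in[T+a,T+b]$ on the support of the sum, together with $P(\varphi+c)=P(\varphi)+c$; continuous $\varphi$ is then handled by uniform approximation (a step your argument would not need, which is a genuine advantage of your approach if the lower bound could be completed). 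I would recommend either adopting this constant-shift trick to eliminate the $P(\varphi)<0$ case from your argument, or writing out the window-comparison/super-multiplicativity step in full.
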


\begin{proof}
We start by assuming that $\varphi$ is H\"older continuous and that $P(\varphi)>0$. The zeta function
\[
\zeta(s,\varphi) = \prod_{\gamma \in \mathcal P} \left(1-e^{-s\ell(\gamma) + \int_\gamma
\varphi}\right)^{-1},
\]
which converges for $\mathrm{Re}(s)>P(\varphi)$, has a non-zero analytic
extension to $\mathrm{Re}(s) \ge P(\varphi)$, apart from a simple pole
at $s=P(\varphi)$ \cite{PP}. The same is true
for the function
\[
\sum_{\gamma \in \mathcal P} e^{-s\ell(\gamma)+\int_\gamma \varphi}
\]
(the proof is an application of Lemma \ref{same_aoc} below).
We can then deduce that
\[
 \sum_{\ell(\gamma) \le T} e^{\int_\gamma \varphi}
\sim \frac{e^{P(\varphi)T}}{P(\varphi)T},
\]
as $T \to \infty$, and hence that
\[
\lim_{T \to \infty} \frac{1}{T} \log \pi_\varphi(T,\mathbbm{1}_{[a,b]]}) =P(\varphi).
\]
For an arbitrary H\"older continuous $\varphi$, choose $c>0$ such that $P(\varphi)+c>0$
and note that
\[
e^{-c(T+b)}\pi_{\varphi+c}(T,\mathbbm{1}_{[a,b]})
\le
\pi_\varphi(T,\mathbbm{1}_{[a,b]})
\le
e^{-c(T+a)} \pi_{\varphi+c}(T,\mathbbm{1}_{[a,b]}).
\]
This gives us
\[
\lim_{T \to \infty} \frac{1}{T} \log \pi_\varphi(T,\mathbbm{1}_{[a,b]]}) =-c +P(\varphi+c)
= P(\varphi).
\]
Finally, if $\varphi$ is only continuous, given $\epsilon>0$, we can find a H\"older continuous function $\varphi'$ with $\|\varphi-\varphi'\|_\infty<\epsilon$, so that
\begin{align*}
P(\varphi) -2\epsilon \le P(\varphi')-\epsilon
&\le
\liminf_{T \to \infty}
\frac{1}{T} \log \pi_\varphi(T,\mathbbm{1}_{[a,b]]})
\\
&\le
\limsup_{T \to \infty}
\frac{1}{T} \log \pi_\varphi(T,\mathbbm{1}_{[a,b]]})
\le P(\varphi')+\epsilon \le P(\varphi)+2\epsilon,
\end{align*}
which gives the required limit.
\end{proof}

\begin{remark}
One can improve the growth rate estimate to
\[
\pi_\varphi(T,\mathbbm{1}_{[a,b]}) \sim \left(\int_a^b e^{P(\varphi)x} \, dx\right) \frac{e^{P(\varphi)T}}{T},
\]
as $T \to \infty$,
using a simplified version of the proof of Theorem \ref{weighted-alpha-asymptotic} below.
\end{remark}

For a compact set $\mathcal K\subset \mathcal M(X)$, write
\[
\Xi_\varphi(T,\mathbbm{1}_{[a,b]},\mathcal K)
= \sum_{\substack{\gamma \in \mathcal P \\ \mu_\gamma \in \mathcal K}}
\mathbbm{1}_{[a,b]}(\ell(\gamma)-T)
\exp\left( \int_\gamma \varphi \right).
\]
The growth rate result in Lemma \ref{lem:growth_of_weighted_per_orbits} implies the following large deviations estimate from \cite{Po1995}, where it appears as Theorem 1.

\begin{theorem}[Pollicott \cite{Po1995}]\label{th:ld}
Let $X^t : M \to M$ be a weak-mixing Anosov flow and let $\varphi : M \to \mathbb R$ 
be a H\"older
continuous function.
Then, for every compact set $\mathcal K \subset \mathcal M(X)$ such that $\mu_{\varphi}
\notin \mathcal K$ and $a>b$, we have
\[
\limsup_{T \to \infty} \frac{1}{T} \log 
\left(\frac{\Xi_\varphi(T,\mathbbm{1}_{[a,b]},\mathcal K)}{\pi_\varphi(T,\mathbbm{1}_{[a,b]})}\right)<0.
\]
\end{theorem}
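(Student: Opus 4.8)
The plan is to show that the numerator $\Xi_\varphi(T,\mathbbm{1}_{[a,b]},\mathcal K)$ grows at a strictly smaller exponential rate than the denominator $\pi_\varphi(T,\mathbbm{1}_{[a,b]})$. Since Lemma~\ref{lem:growth_of_weighted_per_orbits} already gives $\frac{1}{T}\log\pi_\varphi(T,\mathbbm{1}_{[a,b]})\to P(\varphi)$, and since
\[
\frac{1}{T}\log\frac{\Xi_\varphi(T,\mathbbm{1}_{[a,b]},\mathcal K)}{\pi_\varphi(T,\mathbbm{1}_{[a,b]})}
= \frac{1}{T}\log\Xi_\varphi(T,\mathbbm{1}_{[a,b]},\mathcal K) - \frac{1}{T}\log\pi_\varphi(T,\mathbbm{1}_{[a,b]}),
\]
it suffices to prove $\limsup_{T\to\infty}\frac{1}{T}\log\Xi_\varphi(T,\mathbbm{1}_{[a,b]},\mathcal K) < P(\varphi)$ for every compact $\mathcal K\subset\mathcal M(X)$ with $\mu_\varphi\notin\mathcal K$. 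This is the core of the large deviation argument of Pollicott~\cite{Po1995}, following Kifer~\cite{kifer}; the point of the present proof is that the only input that argument needed beyond standard thermodynamic formalism was precisely the growth estimate of Lemma~\ref{lem:growth_of_weighted_per_orbits}, which we have now established for all H\"older continuous $\varphi$, including the regime $P(\varphi)\le 0$ previously left open.

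Next I would reduce the weighted orbit count to a weighted count of separated points. Since $\mathcal M(X)$ is compact and metrizable and $\mu_\varphi\notin\mathcal K$, fix a weak$^*$ neighbourhood $\mathcal N\supset\mathcal K$ whose closure $\overline{\mathcal N}$ is compact and still excludes $\mu_\varphi$. Given $\gamma\in\mathcal P$ with $\ell(\gamma)-T\in[a,b]$, choose $x_\gamma\in\gamma$, set $S=T+a$, and write $\mu_{x,S}$ for the empirical measure $S^{-1}\int_0^S\delta_{X^tx}\,dt$. Because $|\ell(\gamma)-S|\le b-a$ is bounded, $\mu_\gamma$ and $\mu_{x_\gamma,S}$ differ by $O(1/T)$ in the weak$^*$ metric uniformly in $\gamma$, so for $T$ large $\mu_\gamma\in\mathcal K$ forces $\mu_{x_\gamma,S}\in\mathcal N$; likewise $\left|\int_\gamma\varphi-\int_0^S\varphi(X^tx_\gamma)\,dt\right|\le (b-a)\|\varphi\|_\infty$. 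Finally, by standard consequences of the Markov coding of the Anosov flow (equivalently, of its expansiveness) there is a fixed $\delta_0>0$ such that, for $T$ large, the points $x_\gamma$ arising from distinct orbits $\gamma$ with $\ell(\gamma)-T\in[a,b]$ form an $(S,\delta_0)$-separated set. Combining these,
\[
\Xi_\varphi(T,\mathbbm{1}_{[a,b]},\mathcal K)
\le e^{(b-a)\|\varphi\|_\infty}\,
\sup\left\{\sum_{x\in E}e^{\int_0^S\varphi(X^tx)\,dt}\;:\; E\ (S,\delta_0)\text{-separated},\ \mu_{x,S}\in\mathcal N\ \forall x\in E\right\},
\]
with $S=T+a$.

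Then I would apply the restricted variational bound and conclude. The key estimate, which is the heart of Kifer's upper bound, is that for the closed set $\overline{\mathcal N}$,
\[
\limsup_{S\to\infty}\frac{1}{S}\log\sup\left\{\sum_{x\in E}e^{\int_0^S\varphi(X^tx)\,dt}\;:\; E\ (S,\delta_0)\text{-separated},\ \mu_{x,S}\in\overline{\mathcal N}\right\}
\le \sup_{\nu\in\overline{\mathcal N}}\left(h(\nu)+\int\varphi\,d\nu\right),
\]
where expansiveness of the flow is what lets the separation scale $\delta_0$ stay fixed rather than be sent to $0$. By Lemma~\ref{entropyusc} the functional $\nu\mapsto h(\nu)+\int\varphi\,d\nu$ is upper semi-continuous on the compact set $\mathcal M(X)$, so it attains its supremum on the compact set $\overline{\mathcal N}$; but its unique global maximiser is $\mu_\varphi\notin\overline{\mathcal N}$, so this restricted supremum equals $P(\varphi)-2\eta$ for some $\eta>0$. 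Since the factor $e^{(b-a)\|\varphi\|_\infty}$ and the replacement of $T$ by $S=T+a$ both disappear after dividing by $T$ and letting $T\to\infty$, this yields $\limsup_{T\to\infty}\frac{1}{T}\log\Xi_\varphi(T,\mathbbm{1}_{[a,b]},\mathcal K)\le P(\varphi)-2\eta$, and feeding this into the first step gives $\limsup_{T\to\infty}\frac{1}{T}\log\left(\Xi_\varphi(T,\mathbbm{1}_{[a,b]},\mathcal K)/\pi_\varphi(T,\mathbbm{1}_{[a,b]})\right)\le -2\eta<0$.

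The main obstacle is the displayed restricted variational inequality: that a weighted count of $(S,\delta_0)$-separated orbits whose empirical distributions are confined to a closed set $\mathcal F$ grows no faster than $\exp\!\left(S\sup_{\nu\in\mathcal F}(h(\nu)+\int\varphi\,d\nu)\right)$. This is the level-$2$ upper bound in Kifer's large deviation theorem for the flow; it is proved by adapting the Misiurewicz-type proof of the variational principle for pressure so as to carry along the empirical-measure constraint — sorting the points of $E$ by which cell of a fine partition of $\mathcal M(X)$ their empirical measure falls into, estimating the weighted count in each cell via concavity of entropy and convexity of $t\mapsto t\log t$, and passing to the limit using the upper semi-continuity of entropy (Lemma~\ref{entropyusc}), with expansiveness keeping the scale fixed. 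All of this is carried out in \cite{Po1995} and \cite{kifer}, and the one ingredient of that treatment not previously available in full generality — the bound $\liminf_{T}\frac{1}{T}\log\pi_\varphi(T,\mathbbm{1}_{[a,b]})\ge P(\varphi)$ when $P(\varphi)\le 0$ — is now supplied by Lemma~\ref{lem:growth_of_weighted_per_orbits}. The theorem follows.
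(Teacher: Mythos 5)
Your argument is correct in outline, but it takes a genuinely different route from the one the paper relies on. The paper does not prove this statement in situ: it quotes it as Theorem 1 of \cite{Po1995}, supplying Lemma \ref{lem:growth_of_weighted_per_orbits} only to close the previously missing case $P(\varphi)\le 0$; the template it actually writes out (for the homology-restricted analogue in Section \ref{section:equi-null}) is Kifer's convex-duality argument. There one fixes a continuous test function $\psi$, bounds the contribution of orbits with $\int\psi\,d\mu_\gamma$ large by an exponential Chebyshev inequality against the full sum $\sum_\gamma e^{\int_\gamma(\varphi+\psi)}$ (whose growth rate $P(\varphi+\psi)$ is again Lemma \ref{lem:growth_of_weighted_per_orbits}, applied to the merely continuous weight $\varphi+\psi$ --- this is why that lemma is stated for continuous functions), covers the compact set $\mathcal K$ by finitely many such half-spaces, and obtains $\limsup_T\frac1T\log\bigl(\Xi_\varphi/\pi_\varphi\bigr)\le-\inf_{\nu\in\mathcal K}\bigl(P(\varphi)-h(\nu)-\int\varphi\,d\nu\bigr)$; strict negativity then follows from Lemma \ref{entropyusc} (which both identifies this rate function via the dual characterisation of entropy and gives its lower semi-continuity) together with uniqueness of $\mu_\varphi$. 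That route is self-contained given the paper's lemmas. Your route instead goes through the level-2 empirical-measure upper bound for $(S,\delta_0)$-separated sets --- the Misiurewicz-type restricted variational inequality $\limsup_S\frac1S\log\sup\{\cdots\}\le\sup_{\nu\in\overline{\mathcal N}}(h(\nu)+\int\varphi\,d\nu)$ --- which you correctly isolate as the heart of the matter but do not prove, deferring to \cite{kifer} and \cite{Po1995}. That inequality is heavier machinery than the Chebyshev bound and is the one step of your proof not reducible to lemmas established in this paper; everything surrounding it (the choice of $\mathcal N$ with $\mu_\varphi\notin\overline{\mathcal N}$, the expansivity-based separation of one point per orbit, the $O(1/T)$ comparison of $\mu_\gamma$ with $\mu_{x_\gamma,S}$ and of $\int_\gamma\varphi$ with $\int_0^S\varphi(X^tx_\gamma)\,dt$, the strict gap below $P(\varphi)$ via upper semi-continuity and uniqueness, and the division by the denominator using Lemma \ref{lem:growth_of_weighted_per_orbits}) is sound. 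In short: your proof works and is faithful to Kifer's original method, but the duality argument of Section \ref{section:equi-null} with $\alpha$ and $f_\xi$ deleted is the shorter, self-contained path the paper intends.
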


Theorem \ref{weighted-nonnull-equi} then follows from this. We omit the proof as it is almost identical to the proof that Theorem \ref{th:ld} implies Theorem \ref{weighted-equi} below.

\begin{remark}
Theorem \ref{weighted-nonnull-equi} also holds more generally for hyperbolic flows (with the same proof).
If $X^t$ is not weak-mixing then the periods $\ell(\gamma)$ are all integer multiples of some
$c>0$ and the result holds in this case provided $b-a\ge c$.
\end{remark}

\section{Pressure and cohomology}\label{section:therm-yoga}

Let $X^t : M \to M$ be a homologically full transitive Anosov flow on a manifold $M$ whose first Betti number $b = \dim H_1(M,\mathbb R)$ is at least $1$. Let 
$\varphi : M \to \mathbb R$ be a H\"older continuous function.
In this section, we define a pressure function on the cohomology group 
$H^1(M,\mathbb R)$ that will allow us to identify the growth rate of periodic orbits in a fixed homology 
class weighted by $\varphi$.
(This generalises results in \cite{Sh93} which were restricted to the case $\varphi=0$.)

For a closed $1$-form $\omega$ on $M$, let $f_\omega : M \to \mathbb R$ denote the function
$f_\omega = \omega(X)$, i.e. for $x \in M$, $f_\omega(x) = \omega(X(x))$. If $\omega'$ is in the same cohomology class then, as above,
$\omega - \omega' =d\theta$ and, for any periodic orbit $\gamma$,
\begin{align*}
\int_\gamma f_\omega -\int_\gamma f_{\omega'} 
&= \int_\gamma d\theta(X) = \int_\gamma L_X \theta =0,
\end{align*}
so, by Lemma \ref{lem:livsic}, $f_\omega$ and $f_{\omega'}$ are $X$-cohomologous. Thus, 
we can define
$f_{[\omega]}$, where $[\omega]$ is the cohomology class of $\omega$,
as a function on $M$ up to $X$-cohomology. In particular, this is sufficient for us to use 
$f_{[\omega]}$ to define a pressure function.

Define $\beta_\varphi : H^1(M,\mathbb R) \to \mathbb R$ by
\[
\beta_\varphi([\omega]) = P(\varphi + f_{[\omega]}).
\]
The next result identifies the growth rate we require and the minimum of $\beta_\varphi$.

\begin{proposition}\label{strictly_convex_and_finite_min} Let $\varphi : M \to \mathbb R$ be H\"older continuous.
Then $\beta_\varphi$ is strictly convex and
 there exists a unique $\xi(\varphi) \in H^1(M,\mathbb R)$
such that
\[
\beta_\varphi(\xi(\varphi)) = \inf_{[\omega] \in H^1(M,\mathbb R)} \beta_\varphi([\omega]).
\]
Furthermore, $\mu_{\varphi +f_{\xi(\varphi)}}$ is the unique probability measure satisfying
\[
h(\mu_{\varphi +f_{\xi(\varphi)}}) + \int \varphi \, d\mu_{\varphi +f_{\xi(\varphi)}}
= \sup \left\{h(\nu) + \int \varphi \, d\nu \hbox{ : } \nu \in \mathcal M(X), \ \Phi_\nu=0\right\}.
\]
\end{proposition}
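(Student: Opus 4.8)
The plan is to read off all three assertions from the convexity/analyticity properties of pressure on the finite-dimensional space $H^1(M,\mathbb R)\cong\mathbb R^b$, together with the characterisation of homological fullness via winding cycles (Proposition \ref{prop:equiv_to_hom_full}(iii)). First I would fix a basis $[\omega_1],\dots,[\omega_b]$ of $H^1(M,\mathbb R)$ and the dual basis of $H_1(M,\mathbb R)$, so that $\beta_\varphi$ becomes the function $(t_1,\dots,t_b)\mapsto P(\varphi+t_1 f_{[\omega_1]}+\cdots+t_b f_{[\omega_b]})$ on $\mathbb R^b$. By Lemma \ref{convexity_of_pressure} this map is real-analytic and convex, and it is \emph{strictly} convex because no nontrivial combination $a_1 f_{[\omega_1]}+\cdots+a_b f_{[\omega_b]}=f_{[a_1\omega_1+\cdots+a_b\omega_b]}$ is $X$-cohomologous to a constant: if it were, then for every $\nu\in\mathcal M(X)$ the pairing $\langle\Phi_\nu,[a_1\omega_1+\cdots]\rangle=\int f_{[\cdots]}\,d\nu$ would be constant in $\nu$, forcing the set $\{\Phi_\nu\}$ to lie in an affine hyperplane, contradicting $0\in\mathrm{int}\{\Phi_\nu\}$ from Proposition \ref{prop:equiv_to_hom_full}(iii) (unless the combination is trivial). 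Since $f_{[\omega]}$ is only defined up to $X$-cohomology, $\beta_\varphi$ is well-defined, and adding a constant to $\varphi$ just shifts $\beta_\varphi$ by that constant, so strict convexity is genuinely a property of the cohomology variable.

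Next I would show $\beta_\varphi$ attains its infimum. Compute the gradient: by Lemma \ref{convexity_of_pressure}, $\partial\beta_\varphi/\partial t_i$ at a point corresponding to $[\omega]$ equals $\int f_{[\omega_i]}\,d\mu_{\varphi+f_{[\omega]}}=\langle\Phi_{\mu_{\varphi+f_{[\omega]}}},[\omega_i]\rangle$, i.e. $\nabla\beta_\varphi([\omega])$ is the winding cycle $\Phi_{\mu_{\varphi+f_{[\omega]}}}$ viewed in $H_1(M,\mathbb R)=H^1(M,\mathbb R)^*$. A strictly convex real-analytic function on $\mathbb R^b$ has a (unique) minimum iff it is coercive; coercivity follows because as $[\omega]\to\infty$ along a ray in direction $(a_1,\dots,a_b)\ne 0$, one has $P(\varphi+t f_{[a\omega]})\ge h(\nu)+\int\varphi\,d\nu + t\langle\Phi_\nu,[a\omega]\rangle$ for every $\nu$, and choosing $\nu$ with $\langle\Phi_\nu,[a\omega]\rangle>0$ (possible since $0$ is interior to $\{\Phi_\nu\}$, so the set $\{\Phi_\nu\}$ is not contained in any closed half-space through $0$) makes the right side tend to $+\infty$. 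Hence there is a unique $\xi(\varphi)\in H^1(M,\mathbb R)$ with $\nabla\beta_\varphi(\xi(\varphi))=0$, i.e. $\Phi_{\mu_{\varphi+f_{\xi(\varphi)}}}=0$, and this is the global minimum.

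Finally, for the variational characterisation: write $\mu^*:=\mu_{\varphi+f_{\xi(\varphi)}}$. Since $\Phi_{\mu^*}=0$, $\mu^*$ is admissible in the supremum on the right-hand side, and because $f_{\xi(\varphi)}$ pairs to $\langle\Phi_\nu,\xi(\varphi)\rangle=0$ on every $\nu$ with $\Phi_\nu=0$, for all such $\nu$ we have
\[
h(\nu)+\int\varphi\,d\nu = h(\nu)+\int(\varphi+f_{\xi(\varphi)})\,d\nu \le P(\varphi+f_{\xi(\varphi)})=\beta_\varphi(\xi(\varphi)),
\]
with equality for $\nu=\mu^*$ since $\mu^*$ is the equilibrium state of $\varphi+f_{\xi(\varphi)}$. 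So $\mu^*$ realises the constrained supremum. Uniqueness: any maximiser $\nu$ also satisfies the displayed inequality with equality, hence is an equilibrium state for $\varphi+f_{\xi(\varphi)}$, which by uniqueness of equilibrium states for Hölder potentials (Section \ref{section:pressure}) forces $\nu=\mu^*$; here one should note $\varphi+f_{\xi(\varphi)}$ is Hölder because $f_\omega=\omega(X)$ is Hölder for a smooth closed form $\omega$.

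The main obstacle is establishing coercivity of $\beta_\varphi$ cleanly — one must extract from "$0\in\mathrm{int}\{\Phi_\nu:\nu\in\mathcal M(X)\}$" that along every ray the pressure grows at least linearly, which amounts to knowing that $\{\Phi_\nu\}$ spans $H_1(M,\mathbb R)$ and has $0$ in its interior, so that every nonzero linear functional on it is somewhere strictly positive; packaging this with the pressure lower bound $P(\psi)\ge h(\nu)+\int\psi\,d\nu$ is the only non-formal part. Everything else is bookkeeping with Lemma \ref{convexity_of_pressure} and the definition of winding cycle.
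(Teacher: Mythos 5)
Your proposal is correct and follows the same overall architecture as the paper's proof (coordinatise $H^1(M,\mathbb R)\cong\mathbb R^b$, get strict convexity from Lemma \ref{convexity_of_pressure}, establish a finite minimum, then read off the constrained variational principle from $\Phi_{\mu_{\varphi+f_{\xi(\varphi)}}}=0$ and uniqueness of equilibrium states), but two sub-steps are handled differently. For strict convexity, the paper rules out $a_1f_1+\cdots+a_bf_b$ being cohomologous to a constant by showing the constant would be zero and then contradicting Proposition \ref{prop:periodic_orbits_generate_homology} (periodic orbit classes would lie in a hyperplane); you instead contradict $0\in\mathrm{int}\{\Phi_\nu\}$ from Proposition \ref{prop:equiv_to_hom_full}(iii), since a set with nonempty interior cannot lie in an affine hyperplane --- both are valid and essentially equivalent in this setting. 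For existence of the minimum, the paper takes a shortcut: it cites \cite{Sh93} for coercivity of $\beta_0$ and observes $|\beta_\varphi-\beta_0|\le\|\varphi\|_\infty$, whereas you reprove coercivity directly from the variational principle $P(\psi)\ge h(\nu)+\int\psi\,d\nu$ and the interior-point condition. Your version is more self-contained but, as you note, needs the passage from ``linear growth along each ray'' to genuine coercivity to be made uniform over directions; this is easily done (the image $\{\Phi_\nu\}$ is compact convex with $0$ in its interior, so it contains a ball $B(0,\epsilon_0)$, giving $P(\varphi+f_{[\omega]})\ge -\|\varphi\|_\infty+\epsilon_0\|[\omega]\|$ uniformly), and is presumably the content of the argument in \cite{Sh93} that the paper invokes. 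Your explicit treatment of uniqueness of the maximising measure (via uniqueness of equilibrium states) is a point the paper leaves implicit.
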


\begin{proof}
Fix a basis $c_1,\ldots,c_b$ for the free $\mathbb Z$-module $H_1(M,\mathbb Z)/\mathrm{Tor}$
(regarded as a lattice in $H_1(M,\mathbb R)$) and 
take the dual basis 
$w_1,\ldots,w_b$
for $H^1(M,\mathbb R)$, i.e.
\[
\langle c_i ,w_j \rangle = \delta_{ij},
\]
where $\delta_{ij}$ is the Kronecker symbol.

Write 
$f_i = f_{w_i}$, $i=1,\ldots,b$. Then we can regard $\beta_\varphi$ as a function
$\beta_\varphi : \mathbb R^b \to \mathbb R$ given by
\[
\beta_\varphi(t) = P(\varphi +t_1f_1 + \cdots +t_bf_b).
\]
By Lemma \ref{convexity_of_pressure}, this function 
is strictly convex unless there is a non-zero $a=(a_1,\ldots,a_b) \in \mathbb R^b$ such that
$a_1f_1 + \cdots a_bf_b$ is $X$-cohomologous to a constant. Suppose there is such a
non-zero vector $a$. Since the flow is homologically 
full, we can find $\mu \in \mathcal M(X)$ with $\Phi_\mu=0$, which is equivalent to
$\int f_i \, d\mu=0$, $i=1,\ldots,b$, and therefore $a_1f_1 + \cdots +a_bf_b$ is $X$-cohomologous to zero. In particular, for a prime periodic orbit $\gamma$, $$a_1\int_\gamma f_1 + \cdots +a_b\int_\gamma f_b=0.$$ Also, by definition of $f_i$, the real homology class of $\gamma$  is given by $$\left(\int_\gamma f_1,\ldots, \int_\gamma f_b\right).$$  Thus the homology of each periodic orbit 
is constrained to lie in the hyperplane
$a_1x_1 + \cdots +a_bx_b=0$ in $H_1(M,\mathbb R) \cong \mathbb R^b$. This contradicts
Proposition \ref{prop:periodic_orbits_generate_homology},
that the homology classes of periodic orbits generate $H_1(M,\mathbb Z)$ as a group.
So $\beta_\varphi$ is strictly convex.

We now show that $\beta_\varphi$ has a finite minimum. Since $\beta_\varphi$ is strictly convex,
this minimum will automatically be unique. Since the flow is homologically full, it follows from \cite{Sh93} that
$\beta_0$ is strictly convex and has a finite minimum. Noting that
\[
|\beta_\varphi(t) - \beta_0(t)| \le \|\varphi\|_\infty,
\]
we see that $\beta_\varphi$ also has a finite minimum. 
We call the point where the minimum occurs $\xi(\varphi)$. 
Clearly, $\nabla \beta_\varphi(\xi(\varphi))=0$.

 Writing $f=(f_1,\ldots,f_b)$, we have
\[
\nabla \beta_\varphi(t) = \int f \, d\mu_{\varphi + t_1f_1 + \cdots + t_bf_b},
\]
so
\[
\int f \, d\mu_{\varphi + f_{\xi(\varphi)}} = \nabla \beta_\varphi(\xi(\varphi))=0.
\]
In terms of the winding cycle, this gives $\Phi_{\mu_{\varphi + f_{\xi(\varphi)}}} =0$.
Suppose $\nu$ is an arbitrary measure in $\mathcal M(X)$ with $\Phi_\nu=0$. Then $\int f_{\xi(\varphi)} \, d\nu=0$ and, using the definition of
equilibrium state,
\begin{align*}
h(\nu) + \int \varphi \, d\nu &= h(\nu) + \int (\varphi + f_{\xi(\varphi)}) \, d\nu \\
&\le h(\mu_{\varphi +f_{\xi(\varphi)}}) + \int (\varphi + f_{\xi(\varphi)})\, d\mu_{\varphi +f_{\xi(\varphi)}}
\\
&= h(\mu_{\varphi +f_{\xi(\varphi)}}) + \int \varphi \, d\mu_{\varphi +f_{\xi(\varphi)}}.
\end{align*}
Thus,
\[
h(\mu_{\varphi +f_{\xi(\varphi)}}) + \int \varphi \, d\mu_{\varphi +f_{\xi(\varphi)}}
= \sup \left\{h(\nu) + \int \varphi \, d\nu \hbox{ : } \nu \in \mathcal M(X), \ \Phi_\nu=0\right\},
\]
as required.
\end{proof}

Finally, we note that it is part of the standard theory of the pressure function for hyperbolic flows that
the exponential of the function $t \mapsto \beta_\varphi(t)$ has an analytic extension
\[
\mathcal D \to \mathbb C : s \mapsto \exp \beta_\varphi(s),
\]
where $\mathcal D$ is a neighbourhood of $\mathbb R^b$ in $\mathbb C^b$
(see, for example, the discussions in Section 1 of \cite{KS} or Section 2 of \cite{Sh92}).

\section{Equidistribution of null-homologous orbits}\label{section:equi-null}

\subsection{Weighted asymptotics for orbits in a fixed homology class}
Let $\varphi : M \to \mathbb R$ be H\"older continuous and let
$\beta_\varphi$ and $\xi(\varphi)$ be defined as in Section \ref{section:therm-yoga}.
To lighten the notation, we shall write
\[
\xi = \xi(\varphi) \quad \mbox{and} \quad \beta = \beta_\varphi(\xi(\varphi)).
\]

For $\alpha \in H_1(M,\mathbb Z)/\mathrm{Tor}$,
a compactly supported function $g : \mathbb R \to \mathbb R$
and $T>0$, write
\[
\pi_\varphi(T,\alpha,g) = \sum_{\gamma \in \mathcal P(\alpha)} g(\ell(\gamma)-T)
\exp\left( \int_\gamma \varphi \right).
\]
We have the following asymptotic formula.

\begin{theorem}\label{weighted-alpha-asymptotic}
Let $X^t : M \to M$ be a homologically full transitive Anosov flow and let $\varphi : M \to \mathbb R$ 
be a H\"older
continuous function. Then, for every compactly supported continuous function
$g : \mathbb R \to \mathbb R$, we have
\[
\pi_\varphi(T,\alpha,g) \sim \frac{1}{(2\pi)^{b/2}\sqrt{\det \nabla^2 \beta_\varphi(\xi)}} 
\left(\int_{-\infty}^\infty e^{\beta x} g(x) \, dx\right)
e^{-\langle \alpha,\xi \rangle}
\frac{e^{\beta T}}{T^{1+b/2}},
\]
as $T \to \infty$.
In particular,
\[
\lim_{T \to \infty} \frac{1}{T} \log \pi_\varphi(T,\alpha,g) = \beta.
\]
\end{theorem}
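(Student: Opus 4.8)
The plan is to obtain the asymptotic for $\pi_\varphi(T,\alpha,g)$ by Fourier analysis on the torus $H^1(M,\mathbb R)/(\text{dual lattice})$, mimicking the classical argument of Katsuda--Sunada and Sharp for the unweighted case but now carrying the weight $e^{\int_\gamma\varphi}$ throughout. First I would introduce, for $\theta\in\mathbb R^b$ (thought of as a cohomology parameter), the two-variable complex function
\[
\eta(s,\theta)=\sum_{\gamma\in\mathcal P}e^{-s\ell(\gamma)+\int_\gamma\varphi+i\langle[\gamma],\theta\rangle},
\]
which for fixed $\theta$ is, up to a factor with no pole on the relevant line, the logarithmic-derivative-type series attached to the twisted zeta function $\zeta(s,\varphi+f_{[\omega_\theta]}+i(\cdots))$; by the standard theory quoted at the end of Section~\ref{section:therm-yoga} its behaviour near the critical line is governed by $\exp\beta_\varphi$ extended to a complex neighbourhood, with a pole precisely where $s=\beta_{\varphi}(\text{the cohomology class encoded by }\theta)$ after a contour shift. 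The key analytic input is that for $\theta$ away from $0$ (mod the lattice) there is no pole on $\mathrm{Re}(s)=\beta$, while near $\theta=0$ the pole location is $s=\beta_\varphi(\xi)+\tfrac12\langle\nabla^2\beta_\varphi(\xi)\,\theta,\theta\rangle+O(|\theta|^3)$, using $\nabla\beta_\varphi(\xi)=0$ and the strict convexity (hence positive-definite Hessian) established in Section~\ref{section:therm-yoga}.

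Next I would pass from the Dirichlet-series asymptotics to a counting statement. Using the orthogonality relation on the $b$-torus,
\[
\mathbbm 1_{[\gamma]=\alpha}=\frac{1}{(2\pi)^b}\int_{[-\pi,\pi]^b}e^{i\langle[\gamma]-\alpha,\theta\rangle}\,d\theta,
\]
one writes $\pi_\varphi(T,\alpha,g)$ as an integral over $\theta$ of a weighted counting function controlled by $\eta(s,\theta)$. An Ikehara/Tauberian argument (or, to handle the smoothing function $g$ cleanly, a Fourier-transform version: multiply by $\hat g$ and integrate, as in the proof of the corresponding flow results in \cite{parry-equi}, \cite{Sh93}) converts the pole at $s=\beta_\varphi(\theta)$ into a main term of size $e^{\beta_\varphi(\theta)T}$. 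For $\theta$ bounded away from $0$ this is exponentially smaller than $e^{\beta T}$ and contributes negligibly; the dominant contribution comes from a shrinking neighbourhood $|\theta|\lesssim T^{-1/2}$ of the origin, where after the substitution $\theta=u/\sqrt T$ the factor $e^{\beta_\varphi(\theta)T}=e^{\beta T}e^{\frac12\langle\nabla^2\beta_\varphi(\xi)u,u\rangle}(1+o(1))$, the twist $e^{-i\langle\alpha,\theta\rangle}\to1$, and the Jacobian $(\sqrt T)^{-b}$ produces the $T^{-b/2}$; a residual $T^{-1}$ from the Tauberian step gives the stated $T^{-1-b/2}$. Evaluating the resulting Gaussian integral $\int_{\mathbb R^b}e^{-\frac12\langle(-\nabla^2\beta_\varphi(\xi))u,u\rangle}\,du=(2\pi)^{b/2}/\sqrt{\det\nabla^2\beta_\varphi(\xi)}$ and keeping the scalar factor $\int e^{\beta x}g(x)\,dx$ from the smoothing yields exactly the claimed constant, and the factor $e^{-\langle\alpha,\xi\rangle}$ arises from shifting the contour/cohomology base point from $0$ to $\xi$ (equivalently, replacing $\varphi$ by $\varphi+f_\xi$, under which $\int_\gamma\varphi\mapsto\int_\gamma\varphi+\langle[\gamma],\xi\rangle$, so the normalisation at $\alpha$ picks up $e^{-\langle\alpha,\xi\rangle}$). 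The final displayed limit $\tfrac1T\log\pi_\varphi(T,\alpha,g)\to\beta$ is then immediate.

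I expect the main obstacle to be the uniformity of the analytic continuation and pole estimates in the parameter $\theta$ near the critical line: one must show that the twisted zeta function $\zeta(s,\varphi+f_{[\omega_\theta]}+\text{imaginary part})$ has the required non-vanishing and meromorphic extension to $\mathrm{Re}(s)\ge\beta-\delta$ with bounds locally uniform in $\theta$, and that the only pole on $\mathrm{Re}(s)=\beta$ over all $\theta\in[-\pi,\pi]^b$ sits at $\theta=0$ — this uses the weak-mixing property (automatic here since a homologically full flow is not a suspension, by Section~\ref{section:anosov}) to rule out pole lines away from $\theta=0$, exactly as in \cite{Sh93}, but now the weighting $\varphi$ must be carried through the symbolic-dynamics/transfer-operator estimates. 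Granting the $\varphi=0$ case from \cite{Sh93} and the convexity/analyticity package of Lemma~\ref{convexity_of_pressure} and the end of Section~\ref{section:therm-yoga}, the extra work is bookkeeping: replacing the roof-function twist by the twist coming from $q_\varphi+i\langle(\cdot),\theta\rangle$ in the symbolic model, which leaves all the spectral-gap perturbation arguments structurally unchanged. A secondary technical point is justifying the exchange of the $\theta$-integral with the Tauberian limit; this is handled by the usual device of working with the smoothed count (Fourier side) so that all objects are integrable and dominated convergence applies, after which one removes the smoothing by a standard approximation argument, valid for any compactly supported continuous $g$ as in the statement.
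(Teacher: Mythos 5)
Your proposal follows essentially the same route as the paper: the paper also proceeds by Fourier analysis over the homology torus applied to the twisted series $Z(s,1,\xi+iv)=\sum_{\gamma}e^{-s\ell(\gamma)+\int_\gamma\varphi+\langle[\gamma],\xi+iv\rangle}$, with the singularity at $s=\beta_\varphi(\xi+iv)$, the shift of the base point to the critical class $\xi$ producing the factor $e^{-\langle\alpha,\xi\rangle}$, and the saddle-point/Gaussian computation at $v=0$ producing the Hessian determinant and the $T^{-1-b/2}$ --- the only real difference in implementation being that the paper imports the Tauberian step from Babillot--Ledrappier, which exploits the \emph{logarithmic} singularity of $Z$ (the prime-orbit sum behaves like $\log\zeta$, not like $-\zeta'/\zeta$, so there is no pole) and needs only locally integrable boundary values, rather than the Ikehara-with-a-pole framing you describe from Katsuda--Sunada/Sharp. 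One sign slip to fix: positive definiteness of $\nabla^2\beta_\varphi(\xi)$ gives $\beta_\varphi(\xi+i\theta)=\beta-\tfrac12\langle\nabla^2\beta_\varphi(\xi)\theta,\theta\rangle+O(|\theta|^3)$, so the limiting Gaussian is $e^{-\frac12\langle\nabla^2\beta_\varphi(\xi)u,u\rangle}$ (as written, your integral diverges), although this does not affect the constant you ultimately state.
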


Applying a simple approximation argument, we immediately obtain the following
corollary. We shall see later in the section that this implies the equidistribution result we seek.

\begin{corollary}\label{cor:growth_of_weighted_null_hom}
\[
\lim_{T \to \infty} \frac{1}{T} \log \pi_\varphi(T,\alpha,\mathbbm{1}_{[a,b]}) = \beta
\]
and the same holds if we replace $[a,b]$ by $(a,b)$, $(a,b]$ or $[a,b)$.
\end{corollary}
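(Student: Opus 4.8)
The plan is to deduce Corollary \ref{cor:growth_of_weighted_null_hom} from Theorem \ref{weighted-alpha-asymptotic} by a sandwiching argument, approximating the indicator function $\mathbbm{1}_{[a,b]}$ from above and below by compactly supported continuous functions to which the theorem applies directly. Since all four interval types $[a,b]$, $(a,b)$, $(a,b]$, $[a,b)$ differ only on a set of endpoints (two points), and the counting sums $\pi_\varphi(T,\alpha,\cdot)$ will turn out to have a well-understood growth rate, the contribution of the endpoints is negligible; so it suffices to treat, say, the closed interval $[a,b]$, and the other three cases follow verbatim.

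First I would fix $\epsilon>0$ and choose continuous compactly supported functions $g_-,g_+ : \mathbb R \to \mathbb R$ with $0 \le g_- \le \mathbbm{1}_{[a,b]} \le g_+ \le 1$, where $g_-$ is supported in $[a,b]$ and equal to $1$ on $[a+\epsilon,b-\epsilon]$, and $g_+$ is supported in $[a-\epsilon,b+\epsilon]$ and equal to $1$ on $[a,b]$ (for $\epsilon$ small enough that $a+\epsilon < b-\epsilon$). Since all the weights $\exp(\int_\gamma\varphi)$ are positive and $g_- \le \mathbbm{1}_{[a,b]} \le g_+$ pointwise, monotonicity of the sums gives
\[
\pi_\varphi(T,\alpha,g_-) \le \pi_\varphi(T,\alpha,\mathbbm{1}_{[a,b]}) \le \pi_\varphi(T,\alpha,g_+)
\]
for all $T>0$. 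Applying Theorem \ref{weighted-alpha-asymptotic} to $g_-$ and $g_+$, the outer two quantities are each asymptotic to a positive constant multiple of $e^{\beta T}/T^{1+b/2}$ (positivity of the leading constant uses that $\int e^{\beta x} g_\pm(x)\,dx > 0$, which holds as soon as $\epsilon$ is small enough that $g_\pm$ is not identically zero on an interval of positive length, together with $\det \nabla^2\beta_\varphi(\xi) > 0$ from Lemma \ref{convexity_of_pressure} and strict convexity of $\beta_\varphi$ established in Section \ref{section:therm-yoga}). Taking logarithms, dividing by $T$, and letting $T\to\infty$ then yields
\[
\beta \le \liminf_{T\to\infty}\frac{1}{T}\log\pi_\varphi(T,\alpha,\mathbbm{1}_{[a,b]})
\le \limsup_{T\to\infty}\frac{1}{T}\log\pi_\varphi(T,\alpha,\mathbbm{1}_{[a,b]}) \le \beta,
\]
since the $T^{-(1+b/2)}$ and constant factors contribute nothing after dividing by $T$. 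This gives the claimed limit for $[a,b]$.

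For the remaining three interval types, the same sandwich works: each of $(a,b)$, $(a,b]$, $[a,b)$ lies between the same $g_-$ and $g_+$ (indeed one can reuse $g_-$ supported inside $(a,b)$), so the identical argument applies. The only mild subtlety — and the one step I would be most careful about — is ensuring the leading constant in the asymptotic for $g_\pm$ is genuinely nonzero, i.e. that $\int_{-\infty}^{\infty} e^{\beta x} g_\pm(x)\,dx \neq 0$; this is immediate once we pick $g_\pm \ge 0$ not identically zero, so there is no real obstacle here, merely bookkeeping. The case $b=0$ (trivial first Betti number) is excluded from the setup of Section \ref{section:therm-yoga}, so no separate treatment is needed. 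Overall this is a routine approximation, and I expect the ``hard work'' to reside entirely in Theorem \ref{weighted-alpha-asymptotic} itself rather than in this corollary.
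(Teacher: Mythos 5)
Your sandwich argument — approximating $\mathbbm{1}_{[a,b]}$ from above and below by nonnegative continuous compactly supported functions, applying Theorem \ref{weighted-alpha-asymptotic} to each, and noting that the polynomial and constant factors vanish after taking $\frac{1}{T}\log$ — is exactly the ``simple approximation argument'' the paper invokes to deduce the corollary. The proof is correct and coincides with the paper's intended route.
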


We proceed with the proof of Theorem \ref{weighted-alpha-asymptotic},
following the analysis of \cite{bab-led}.
For $p \in \mathbb R$, $\delta_p$ denotes the Dirac measure giving mass $1$ to $p$.
For $\varsigma \in \mathbb R$,
define measures $\mathfrak M_{T,\alpha,\varphi,\varsigma}$ on $\mathbb R$ by
\[
\mathfrak{M}_{T,\alpha,\varphi,\varsigma}
= \sum_{\gamma \in \mathcal P(\alpha)} e^{-\varsigma \ell(\gamma)+ \int_\gamma \varphi +\langle \alpha,\xi\rangle}
\delta_{\ell(\gamma)-T}.
\]
Write $g_\varsigma(x) = e^{-\varsigma x}g(x)$.
We then have the following.

\begin{lemma}\label{lem:relation_between_pi_and_M}
For all $\varsigma \in \mathbb R$, we have
\[
\pi_\varphi(T,\alpha,g_\varsigma) = e^{\varsigma T - \langle \alpha,\xi \rangle}
\int g \, d\mathfrak M_{T,\alpha,\varphi,\varsigma}.
\]
\end{lemma}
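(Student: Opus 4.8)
The plan is to prove the identity by direct substitution, unravelling both sides in terms of the defining sums over periodic orbits in the class $\alpha$. Recall $\pi_\varphi(T,\alpha,g_\varsigma) = \sum_{\gamma \in \mathcal P(\alpha)} g_\varsigma(\ell(\gamma)-T) \exp(\int_\gamma \varphi)$, and by definition $g_\varsigma(x) = e^{-\varsigma x} g(x)$, so
\[
\pi_\varphi(T,\alpha,g_\varsigma) = \sum_{\gamma \in \mathcal P(\alpha)} e^{-\varsigma(\ell(\gamma)-T)} g(\ell(\gamma)-T) \exp\left(\int_\gamma \varphi\right).
\]
On the other side, $\int g \, d\mathfrak M_{T,\alpha,\varphi,\varsigma} = \sum_{\gamma \in \mathcal P(\alpha)} e^{-\varsigma \ell(\gamma) + \int_\gamma \varphi + \langle \alpha,\xi\rangle} g(\ell(\gamma)-T)$, since the measure is a weighted sum of Dirac masses at the points $\ell(\gamma)-T$ and integrating $g$ against $\delta_{\ell(\gamma)-T}$ evaluates $g$ at $\ell(\gamma)-T$.

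The key step is then simply to factor the $T$- and $\alpha$-dependent constants out of the second sum. We have
\[
e^{\varsigma T - \langle \alpha,\xi\rangle} \int g \, d\mathfrak M_{T,\alpha,\varphi,\varsigma} = \sum_{\gamma \in \mathcal P(\alpha)} e^{\varsigma T - \langle \alpha,\xi\rangle} e^{-\varsigma \ell(\gamma) + \int_\gamma \varphi + \langle \alpha,\xi\rangle} g(\ell(\gamma)-T),
\]
and the factors $e^{\pm\langle\alpha,\xi\rangle}$ cancel while $e^{\varsigma T} e^{-\varsigma\ell(\gamma)} = e^{-\varsigma(\ell(\gamma)-T)}$, giving exactly the expression for $\pi_\varphi(T,\alpha,g_\varsigma)$ above. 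One should note that since $g$ is compactly supported and the periods $\ell(\gamma)$ for $\gamma \in \mathcal P(\alpha)$ with $\ell(\gamma)$ in any bounded window form a finite set, all the sums involved are finite, so the rearrangement and the pulling of constants through the sum are trivially justified with no convergence issues. There is essentially no obstacle here; the lemma is a bookkeeping identity whose only purpose is to set up the contour-integration/Fourier-analytic argument (in the spirit of \cite{bab-led}) that occupies the remainder of the proof of Theorem \ref{weighted-alpha-asymptotic}, where one integrates over $\varsigma$ along a vertical line and uses the analytic extension of $\exp\beta_\varphi$ noted at the end of Section \ref{section:therm-yoga}.
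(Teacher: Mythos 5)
Your proof is correct and is essentially the same direct calculation as the paper's: both sides are expanded as sums over $\gamma \in \mathcal P(\alpha)$, and the factors $e^{\varsigma T}$ and $e^{\pm\langle\alpha,\xi\rangle}$ are matched up termwise. The added remark on finiteness of the sums is fine but not needed for the identity.
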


\begin{proof}
The result follows from the direct calculation
\begin{align*}
\int g \, d\mathfrak M_{T,\alpha,\varphi,\varsigma}
&=
\sum_{\gamma \in \mathcal P(\alpha)}
g(\ell(\gamma)-T) e^{-\varsigma \ell(\gamma) +\int_\gamma \varphi + \langle \alpha,\xi \rangle}
\\
&= e^{-\varsigma T} 
\sum_{\gamma \in \mathcal P(\alpha)}
g(\ell(\gamma)-T) e^{-\varsigma (\ell(\gamma)-T) +\int_\gamma \varphi + \langle \alpha,\xi \rangle}
\\
&= e^{-\varsigma T}
g_\varsigma(\ell(\gamma)-T) e^{\int_\gamma \varphi +\langle \alpha ,\xi \rangle}
\\
&= e^{-\varsigma T + \langle \alpha,\xi \rangle} \pi_\varphi(T,\alpha,g_\varsigma). 
\end{align*}
\end{proof}

Now we introduce a complex function
\[
Z(s,w,z) 
= \sum_{\gamma \in \mathcal P} 
e^{-s \ell(\gamma) + w\int_\gamma \varphi + \langle [\gamma],z)\rangle},
\]
defined, where the series converges, for 
$(s,w,z) \in \mathbb C \times \mathbb C \times \mathbb C^b/2\pi i\mathbb Z^b$.
In fact, the $w$ only appears for book-keeping reasons and, ultimately, we shall set 
$w=1$. Furthermore, we are only interested in $z$ of the form
$z = \xi+iv$, with $v \in \mathbb R^b/2\pi \mathbb Z^b$.
We will relate $Z(s,w,z)$ to the logarithm of the zeta function
\[
\zeta(s,w,z) = \prod_{\gamma \in \mathcal P} 
\left(
1-e^{-s\ell(\gamma) + w\int_\gamma \varphi + \langle [\gamma],z)\rangle}
\right)^{-1}.
\]
We see that we have
\[
\log \zeta(s,w,z) = \sum_{n=1}^\infty \frac{1}{n} Z(ns,nw,nz).
\]

The standard theory of dynamical zeta functions \cite{PP} tells us that
$\zeta(s,1,\xi+iv)$ converges absolutely for
$\mathrm{Re}(s) > \beta$
and its analytic extension is well understood.
We need to show that $Z(s,1,\xi+iv)$ behaves like $\log \zeta(s,1,\xi+iv)$
and we do this by showing that their difference converges absolutely in a larger 
half-plane.

\begin{lemma}\label{same_aoc}
There exists $\epsilon>0$ such that $Z(s,1,\xi+iv) - \log \zeta(s,1,\xi+iv)$ converges
absolutely for $\mathrm{Re}(s) > \beta-\epsilon$.
\end{lemma}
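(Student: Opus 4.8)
The plan is to exploit the standard expansion $\log\zeta(s,1,\xi+iv) = \sum_{n=1}^\infty \frac1n Z(ns,n,n(\xi+iv))$ and observe that $Z(s,1,\xi+iv)$ is exactly the $n=1$ term. Hence
\[
Z(s,1,\xi+iv) - \log\zeta(s,1,\xi+iv) = -\sum_{n=2}^\infty \frac1n Z(ns,n,n(\xi+iv)),
\]
and everything reduces to showing that this tail series converges absolutely on a half-plane $\mathrm{Re}(s)>\beta-\epsilon$ for some $\epsilon>0$. Since $|e^{\langle[\gamma],n(\xi+iv)\rangle}| = e^{n\langle[\gamma],\xi\rangle}$ (the imaginary part contributes nothing to the modulus, using that $[\gamma]$ is an integral class paired against $2\pi i$-periodic coordinates appropriately), the $n$-th term is bounded in modulus by
\[
\frac1n \sum_{\gamma\in\mathcal P} e^{-n\,\mathrm{Re}(s)\,\ell(\gamma) + n\int_\gamma\varphi + n\langle[\gamma],\xi\rangle},
\]
so it suffices to control $\sum_\gamma e^{-\sigma\ell(\gamma) + \int_\gamma(\varphi+f_\xi)}$ for $\sigma$ slightly less than $\beta = P(\varphi+f_\xi)$, raised to the "$n$-th sampling" as above.

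The key step is the following growth estimate: for the H\"older function $\varphi + f_\xi$ with pressure $\beta = \beta_\varphi(\xi)$, Lemma \ref{lem:coho-to-neg} supplies $\epsilon>0$ and a H\"older continuous $v : M \to \mathbb R$ with $\int_0^T(\varphi+f_\xi)(X^tx)\,dt \le (\beta-\epsilon)T + v(X^Tx) - v(x)$ for all $x,T$. Evaluating on a periodic orbit $\gamma$ of period $\ell(\gamma)$ gives $\int_\gamma(\varphi+f_\xi) \le (\beta-\epsilon)\ell(\gamma)$, since the boundary terms cancel. Therefore, for $n\ge 2$ and $\mathrm{Re}(s) > \beta - \epsilon/2$,
\[
\sum_{\gamma\in\mathcal P} e^{-n\,\mathrm{Re}(s)\,\ell(\gamma) + n\int_\gamma(\varphi+f_\xi)}
\le \sum_{\gamma\in\mathcal P} e^{-n(\mathrm{Re}(s) - \beta + \epsilon)\ell(\gamma) - n(\epsilon/2)\cdot 0}\cdot e^{-n\epsilon\ell(\gamma)/2}\cdots
\]
— more cleanly: each exponent is at most $-n(\mathrm{Re}(s)-\beta+\epsilon)\ell(\gamma) \le -n(\epsilon/2)\ell(\gamma)$ when $\mathrm{Re}(s) \ge \beta - \epsilon/2$. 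Combining with the prime orbit counting bound $\#\{\gamma : \ell(\gamma)\le T\} \le C e^{h(X)T}$ (or simply with a crude exponential bound on the number of periodic orbits of period in $[T-1,T]$, which follows from the definition of topological entropy via $(T,\delta)$-separated sets), and using $n\ge 2$, we get that $\sum_{\gamma} e^{-n(\epsilon/2)\ell(\gamma)}$ decays geometrically in $n$ once $n\epsilon/2 > h(X)$, while the finitely many small-$n$ terms are each finite. Summing the $\frac1n$-weighted tail then converges.

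The main obstacle is bookkeeping the region of absolute convergence uniformly in $n$: one must check that there is a \emph{single} $\epsilon>0$ such that \emph{all} the terms $Z(ns,n,n(\xi+iv))$ for $n\ge 2$ converge on $\mathrm{Re}(s)>\beta-\epsilon$ and that their sum (with $\frac1n$ weights) converges there. This is handled by the observation above that $\mathrm{Re}(ns) = n\,\mathrm{Re}(s) > n(\beta-\epsilon)$, so for $n\ge 2$ the real part $n\,\mathrm{Re}(s)$ exceeds $2(\beta-\epsilon)$, and choosing $\epsilon$ small enough that $2(\beta-\epsilon) > \beta - \epsilon + \max(h(X),0)$, i.e. $\epsilon < \beta - h(X)$ adjusted appropriately — in fact for $\varphi = \varphi^u$ one has $\beta$ related to entropy, but in general one simply takes $\epsilon$ small and uses the geometric decay in $n$ coming from the factor $n$ in the exponent. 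Once the uniform half-plane is secured, absolute convergence of the double sum over $n\ge 2$ and $\gamma\in\mathcal P$ follows from Tonelli, completing the proof.
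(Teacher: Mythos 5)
Your overall strategy coincides with the paper's: expand $\log \zeta(s,1,\xi+iv)=\sum_{n\ge 1}\frac{1}{n}Z(ns,n,n(\xi+iv))$, identify the difference with the tail $n\ge 2$, note that the imaginary part of $z$ contributes nothing to the modulus, and control the tail using the fact that $-\varsigma\ell(\gamma)+\int_\gamma(\varphi+f_\xi)\le -(\epsilon/2)\ell(\gamma)$ for every periodic orbit when $\varsigma\ge\beta-\epsilon/2$. Your derivation of the pointwise bound $\int_\gamma(\varphi+f_\xi)\le(\beta-\epsilon)\ell(\gamma)$ from Lemma \ref{lem:coho-to-neg} (evaluating around a full period so the boundary terms cancel) is correct and is equivalent to what the paper extracts via symbolic dynamics. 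The gap is in how you then sum over $\gamma$ for a \emph{fixed} small $n$. You bound the $n$-th term by $\sum_\gamma e^{-n(\epsilon/2)\ell(\gamma)}$ and appeal to the counting bound $\#\{\gamma:\ell(\gamma)\le T\}\le Ce^{h(X)T}$. But $\sum_\gamma e^{-a\ell(\gamma)}$ diverges whenever $a\le h(X)$, and nothing guarantees $n\epsilon/2>h(X)$ for $n=2$ (or other small $n$): the $\epsilon$ supplied by Lemma \ref{lem:coho-to-neg} is small, $h(X)>0$ for an Anosov flow, and in the principal case $\varphi=\varphi^u$ one has $\beta=0<h(X)$, so your side condition (roughly $\beta-\epsilon>h(X)$) cannot hold; making $\epsilon$ smaller, as you suggest at the end, only worsens matters. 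Thus your assertion that ``the finitely many small-$n$ terms are each finite'' is precisely what your estimate fails to deliver: the terms are indeed finite, but discarding the full weight $e^{-\varsigma\ell(\gamma)+\int_\gamma(\varphi+f_\xi)}$ in favour of pure exponential decay in $\ell(\gamma)$ is too lossy.

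The repair is what the paper does. Writing $A_\gamma=-\varsigma\ell(\gamma)+\int_\gamma(\varphi+f_\xi)$, sum the geometric series first:
\[
\sum_{n\ge 2}e^{nA_\gamma}=\frac{e^{2A_\gamma}}{1-e^{A_\gamma}}\le C\,e^{A_\gamma}\,e^{-(\epsilon/2)\ell(\gamma)},
\]
where $C$ depends only on the uniform negativity of $A_\gamma$. Only the $(n-1)$ surplus copies of $A_\gamma$ are converted into decay; one full copy is retained, so the resulting sum over $\gamma$ is $C\,Z(\varsigma+\epsilon/2,1,\xi)$, which converges because $\varsigma+\epsilon/2>\beta$ lies in the known half-plane of absolute convergence of the zeta function. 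With this substitution (and the harmless interchange of the sums over $n$ and $\gamma$ by Tonelli, which you already invoke), your argument closes and agrees with the paper's proof.
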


\begin{proof}
As in the proof of Lemma \ref{lem:coho-to-neg} (to which we refer for notation), we use the fact that
there is 
a suspended flow $\sigma^t : \Sigma^r \to \Sigma^r$
over a mixing 
subshift of finite type $\sigma : \Sigma \to \Sigma$, with a strictly positive H\"older continuous roof function $r : \Sigma \to \mathbb R$. \cite{bowen2}, 
and a H\"older continuous surjection $\pi : \Sigma^r \to M$ that semi-conjugates
$\sigma^t$ and $X^t$ and is sufficiently close to 
being a bijection that the pressure of a function with respect to $X^t$ of of its pull-back by $\pi$
are equal. 
If we define a function $q_{\varphi +f_\xi} : \Sigma \to \mathbb R$ by
\[
q_{\varphi+f_\xi}(x) = \int_0^{r(x)} (\varphi + f_{\xi})(\pi(x,\tau)) \, d\tau
\]
then the relationship between pressure with respect to $\sigma$ and with respect to the 
suspended flow gives that 
\[
P_\sigma(-\beta r+q_{\varphi+f_\xi})=0.
\]
It then follows that $-\beta r+q_{\varphi+f_\xi}$ is $\sigma$-cohomologous to a strictly negative function, i.e. that there exists a continuous function $u : \Sigma \to \mathbb R$ such that
$-\beta r+q_{\varphi +f_\xi} + u\circ \sigma -u$ is strictly negative and, in fact,
bounded above by 
$-3 \epsilon\|r\|_\infty$, for some $\epsilon>0$
(see Chapter 7 of \cite{PP}).
We then have that
\[
-\beta  \ell(\gamma) 
+ \int_\gamma \varphi + \langle [\gamma],\xi\rangle 
\le -3\epsilon  \ell(\gamma),
\]
for all $\gamma \in \mathcal P$.

For $\varsigma > \beta -\epsilon$, we have
\[
-\varsigma \ell(\gamma) + \int_\gamma \varphi + \langle [\gamma],\xi\rangle 
\le -2\epsilon \ell(\gamma)
<0
\]
and hence
\begin{align*}
&|\log \zeta(\varsigma,1,\xi) - Z(\varsigma,1,\xi)|
=\sum_{n=2}^\infty \frac{1}{n} Z(n\varsigma,n,n\xi) 
\\
&\le \sum_{n=2}^\infty  Z(n\varsigma,n,n\xi)
= \sum_{n=2}^\infty \sum_{\gamma \in \mathcal P} 
e^{n(-\varsigma \ell(\gamma) + \int_\gamma \varphi + \langle [\gamma],\xi \rangle)}
\\
&=
\sum_{\gamma \in \mathcal P} 
\frac{e^{2(-\varsigma \ell(\gamma) + \int_\gamma \varphi + \langle [\gamma],\xi \rangle)}}
{1-e^{-\varsigma  \ell(\gamma) + \int_\gamma \varphi + \langle [\gamma],\xi )\rangle}}
\le 
C
\sum_{\gamma \in \mathcal P}
e^{-\varsigma \ell(\gamma) + \int_\gamma \varphi + \langle [\gamma],\xi \rangle}
e^{-2\epsilon  \ell(\gamma)}
\\
&= 
C
\sum_{\gamma \in \mathcal P} 
e^{-(\varsigma+2\epsilon ) \ell(\gamma) + \int_\gamma \varphi + \langle [\gamma],\xi \rangle}
=
C
\sum_{\gamma \in \mathcal P} 
e^{-(\beta+\epsilon ) \ell(\gamma) + \int_\gamma \varphi + \langle [\gamma],\xi \rangle}
\\
&= C Z(\beta+\epsilon,1,\xi) < \infty,
\end{align*}
where $C$ is some positive constant (depending on $\epsilon$).
\end{proof}

For $k \in \mathbb N$, let $C^k(\mathbb R \times \mathbb R^b/(2\pi \mathbb Z)^b,\mathbb R)$ denote the set of $C^k$ functions from $\mathbb R \times \mathbb R^b/(2\pi \mathbb Z)^b$ to $\mathbb R$, equipped with the topology of uniform convergence of the $j$th derivatives,
for $0 \le j \le k$, on compact sets.
The following result is, apart from the weighting by $\varphi$, a simplified
version of Proposition 2.1 in \cite{bab-led}.

\begin{proposition}
For each $k \in \mathbb{N}$, there exists
\begin{enumerate}
\item[(i)]
an open neighbourhood $U = U_1 \times U_2$ of $(0,0) \in \mathbb R \times \mathbb R^b/2\pi \mathbb Z^b$;
\item[(ii)]
a function $\rho \in C^k(\mathbb R \times \mathbb R^b/2\pi \mathbb Z^b,\mathbb R)$ which satisfies
$\rho(0,0)=1$ and vanishes outside of $U$;
\item[(iii)]
a function $A \in C^k(\mathbb R \times \mathbb R^b/2\pi \mathbb Z^b,\mathbb R)$ such that
\[
\lim_{\varsigma \downarrow \beta_\varphi(\xi(\varphi))} Z(\varsigma +it,1,\xi+iv)
= -\rho(t,v) \log(\beta +it - \beta_\varphi(\xi+iv)) + A(t,v),
\]
where $\beta_\varphi(u+iv)$ is an analytic extension of $\beta_\varphi(u)$ to 
$\{u \in \mathbb R^b \hbox{ : } \|u-\xi(\varphi)\|<\delta\} \times U_2$, for some small $\delta>0$. 
\end{enumerate}
In particular,
the function 
\[
\Upsilon(t,v) := \lim_{\varsigma \downarrow \beta_\varphi(\xi(\varphi))} Z(\varsigma+it,1, \xi+iv)
\]
is locally integrable on $\mathbb R \times \mathbb R^b/2\pi \mathbb Z^b$.
Furthermore, for any compact
$K \subset \mathbb R \times \mathbb R^b/2\pi \mathbb Z^b$, there exist constants
$C_1,C_2>0$ such that, for any $\varsigma > \beta$, we have
\[
|Z(\varsigma+it,\xi+iv)| \le 
\begin{cases}
-C_1 \log|\beta +it -\beta_\varphi(\xi(\varphi)+iv)| \mbox{ if } (t,v) \in U, \\
C_2 \mbox{ if } (t,v) \in K\setminus U.
\end{cases}
\]
\end{proposition}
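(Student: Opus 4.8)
The plan is to adapt the proof of Proposition~2.1 of \cite{bab-led}; the weighting by $\varphi$ is an inessential modification, since it merely augments the potential appearing in the relevant transfer operators by a H\"older term and does not affect the structure of the argument. The first step is to reduce the study of $Z$ to that of $\log\zeta$. By Lemma~\ref{same_aoc} the series defining $Z(s,1,\xi+iv)-\log\zeta(s,1,\xi+iv)$ converges absolutely for $\mathrm{Re}(s)>\beta-\epsilon$, uniformly in $v\in\mathbb R^b/(2\pi\mathbb Z)^b$; hence this difference is holomorphic in $s$ on that half-plane, and term-by-term differentiation in $(t,v)$ --- each derivative producing polynomially growing factors $\ell(\gamma)$ and $[\gamma]$ that are swallowed by the decay $e^{-2\epsilon\ell(\gamma)}$ established in the proof of that lemma --- shows it is $C^\infty$ in $(t,v)$, with a well-defined boundary value as $\varsigma\downarrow\beta$. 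This piece will be absorbed into $A$, so it remains to describe the singular behaviour of $\log\zeta(\varsigma+it,1,\xi+iv)$ as $\varsigma\downarrow\beta$ near $(t,v)=(0,0)$.

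For this I would pass, as in the proofs of Lemmas~\ref{lem:coho-to-neg} and~\ref{same_aoc}, to the suspended flow over a mixing subshift of finite type and apply the standard analytic perturbation theory of transfer operators (cf.\ \cite{PP}). This gives that $\zeta(s,1,\xi+iv)$ has an analytic, non-vanishing extension to a neighbourhood of $\{(s,v):\mathrm{Re}(s)=\beta\}$, apart from a simple pole situated precisely where the leading eigenvalue of the transfer operator carrying the $(s,v)$-dependent complex potential equals $1$. By the definition of $\beta_\varphi$ and the analytic continuation of $s\mapsto\exp\beta_\varphi(s)$ recorded at the end of Section~\ref{section:therm-yoga}, this pole sits at $s=\beta_\varphi(\xi+iv)$, where $\beta_\varphi(\xi+iv)$ denotes that continuation (well defined for $v$ small since $\exp\beta_\varphi$ is zero-free); for $v\neq 0$ one has $\mathrm{Re}\,\beta_\varphi(\xi+iv)<\beta$, so the pole lies strictly to the left of $\{\mathrm{Re}(s)=\beta\}$, and, using that a homologically full transitive Anosov flow is weak-mixing (Plante \cite{plante}; see Section~\ref{section:anosov}) together with the fact that the homology classes of periodic orbits generate $H_1(M,\mathbb Z)$ (Proposition~\ref{prop:periodic_orbits_generate_homology}), one checks there are no further poles or zeros of the extension on $\{\mathrm{Re}(s)=\beta\}$. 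Hence, on a small polydisc about $(0,0)$,
\[
\log\zeta(s,1,\xi+iv)=-\log\big(s-\beta_\varphi(\xi+iv)\big)+G(s,v)
\]
with $G$ analytic, the principal branch of the logarithm being available because, by the Taylor expansion below, $s-\beta_\varphi(\xi+iv)$ takes values in the right half-plane minus the origin for $(t,v)$ near but distinct from $(0,0)$. Taking a $C^\infty$ bump function $\rho$ with $\rho(0,0)=1$ and support in that polydisc, the function $(1-\rho(t,v))\log(s-\beta_\varphi(\xi+iv))$ extends smoothly, since its only singularity, at $(0,0)$, lies off the support of $1-\rho$; incorporating this, $G$, the $Z-\log\zeta$ contribution, and the continuous non-singular pieces away from the polydisc into a single function $A$ and letting $\varsigma\downarrow\beta$ yields the stated identity, with $A\in C^k$ (indeed $C^\infty$).

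The quantitative bounds and the local integrability of $\Upsilon$ then follow from expanding $\beta_\varphi$ about $\xi$. Since $\xi$ is the minimum of the strictly convex function $\beta_\varphi$ (strict convexity because the flow is homologically full), $\nabla\beta_\varphi(\xi)=0$ and, by Lemma~\ref{convexity_of_pressure}, $\nabla^2\beta_\varphi(\xi)$ is positive definite; hence
\[
\beta+it-\beta_\varphi(\xi+iv)=it+\tfrac12\langle\nabla^2\beta_\varphi(\xi)v,v\rangle+O(|v|^3),
\]
so $|\beta+it-\beta_\varphi(\xi+iv)|\geq c(|t|+|v|^2)$ for $(t,v)$ near $(0,0)$. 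Thus $-C_1\log|\beta+it-\beta_\varphi(\xi+iv)|$ is an admissible majorant on $U$ and is locally integrable, since $\log(|t|+|v|^2)$ is; on $K\setminus U$ the extension of $\zeta$ is analytic and non-zero in the limit, so $Z(\varsigma+it,1,\xi+iv)$ is bounded there, giving $C_2$; and local integrability of $\Upsilon$ on all of $\mathbb R\times\mathbb R^b/(2\pi\mathbb Z)^b$ follows by combining these estimates with $A\in C^k\subset L^\infty_{\mathrm{loc}}$. The main obstacle is the perturbation-theoretic input of the second paragraph --- joint analyticity in $(s,v)$, the exact location and simplicity of the pole, and the exclusion of spurious poles and zeros on $\{\mathrm{Re}(s)=\beta\}$ --- but this is precisely \cite[Proposition~2.1]{bab-led}, whose proof transfers once the potential is enlarged by $\varphi$.
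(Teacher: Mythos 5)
Your proposal is correct and follows essentially the same route the paper takes: the paper offers no independent proof of this proposition, citing it as a $\varphi$-weighted simplification of Proposition 2.1 of \cite{bab-led}, with Lemma \ref{same_aoc} supplied precisely to reduce $Z$ to $\log\zeta$ as you do. Your filling-in of the remaining steps --- the transfer-operator location of the simple pole at $s=\beta_\varphi(\xi+iv)$, the exclusion of further poles via weak-mixing and Proposition \ref{prop:periodic_orbits_generate_homology}, and the lower bound $|\beta+it-\beta_\varphi(\xi+iv)|\ge c(|t|+|v|^2)$ from the positive-definite Hessian at the minimum --- is accurate and consistent with the intended argument.
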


\medskip
One can then proceed as in section 2 of \cite{bab-led} 
to show that, if we define
\[
\mathfrak{m}_T :=(2\pi)^{b/2} \sqrt{\det \nabla^2\beta_\varphi(\xi(\varphi))} T^{1+b/2}
e^{\langle \alpha,\xi \rangle}
\mathfrak{M}_{T,\alpha,\varphi,\beta},
\]
then
\[
\lim_{T \to \infty} \int_{\mathbb{R}} g \, d\mathfrak{m}_T = \int_{\mathbb{R}} g \, d\mathrm{Leb},
\]
for all continuous compactly supported $g : \mathbb{R} \to \mathbb{R}$. Finally, taking a continuous
compactly supported $g : \mathbb R \to \mathbb R$ and applying this to $g_\beta$, we
can use Lemma \ref{lem:relation_between_pi_and_M} to
obtain Theorem \ref{weighted-alpha-asymptotic}.

\subsection{Large deviations and weighted equidistribution}
For $\delta>0$, write
\[
\Xi_\varphi(T,\alpha,\mathbbm{1}_{[a,b]},\mathcal K)
= \sum_{\substack{\gamma \in \mathcal P(\alpha) \\ \mu_\gamma \in \mathcal K}}
\mathbbm{1}_{[a,b]}(\ell(\gamma)-T)
\exp\left( \int_\gamma \varphi \right).
\]
The growth rate result in Corollary \ref{cor:growth_of_weighted_null_hom} implies the following large deviations estimate.

\begin{theorem}\label{th:ld}
Let $X^t : M \to M$ be a homologically full transitive Anosov flow and let $\varphi : M \to \mathbb R$ 
be a H\"older
continuous function.
Then, for every compact set $\mathcal K \subset \mathcal M(X)$ such that $\mu_{\varphi + f_\xi}
\notin \mathcal K$ and $a<b$, we have
\[
\limsup_{T \to \infty} \frac{1}{T} \log 
\left(\frac{\Xi_\varphi(T,\alpha,\mathbbm{1}_{[a,b]},\mathcal K)}{\pi_\varphi(T,\alpha,\mathbbm{1}_{[a,b]})}\right)<0.
\]
The same holds if we replace $[a,b]$ by $(a,b)$, $(a,b]$ or $[a,b)$.
\end{theorem}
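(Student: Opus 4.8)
The plan is to deduce this from the large deviations estimate for orbital measures carrying no homological constraint — the theorem of Pollicott \cite{Po1995} recorded in Section \ref{section:equi_per_orbits} — by absorbing the class $\alpha$ into a cohomological twist of the weight. First I would fix a smooth closed $1$-form $\omega$ on $M$ with $[\omega]=\xi=\xi(\varphi)$ and set $f_\xi=\omega(X)$, a smooth (in particular H\"older) representative of the $X$-cohomology class $f_{[\omega]}$; recall that $P(\varphi+f_\xi)=\beta_\varphi(\xi)=\beta$ and that $\mu_{\varphi+f_\xi}$ is the equilibrium state of $\varphi+f_\xi$. For $\gamma\in\mathcal P(\alpha)$ one has $\int_\gamma f_\xi=\langle[\gamma],\xi\rangle=\langle\alpha,\xi\rangle$, independently of the choice of $\omega$ since changing the representative changes $f_\xi$ only by an $X$-coboundary, so $e^{\int_\gamma\varphi}=e^{-\langle\alpha,\xi\rangle}e^{\int_\gamma(\varphi+f_\xi)}$ on $\mathcal P(\alpha)$. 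Discarding the constraint $[\gamma]=\alpha$ (which only enlarges the sum) therefore gives
\[
\Xi_\varphi(T,\alpha,\mathbbm 1_{[a,b]},\mathcal K)
= e^{-\langle\alpha,\xi\rangle}\!\!\sum_{\substack{\gamma\in\mathcal P(\alpha)\\ \mu_\gamma\in\mathcal K}}\!\!\mathbbm 1_{[a,b]}(\ell(\gamma)-T)\,e^{\int_\gamma(\varphi+f_\xi)}
\le e^{-\langle\alpha,\xi\rangle}\,\Xi_{\varphi+f_\xi}(T,\mathbbm 1_{[a,b]},\mathcal K),
\]
the last term being the quantity from Section \ref{section:equi_per_orbits} attached to the H\"older weight $\varphi+f_\xi$.

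Next I would estimate the right-hand side. Since the flow is homologically full it is weak-mixing, and by hypothesis $\mu_{\varphi+f_\xi}\notin\mathcal K$; so Pollicott's large deviations theorem applied to the weight $\varphi+f_\xi$ gives
\[
\limsup_{T\to\infty}\frac1T\log\frac{\Xi_{\varphi+f_\xi}(T,\mathbbm 1_{[a,b]},\mathcal K)}{\pi_{\varphi+f_\xi}(T,\mathbbm 1_{[a,b]})}<0,
\]
while Lemma \ref{lem:growth_of_weighted_per_orbits} applied to $\varphi+f_\xi$ gives $\frac1T\log\pi_{\varphi+f_\xi}(T,\mathbbm 1_{[a,b]})\to P(\varphi+f_\xi)=\beta$. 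Combining these, $\limsup_{T\to\infty}\frac1T\log\Xi_\varphi(T,\alpha,\mathbbm 1_{[a,b]},\mathcal K)<\beta$. On the other hand, Corollary \ref{cor:growth_of_weighted_null_hom} gives $\frac1T\log\pi_\varphi(T,\alpha,\mathbbm 1_{[a,b]})\to\beta$. Subtracting,
\[
\limsup_{T\to\infty}\frac1T\log\frac{\Xi_\varphi(T,\alpha,\mathbbm 1_{[a,b]},\mathcal K)}{\pi_\varphi(T,\alpha,\mathbbm 1_{[a,b]})}<\beta-\beta=0,
\]
which is the assertion. For the half-open and open intervals, the numerator $\Xi_\varphi(T,\alpha,\mathbbm 1_I,\mathcal K)$ is bounded above by the closed-interval numerator, and Corollary \ref{cor:growth_of_weighted_null_hom} provides $\frac1T\log\pi_\varphi(T,\alpha,\mathbbm 1_I)\to\beta$ for each interval type $I$, so the same bound follows verbatim.

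I do not expect an essential obstacle: the genuine analytic content is already contained in Section \ref{section:equi_per_orbits} and in the growth-rate Corollary \ref{cor:growth_of_weighted_null_hom}. The one point needing care is that $\pi_\varphi(T,\alpha,\mathbbm 1_{[a,b]})$ and $\pi_{\varphi+f_\xi}(T,\mathbbm 1_{[a,b]})$ carry different polynomial corrections ($T^{-(1+b/2)}$ versus $T^{-1}$, the extra $T^{-b/2}$ coming from the homology constraint, cf. Theorem \ref{weighted-alpha-asymptotic}), but these are invisible on the exponential scale, so the reduction is legitimate. If one preferred a self-contained argument it could be obtained by mimicking Kifer's proof of the unconstrained estimate: cover $\mathcal K$ by finitely many weak$^*$ neighbourhoods on each of which $h(\nu)+\int(\varphi+f_\xi)\,d\nu<\beta-2\epsilon$ — possible by upper semi-continuity of entropy (Lemma \ref{entropyusc}) together with $h(\nu)+\int(\varphi+f_\xi)\,d\nu\le\beta$, with equality only at $\mu_{\varphi+f_\xi}$ — bound the $(\varphi+f_\xi)$-weighted count of orbits whose empirical measure lies in each neighbourhood by a $(T,\delta)$-separated-set estimate, sum over the cover, and divide by the denominator asymptotic supplied by Corollary \ref{cor:growth_of_weighted_null_hom}.
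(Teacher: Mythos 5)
Your proof is correct, but it takes a genuinely different route from the paper. You reduce the constrained large deviations bound to the \emph{unconstrained} one (Theorem \ref{th:ld} of Section \ref{section:equi_per_orbits}, due to Pollicott) by twisting the weight: on $\mathcal P(\alpha)$ you have $e^{\int_\gamma \varphi}=e^{-\langle\alpha,\xi\rangle}e^{\int_\gamma(\varphi+f_\xi)}$, you drop the homology constraint in the numerator (legitimate, as the summands are positive and the lost polynomial factor $T^{-b/2}$ is invisible at exponential scale), and you convert between the two denominators via Lemma \ref{lem:growth_of_weighted_per_orbits} and Corollary \ref{cor:growth_of_weighted_null_hom}, both of which give the same exponent $\beta=P(\varphi+f_\xi)$. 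There is no circularity, since the paper's proof of the present theorem does not feed into the unconstrained statement. The paper instead argues directly in Kifer's style: it sets $Q(\psi)=P(\varphi+f_\xi+\psi)$, uses the same twist-and-drop bound but uniformly in a test function $\psi$, covers the compact set $\mathcal K$ by finitely many half-spaces $\{\nu:\int\psi_i\,d\nu>\rho-\epsilon\}$, and shows the resulting rate $\rho+Q(0)$ is strictly positive via Lemma \ref{entropyusc} (upper semi-continuity of entropy and the dual variational formula) together with uniqueness of the equilibrium state $\mu_{\varphi+f_\xi}$. Your route is shorter and cleaner, but it delegates the analytic core to the unconstrained theorem, whose proof the paper itself only cites; your fallback sketch in the final sentences is essentially the paper's actual argument, so either way the underlying mechanism is the same.
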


\begin{proof}
This is a standard type of argument which originates from the work of Kifer
(for example \cite{kifer-tams}).
Define a function $Q : C(M,\mathbb R) \to \mathbb R$ by
\[
Q(\psi) = P(\varphi + f_\xi + \psi).
\]
From Corollary \ref{cor:growth_of_weighted_null_hom}, we have
\begin{equation}\label{eq:ld_bound1}
\lim_{T \to \infty} \frac{1}{T} \log \pi_\varphi(T,\alpha,\mathbbm{1}_{[a,b]}) = \beta = P(\varphi+f_\xi)=Q(0).
\end{equation}
Also, for every $\psi \in C(M,\mathbb R)$, we have
\begin{align*}
\sum_{\gamma \in \mathcal P(\alpha)}
\mathbbm{1}_{[a,b]}(\ell(\gamma)-T)
e^{\int_\gamma (\varphi+\psi)}
&=
e^{-\langle \alpha,\xi \rangle} 
\sum_{\gamma \in \mathcal P(\alpha)}
\mathbbm{1}_{[a,b]}(\ell(\gamma)-T)
e^{\int_\gamma (\varphi + f_\xi+\psi)}
\\
&\le
e^{-\langle \alpha,\xi \rangle} 
\sum_{\gamma \in \mathcal P}
\mathbbm{1}_{[a,b]}(\ell(\gamma)-T)
e^{\int_\gamma (\varphi + f_\xi + \psi)},
\end{align*}
giving
\begin{equation}\label{eq:ld_bound2}
\limsup_{T \to \infty} \frac{1}{T} \log
\sum_{\gamma \in \mathcal P(\alpha)}
\mathbbm{1}_{[a,b]}(\ell(\gamma)-T)
e^{\int_\gamma (\varphi+\psi)} 
\le Q(\psi).
\end{equation}

Now define
\[
\rho := 
\inf_{\nu \in \mathcal K} \sup_{\psi \in C(M,\mathbb R)}
\left(\int \psi \, d\nu -Q(\psi)\right).
\]
Given $\epsilon>0$, it follows from the definition of $\rho$ that for every $\nu \in \mathcal K$, there
exists $\psi \in C(M,\mathbb R)$ such that
\[
\int \psi \, d\nu - Q(\psi) > \rho-\epsilon.
\]
Hence
\[
\mathcal K \subset \bigcup_{\psi \in C(M,\mathbb R)} \left\{\nu \in \mathcal M(X) \hbox{ : }
\int \psi \, d\nu > \rho -\epsilon\right\}.
\]
Since $\mathcal K$ is compact, we can find a finite set of functions
$\psi_1,\ldots,\psi_k \in C(M,\mathbb R)$ such that
\[
\mathcal K \subset \bigcup_{i=1}^k \left\{\nu \in \mathcal M(X) \hbox{ : }
\int \psi_i \, d\nu > \rho -\epsilon\right\}.
\]
We then have
\begin{align*}
\Xi_\varphi(T,\alpha,\mathbbm{1}_{[a,b]},\mathcal K)
&\le
\sum_{i=1}^k
\sum_{\substack{\gamma \in \mathcal P(\alpha) \\ \int \psi_i \, d\mu_\gamma -Q(\psi_i) > \rho-\epsilon}}
\mathbbm{1}_{[a,b]}(\ell(\gamma)-T)
e^{ \int_\gamma \varphi}
\\
&\le
\sum_{i=1}^k 
\sum_{\gamma \in \mathcal P}
e^{-\ell(\gamma)(Q(\psi_i)+\rho-\epsilon) + \int_{\gamma} \psi_i}.
\end{align*}
Recalling the bound (\ref{eq:ld_bound2}), we have
\[
\limsup_{T \to \infty} \frac{1}{T} \log \Xi_\varphi(T,\alpha,\mathbbm{1}_{[a,b]},\mathcal K)
\le -\rho +\epsilon.
\]
Since $\epsilon>0$ is arbitrary, we can combine this with (\ref{eq:ld_bound1}) to obtain
\[
\limsup_{T \to \infty} \frac{1}{T} \log 
\left(\frac{\Xi_\varphi(T,\alpha,\mathbbm{1}_{[a,b]},\mathcal K)}{\pi_\varphi(T,\alpha,\mathbbm{1}_{[a,b]})}\right)
\le -\rho -Q(0).
\]

To complete the proof, we show that $\rho+Q(0)>0$. For any measure $\nu \in \mathcal M(X)$,
we have
\begin{align*}
&\sup_{\psi \in C(M,\mathbb R)} 
\left(\int \psi \, d\nu -Q(\psi) + Q(0)\right)
\\
&=
\sup_{\psi \in C(M,\mathbb R)} 
\left(\int \psi \, d\nu - P(\varphi + f_\xi +\psi) + P(\varphi + f_\xi)\right)
\\
&=
\sup_{\psi \in C(M,\mathbb R)} 
\left(\int (\psi - \varphi - f_\xi) \, d\nu -P(\psi) + P(\varphi+f_\xi)\right)
\\
&=
\sup_{\psi \in C(M,\mathbb R)} 
\left(\int \psi \, d\nu - P(\psi)\right) + P(\varphi+f_\xi) - \int (\varphi +f_\xi) \, d\nu
\\
&= - \inf_{\psi \in C(M,\mathbb R)}
\left(P(\psi) - \int \psi \, d\nu\right)  + P(\varphi+f_\xi) - \int (\varphi +f_\xi) \, d\nu
\\
&= 
-h(\nu) + P(\varphi+f_\xi) - \int (\varphi +f_\xi) \, d\nu,
\end{align*}
where the last equality comes from Lemma \ref{entropyusc}.
If $\nu \in \mathcal K$ then $\nu \ne \mu_{\varphi+f_\xi}$ and the uniqueness of equilibrium states gives 
that
\[
-h(\nu) + P(\varphi+f_\xi) - \int (\varphi +f_\xi) \, d\nu >0.
\]
Since, by Lemma \ref{entropyusc}, the map 
\[
\nu \mapsto -h(\nu) + P(\varphi+f_\xi) - \int (\varphi +f_\xi) \, d\nu
\]
is lower semi-continuous on $\mathcal M(X)$ and $\mathcal K$ is compact, we see that 
$\rho +Q(0)>0$, as required.
\end{proof}

We can now obtain the weighted equidistribution theorem for periodic orbits in a homology class.

\begin{theorem} \label{weighted-equi}
Let $X^t : M \to M$ be a homologically full transitive Anosov flow. 
Let $\varphi : M \to \mathbb R$ be H\"older continuous.
Then the measures
\[
\frac{1}{\pi_\varphi(T,\alpha,\mathbbm{1}_{[a,b]})} 
\sum_{\gamma \in \mathcal P(\alpha)} 
\mathbbm{1}_{[a,b]} (\ell(\gamma)-T)
e^{\int_\gamma \varphi} \mu_\gamma
\]
converge weak$^*$ to $\mu_{\varphi+f_\xi}$, as $T \to \infty$,
and the same holds if we replace $[a,b]$ by $(a,b)$, $(a,b]$ or $[a,b)$.
\end{theorem}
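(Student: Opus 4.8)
The plan is to deduce Theorem \ref{weighted-equi} from the large deviations estimate in Theorem \ref{th:ld} by a standard argument, identical in structure to the one used to pass from Theorem \ref{th:ld} to Theorem \ref{weighted-nonnull-equi} in Section \ref{section:equi_per_orbits}. First I would fix a continuous function $\psi : M \to \mathbb R$ and show that
\[
\lim_{T \to \infty} \frac{1}{\pi_\varphi(T,\alpha,\mathbbm{1}_{[a,b]})}
\sum_{\gamma \in \mathcal P(\alpha)}
\mathbbm{1}_{[a,b]}(\ell(\gamma)-T)
e^{\int_\gamma \varphi} \int \psi \, d\mu_\gamma
= \int \psi \, d\mu_{\varphi+f_\xi}.
\]
By the Riesz representation theorem and a density argument (the space of invariant probability measures $\mathcal M(X)$ is metrizable in the weak$^*$ topology), convergence of these integrals along a suitable dense countable family of test functions is equivalent to weak$^*$ convergence of the weighted orbital measures to $\mu_{\varphi+f_\xi}$.

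The key step is the following observation: since $\mu_{\varphi+f_\xi}$ is the equilibrium state of the H\"older function $\varphi + f_\xi$, it is ergodic; hence for any weak$^*$ neighbourhood $\mathcal N$ of $\mu_{\varphi+f_\xi}$ in $\mathcal M(X)$, the complement $\mathcal K := \mathcal M(X) \setminus \mathcal N$ is a compact set not containing $\mu_{\varphi+f_\xi}$. Given $\psi$ and $\epsilon > 0$, choose $\mathcal N$ small enough that $|\int \psi \, d\nu - \int \psi \, d\mu_{\varphi+f_\xi}| < \epsilon$ for all $\nu \in \mathcal N$. Split the sum defining the weighted orbital measure applied to $\psi$ into the part over $\gamma$ with $\mu_\gamma \in \mathcal N$ and the part with $\mu_\gamma \in \mathcal K$. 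For the first part, each $\int \psi \, d\mu_\gamma$ is within $\epsilon$ of $\int \psi \, d\mu_{\varphi+f_\xi}$, so after normalizing this contributes $\int \psi \, d\mu_{\varphi+f_\xi} + O(\epsilon)$. For the second part, Theorem \ref{th:ld} gives that
\[
\frac{\Xi_\varphi(T,\alpha,\mathbbm{1}_{[a,b]},\mathcal K)}{\pi_\varphi(T,\alpha,\mathbbm{1}_{[a,b]})}
\]
decays exponentially, so that part is bounded by $\|\psi\|_\infty$ times a quantity tending to $0$, hence is negligible as $T \to \infty$. Letting $T \to \infty$ and then $\epsilon \to 0$ gives the claimed limit.

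One technical point to dispatch is that we need $\pi_\varphi(T,\alpha,\mathbbm{1}_{[a,b]})$ itself to be positive for all large $T$ (so that the normalisation makes sense); this follows from Corollary \ref{cor:growth_of_weighted_null_hom}, which gives $\frac{1}{T}\log \pi_\varphi(T,\alpha,\mathbbm{1}_{[a,b]}) \to \beta$, in particular $\pi_\varphi(T,\alpha,\mathbbm{1}_{[a,b]}) > 0$ eventually. A second point is that the statement of the theorem refers to a ``homologically full Anosov flow'' without the transitivity hypothesis appearing explicitly; but a homologically full Anosov flow is automatically transitive (indeed weak-mixing, as noted after Proposition \ref{prop:equiv_to_hom_full}), so all the ingredients — in particular Theorem \ref{th:ld} and Corollary \ref{cor:growth_of_weighted_null_hom} — apply. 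The cases $(a,b)$, $(a,b]$, $[a,b)$ are handled verbatim, since Theorem \ref{th:ld} and Corollary \ref{cor:growth_of_weighted_null_hom} are stated for all four interval types.

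I do not expect a serious obstacle here: the entire content has been front-loaded into Theorem \ref{th:ld} (the large deviations estimate) and Theorem \ref{weighted-alpha-asymptotic}/Corollary \ref{cor:growth_of_weighted_null_hom} (the growth rate). The deduction is the routine ``upper-bound-on-the-bad-set plus ergodicity of the equilibrium state'' packaging; the only mild care required is keeping the normalising denominators matched when splitting the sum, and invoking that the weighted measures all have total mass $1$ so tightness is automatic (the ambient space $M$ is compact) and any weak$^*$ limit point is an invariant probability measure whose integral against every $\psi$ is forced to equal $\int \psi \, d\mu_{\varphi+f_\xi}$, hence the limit is $\mu_{\varphi+f_\xi}$ and the full sequence converges.
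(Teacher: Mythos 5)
Your proposal is correct and follows essentially the same route as the paper: fix a continuous test function $\psi$, take $\mathcal K$ to be the (compact) set of invariant measures whose $\psi$-integral differs from $\int\psi\,d\mu_{\varphi+f_\xi}$ by at least $\epsilon$, apply Theorem \ref{th:ld} to make the contribution of orbits with $\mu_\gamma\in\mathcal K$ exponentially negligible, and let $\epsilon\to 0$. (The appeal to ergodicity is superfluous — compactness of $\mathcal M(X)$ and the fact that $\mathcal K$ avoids $\mu_{\varphi+f_\xi}$ are all that Theorem \ref{th:ld} requires — but this does not affect the argument.)
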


\begin{proof}
Let $\psi \in C(M,\mathbb R)$. Given $\epsilon>0$, let $\mathcal K \subset \mathcal M(X)$ be the compact set
\[
\mathcal K = \left\{\nu \in \mathcal M(X) \hbox{ : } \left|\int \psi \, d\mu - 
\int \psi \, d\mu_{\varphi+f_\xi}\right| \ge \epsilon\right\}.
\]
Using Theorem \ref{th:ld}, we have 
\begin{align*}
&\frac{1}{\pi_\varphi(T,\alpha,\mathbbm{1}_{[a,b]})} 
\sum_{\gamma \in \mathcal P(\alpha)} 
\mathbbm{1}_{[a,b]}(\ell(\gamma)-T)
e^{\int_\gamma \varphi}
\int \psi \, d\mu_\gamma
\\
&=
\frac{1}{\pi_\varphi(T,\alpha,\mathbbm{1}_{[a,b]})} 
\sum_{\substack{\gamma \in \mathcal P(\alpha) \\ \mu_\gamma \notin \mathcal K}} 
\mathbbm{1}_{[a,b]}(\ell(\gamma)-T)
e^{\int_\gamma \varphi}
\int \psi \, d\mu_\gamma
+O(e^{-\eta T}),
\end{align*}
for some $\eta>0$.
Since
\begin{align*}
&\frac{1}{\pi_\varphi(T,\alpha,\mathbbm{1}_{[a,b]})} 
\sum_{\substack{\gamma \in \mathcal P(\alpha) \\ \mu_\gamma \notin \mathcal K}} 
\mathbbm{1}_{[a,b]}(\ell(\gamma)-T)
e^{\int_\gamma \varphi}
\int \psi \, d\mu_\gamma
=
(1-O(e^{-\eta T}))\int \psi \, d\mu_{\varphi+f_\xi}
\\
&+
\frac{1}{\pi_\varphi(T,\alpha,\mathbbm{1}_{[a,b]})} 
\sum_{\substack{\gamma \in \mathcal P(\alpha) \\ \mu_\gamma \notin \mathcal K}} 
\mathbbm{1}_{[a,b]}(\ell(\gamma)-T)
e^{\int_\gamma \varphi}
\left(\int \psi \, d\mu_\gamma -\int \psi \, d\mu_{\varphi+f_\xi}\right),
\end{align*}
we see that
\begin{align*}
\int \psi \, d\mu_{\varphi +f_\xi} -\epsilon
&\le 
\liminf_{T \to \infty} \frac{1}{\pi_\varphi(T,\alpha,\mathbbm{1}_{[a,b]})} 
\sum_{\gamma \in \mathcal P(\alpha)} 
\mathbbm{1}_{[a,b]}(\ell(\gamma)-T)
e^{\int_\gamma \varphi}
\int \psi \, d\mu_\gamma
\\
&\le
\limsup_{T \to \infty} 
\frac{1}{\pi_\varphi(T,\alpha,\mathbbm{1}_{[a,b]})} 
\sum_{\gamma \in \mathcal P(\alpha)} 
\mathbbm{1}_{[a,b]}(\ell(\gamma)-T)
e^{\int_\gamma \varphi}
\int \psi \, d\mu_\gamma
\\
&\le \int \psi \, d\mu_{\varphi+f_\xi} + \epsilon.
\end{align*}
Since $\epsilon>0$ is arbitrary, this completes the proof.
\end{proof}

\section{Linking numbers of knots and helicity}\label{section:linking}

In this section and for the remainder of the paper, $M$ will be a smooth closed connected oriented
$3$-manifold.
\subsection{Vector fields and forms}
We briefly recall some background on vector fields and forms.
Let $M$  have a volume form $\Omega$
and let $X$ be a vector field on $M$ generating a flow $X^t : M \to M$. 
The divergence of $X$ is defined by
\[
L_X\Omega = (\mathrm{div} X) \Omega,
\]
where $L_X$ is the Lie derivative.
We say that $X$ is {\it divergence-free} if $\mathrm{div} X$ is identically zero; this is equivalent to 
the flow $X^t$ being volume-preserving.
For the remainder of the section, we will assume that this holds.

For $0 \le p \le 3$, let 
$\mathcal X^p(M)$ denote the space of (smooth) $p$-forms on $M$. We use the notation
$d : \mathcal X^p(M) \to \mathcal X^{p+1}(M)$ for the exterior derivative, and 
 $i_X : \mathcal X^p(M) \to \mathcal X^{p-1}(M)$ for the interior product.
Since $\Omega$ is a $3$-form, we have that
\begin{equation}\label{eq:switch_interior_product}
 i_X \omega \wedge \Omega = \omega \wedge i_X \Omega
 \end{equation}
for any $\omega\in \mathcal X^1(M)$.
 The Lie derivative, exterior derivative and interior product are related by
{\it Cartan's magic formula}
\begin{equation}\label{eq:cartans_magic_formula}
L_X\omega = i_X d\omega + d(i_X\omega).
\end{equation}


The volume form $\Omega$ gives rise to a volume measure $m$
(normalised to be a probability measure).
We have the following key result.

 \begin{lemma}\label{lem:equiv_of_nullhom}
 $i_X\Omega$ is exact if and only if 
 $\Phi_m=0$.
 \end{lemma}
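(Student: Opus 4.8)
The plan is to unwind both conditions into a statement about the de Rham cohomology class of the $2$-form $i_X\Omega$, and to observe that this class is essentially the winding cycle $\Phi_m$ under Poincar\'e duality. First I would note that, since $X$ is divergence-free, Cartan's magic formula (\ref{eq:cartans_magic_formula}) gives $d(i_X\Omega) = L_X\Omega - i_X(d\Omega) = (\mathrm{div}\,X)\Omega = 0$, so $i_X\Omega$ is always closed and hence defines a class $[i_X\Omega] \in H^2(M,\mathbb R)$. Thus ``$i_X\Omega$ is exact'' simply means $[i_X\Omega]=0$ in $H^2(M,\mathbb R)$.

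Next I would relate $[i_X\Omega]$ to $\Phi_m$. By Poincar\'e duality on the closed oriented $3$-manifold $M$, the class $[i_X\Omega] \in H^2(M,\mathbb R)$ vanishes if and only if its pairing with every class in $H^1(M,\mathbb R)$ vanishes, where the pairing $H^2(M,\mathbb R)\times H^1(M,\mathbb R) \to \mathbb R$ is given by $([\eta],[\omega]) \mapsto \int_M \omega \wedge \eta$. So for a closed $1$-form $\omega$, I would compute $\int_M \omega \wedge i_X\Omega$. Using the identity (\ref{eq:switch_interior_product}), namely $\omega \wedge i_X\Omega = i_X\omega \wedge \Omega = (\omega(X))\,\Omega = f_\omega \, \Omega$, this integral equals $\int_M f_\omega \, \Omega$, which (after the normalisation making $m$ a probability measure, up to the total volume factor, which is irrelevant for vanishing) is exactly $\int \omega(X)\, dm = \langle \Phi_m, [\omega]\rangle$ by the definition of the winding cycle in Section \ref{section:anosov}. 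Therefore $\int_M \omega \wedge i_X\Omega = \mathrm{Vol}(M,\Omega)\,\langle \Phi_m,[\omega]\rangle$, and this holds for every closed $1$-form $\omega$ and depends only on $[\omega]$.

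Putting these together: $i_X\Omega$ is exact $\iff [i_X\Omega] = 0$ in $H^2(M,\mathbb R)$ $\iff \int_M \omega\wedge i_X\Omega = 0$ for all closed $1$-forms $\omega$ (by the non-degeneracy of the Poincar\'e duality pairing) $\iff \langle \Phi_m,[\omega]\rangle = 0$ for all $[\omega]\in H^1(M,\mathbb R)$ $\iff \Phi_m = 0$ (since the duality pairing $H_1(M,\mathbb R)\times H^1(M,\mathbb R)\to\mathbb R$ is non-degenerate). The main obstacle, such as it is, is making the duality step rigorous: one needs the fact that the pairing $H^2(M,\mathbb R)\times H^1(M,\mathbb R)\to\mathbb R$, $([\eta],[\omega])\mapsto\int_M\omega\wedge\eta$, is non-degenerate on a closed oriented $3$-manifold — this is standard Poincar\'e duality together with $\dim H^1 = \dim H^2$, so the argument is essentially a bookkeeping exercise once identity (\ref{eq:switch_interior_product}) is in hand.
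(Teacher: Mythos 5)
Your proof is correct and follows essentially the same route as the paper: show $i_X\Omega$ is closed via Cartan's formula, use the identity $\omega\wedge i_X\Omega = i_X\omega\wedge\Omega$ to identify the pairing of $[i_X\Omega]$ against $H^1(M,\mathbb R)$ with $\langle\Phi_m,[\omega]\rangle$ (up to the normalising volume constant), and conclude by non-degeneracy of Poincar\'e duality. The only difference is that you spell out the non-degeneracy step explicitly, which the paper leaves implicit.
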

 
 \begin{proof}
 Since $X$ is divergence-free, we have $L_X\Omega=0$. Since $d\Omega=0$, Cartan's magic formula
 (\ref{eq:cartans_magic_formula}) gives
 $d(i_X\Omega)=0$, i.e., $i_X\Omega$ is closed.
 Let $[i_X\Omega] \in H^2(M,\mathbb R)$ be its cohomology class; we claim that
 $[i_X\Omega]$ and $\Phi_m$ are Poincar\'e duals.
 To see this, let $\omega$ be a closed $1$-form, then, by (\ref{eq:switch_interior_product}),
  \[
 i_X \omega \wedge \Omega = \omega \wedge i_X \Omega.
 \]
Thus,
\begin{align}
\langle [i_X\Omega],[\omega]\rangle
&=\int_M \omega \wedge i_X \Omega = \int_M i_X \omega \wedge \Omega 
\\
&= \int_M \omega(X) \, \Omega 
= \int \omega(X) \, dm 
= \langle \Phi_m,[\omega]\rangle
\end{align}
where the first term is the pairing of $H^2(M,\mathbb R)$ and $H^1(M,\mathbb R)$.
Therefore, $[i_X\Omega]=0$ if and only if $\Phi_m=0$. 
 \end{proof}
 
 If $i_X\Omega$ is exact then we say that $X$ is null-homologous. 
 Since $m$ is equal to the equilibrium state for $\varphi^u$, Lemma \ref{lem:equiv_of_nullhom}
 and Proposition \ref{prop:hom_full_eq_state} tell us that if $X$ is a null-homologous 
 Anosov flow then it is
 homologically full. (Note that volume-preserving flows are automatically transitive.)
 Recall the function $f:M\to \R^b$ from the proof of Proposition \ref{strictly_convex_and_finite_min}.
 With respect to a given basis $w_1,\ldots,w_b$ for $H^1(M,\R)$,
the component $f_i$ of $f=(f_1,\ldots,f_b)$ is given by $\omega_i(X)$, 
where $\omega_i$ is a closed $1$-form in the cohomology class $w_i$.
 We have that
 \[
 \nabla \beta_{\varphi^u}(0) = \int f \, d\mu_{\varphi^u}
 =\int f \, dm =0,
 \]
 so $\xi(\varphi^u)=0$.
 Thus, for null-homologous flows, a special case of Theorem \ref{weighted-equi} is the following.

 \begin{theorem} \label{weighted-equi-nullhom}
Let $M$ be a closed $X^t : M \to M$ be a null-homologous volume-preserving Anosov flow
on a closed oriented $3$-manifold.
Then the measures
\[
\frac{1}{\pi_{\varphi^u}(T,\alpha,\mathbbm{1}_{[a,b]})} 
\sum_{\gamma \in \mathcal P(\alpha)} 
\mathbbm{1}_{[a,b]} (\ell(\gamma)-T)
e^{\int_\gamma \varphi^u} \mu_\gamma
\]
converge weak$^*$ to $m$, as $T \to \infty$,
and the same holds if we replace $[a,b]$ by $(a,b)$, $(a,b]$ or $[a,b)$.
\end{theorem}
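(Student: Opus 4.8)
The plan is to obtain this as the special case $\varphi = \varphi^u$ of Theorem \ref{weighted-equi}, the only work being to check that the equilibrium state produced there is $m$ itself. First I would verify that the hypotheses of Theorem \ref{weighted-equi} are met: $\varphi^u$ is H\"older continuous, a volume-preserving Anosov flow is transitive (so the standing transitivity assumption holds), and the flow is homologically full. The last point is exactly the observation recorded just before the statement: $m$ coincides with the SRB measure $\mu_{\varphi^u}$ because $X$ preserves the volume, and null-homology of $X$ means precisely $\Phi_m = 0$ by Lemma \ref{lem:equiv_of_nullhom}, so Proposition \ref{prop:hom_full_eq_state}, applied with the H\"older function $\varphi^u$, gives that $X$ is homologically full.

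Next I would identify $\xi(\varphi^u)$, the minimiser of $\beta_{\varphi^u}$ on $H^1(M,\mathbb R)$. With the basis notation of Section \ref{section:therm-yoga} and $f = (f_1,\dots,f_b)$, one has $\nabla\beta_{\varphi^u}(t) = \int f\, d\mu_{\varphi^u + t_1 f_1 + \cdots + t_b f_b}$, so evaluating at $t=0$ gives $\int f\, d\mu_{\varphi^u} = \int f\, dm$, whose $i$-th coordinate is $\langle \Phi_m, w_i\rangle = 0$ since $\Phi_m = 0$. As $\beta_{\varphi^u}$ is strictly convex with a finite minimum, this forces the minimum to be attained at $0$, i.e. $\xi(\varphi^u) = 0$.

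Finally, Theorem \ref{weighted-equi} tells us that the measures converge weak$^*$ to $\mu_{\varphi^u + f_{\xi(\varphi^u)}} = \mu_{\varphi^u + f_0}$, and the same for the half-open and open interval variants. Choosing the representative $0$ of the zero cohomology class shows that $f_0$ is $X$-cohomologous to $0$; since $X$-cohomologous H\"older functions share the same equilibrium state, $\mu_{\varphi^u + f_0} = \mu_{\varphi^u} = m$, which is the claim.

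There is no real obstacle here: all the substance lives in Theorem \ref{weighted-equi} and in the identity $m = \mu_{\varphi^u}$ for volume-preserving Anosov flows. The only steps needing a moment's care are verifying homological fullness via the chain Lemma \ref{lem:equiv_of_nullhom} $\to$ Proposition \ref{prop:hom_full_eq_state}, and justifying that passing from $\varphi^u + f_0$ to $\varphi^u$ does not change the equilibrium state; when the first Betti number is $0$, for instance on a real homology $3$-sphere, these reductions are vacuous and the conclusion is already contained in Theorem \ref{weighted-nonnull-equi}.
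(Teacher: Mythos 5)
Your proposal is correct and follows exactly the route the paper takes: deduce homological fullness from Lemma \ref{lem:equiv_of_nullhom} together with Proposition \ref{prop:hom_full_eq_state}, use $\nabla\beta_{\varphi^u}(0)=\int f\,dm=0$ and strict convexity to get $\xi(\varphi^u)=0$, and then read off the statement as the special case $\varphi=\varphi^u$ of Theorem \ref{weighted-equi} with limiting measure $\mu_{\varphi^u}=m$. No gaps.
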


This implies Theorem \ref{weighted-equi-nullhom-intro} in the introduction.
 
 \medskip
Now suppose that $M$ is equipped with a Riemannian metric $\rho$
(consistent with the volume form
$\Omega$). 
This induces an inner product $\langle \cdot,\cdot \rangle_x$
on each fibre $T_x^*M$ (and its exterior powers)
and hence an inner product 
\[
\langle \langle \omega,\eta \rangle \rangle = \int_M \langle\omega_x,\eta_x\rangle_x \, \Omega
\]
on
$\mathcal X^p(M)$, $0 \le p \le 3$.
The {\it Hodge star} $* : \mathcal X^p(M) \to \mathcal X^{3-p}(M)$ is defined by
\[
\int_M \omega \wedge *\eta = \langle \langle \omega,\eta \rangle \rangle
\]
and
the {\it Laplacian} $\Delta : \mathcal X^p(M) \to \mathcal X^b(M)$ is defined by
$\Delta = d^*d+dd^*$, where $d^*$ is the {\it codifferential} -- the adjoint of $d$  with respect to $\langle\langle\cdot,\cdot\rangle\rangle$. Then a $p$-form $\omega$
is {\it harmonic} if $\Delta \omega=0$ and we let $\mathcal X_{\mathrm{harm}}^p(M)$
denote the space of harmonic $p$-forms. Harmonic forms are closed and, by the Hodge theorem,
the map $\mathcal X_{\mathrm{harm}}^p(M) \to H^p(M,\mathbb R) : \omega \mapsto [\omega]$
is an isomorphism.
Finally, $\mathcal X_{\mathrm{harm}}^p(M)$ is a closed subspace of $\mathcal X^p(M)$ and we let 
$H : \mathcal X^p(M) \to \mathcal X_{\mathrm{harm}}^p(M)$ denote
the orthogonal projection with respect to $\langle \langle \cdot,\cdot \rangle \rangle$.
(Strictly speaking, one should complete $\mathcal X^p(M)$ with respect to the inner
product to get a Hilbert space, in which case $d$ and $\Delta$ become densely defined.)

\subsection{Linking numbers and linking forms}
Let $M$ be a closed oriented $3$-manifold.
A knot is an embedding of $S^1$ in $M$. In the classical situation, where $M = S^3$, we may define 
the linking number of any two disjoint knots, $\gamma$ and $\gamma'$, as follows. 
(The symbols $\gamma, \gamma'$  will refer to the embeddings themselves, but also the images of these embeddings.) Let $S$ be an oriented surface in $M$ whose boundary is $\gamma$.
Then we define the linking number $\mathrm{lk}(\gamma,\gamma') \in \mathbb Z$ to be the algebraic intersection number of 
$\gamma'$ with this $S$.
In this setting (with $S^3$ thought of as the compactification of $\mathbb R^3$), the linking number is also given by the 
\textit{Gauss linking integral}, 
$$
\mathrm{lk}(\gamma,\gamma')=\frac{3}{4\pi}\int_{S^1}\int_{S^1}
\frac{\dot{\gamma}(s)\times \dot{\gamma'}(t)}{\|\gamma(s)-\gamma'(t)\|^3}\cdot (\gamma(s)-\gamma'(t))
\, ds \,dt.
$$
For details on these definitions, see Chapter 5, part D of \cite{rolfsen}. 

We wish to work with more general closed oriented $3$-manifolds
$M$. In this case, we can define the linking number of disjoint knots $\gamma$ and $\gamma'$
provided at least one of them is null-homologous in $H_1(M,\mathbb R)$,
which is equivalent to null-homologous in $H_1(M,\mathbb Q)$. 
Suppose that $\gamma$ is integrally null-homologous; then there exists an oriented surface $S$
whose boundary is $\gamma$ and
 $\mathrm{lk}_{S}(\gamma,\gamma') \in \mathbb Z$ is defined, as above, to be the algebraic intersection number
of $\gamma'$ and $S$. (If $\gamma$ is rationally null-homologous; then there exists $k \ge 1$ such that $\gamma^k$ is integrally 
null-homologous and we obtain a rational linking number for $\gamma$ by division by $k$.)
If $\gamma'$ is also null-homologous, the result is independent of the choice of $S$ and we 
denote it by $\mathrm{lk}(\gamma,\gamma')$.
In particular, if $M$ is a real homology $3$-sphere 
then each pair of disjoint knots has a well-defined rational linking number. 
More generally, if $\gamma'$ is not null-homologous and $S'$ is another choice of surface for $\gamma$ then
\[
\mathrm{lk}_{S'}(\gamma,\gamma') - \mathrm{lk}_S(\gamma,\gamma') = \langle [\gamma'],
c_{S'-S} \rangle,
\]
where $c_{S'-S}$ is the Poincar\'e dual of the the class in $H_2(M,\mathbb Z)$ defined by $S'-S$.
However, one can avoid these choices by defining linking numbers via a linking form (determined by the Riemannian metric).


First, we define a \textit{double form} on $M$ (see Section 7 of \cite{derham}). 
This is essentially a differential form whose coefficients are other differential forms, 
rather than smooth functions. Let $x^1,x^2,x^3$ be local co-ordinates in $U\subset M$, 
and $y^1,y^2,y^3$ in $U'\subset M.$ A differential form $\alpha$ of degree $p$, with 
coefficients which are differential forms of degree $q$, is represented for $x\in U$ by 
$$
\alpha(x)=\sum_{i_1<\ldots <i_p}\alpha_{i_1\ldots i_p}(x) \, dx^{i_1}
\wedge \cdots\wedge  dx^{i_p},
$$ 
where each $\alpha_{i_1\ldots i_p}$ is represented for $y\in U'$ by 
$$
\alpha_{i_1,\ldots i_p}(x,y)
=\sum_{j_1<\ldots < j_q}a_{i_1\ldots i_pj_1\ldots j_q}(x,y) \, dy^{j_1}
\wedge\cdots \wedge \, dy^{j_q},
$$ 
where the $a_{i_1\ldots i_pj_1\ldots j_q}$ are smooth functions $U\times U'\to \R$. When defined in this way, we call $\alpha$ a double form, and write 
$$
\alpha(x,y)
=\sum_{\substack{i_1<\ldots < i_p\\ j_1<\ldots <j_q}}
a_{i_1\ldots i_pj_1\ldots j_q}(x,y)(dx^{i_1}\wedge \cdots
\wedge   dx^{i_p})(dy^{j_1}\wedge\cdots \wedge dy^{j_q}).
$$ 

We integrate double forms by integrating as single forms in $x$ and then in $y$. By this we mean that given chains $c_1,c_2$ in $M$, $\int_{x\in c_1}\alpha(x,y)$ is a single form which can again be integrated as usual. The integral of $\alpha$ over $c_1\times c_2$  is then defined by 
$$
\int_{c_1\times  c_2}\alpha=\int_{y\in c_2}\int_{x\in c_1}\alpha(x,y).
$$ 

We will be working with forms where $p=q=1$. In this case, $\alpha$ is called a $(1,1)$-form, and is written 
$$
\alpha(x,y)=\sum_{ij}a_{ij}(x,y) \, dx^i \, dy^j.
$$ 
By the above definition, we have that 
$$
\int_{c_1\times c_2}\alpha
=\int_{S^1}\int_{S^1}\alpha(c_1(s),c_2(t))(\dot{c_1}(s),\dot{c_2}(t)) \, ds \, dt,
$$ 
for any two closed curves $c_1, c_2.$ 

With these preliminaries in place, we define a linking form to be any $(1,1)$-form $L$ on $M$ which satisfies that whenever $\gamma,\gamma'$ are disjoint null-homologous knots,
$$\int_{\gamma\times\gamma'} L=\lk(\gamma,\gamma').$$ Such forms exist for $M$, and one class of examples is given in \cite{kotschick-vogel}
(generalising \cite{vogel} for rational homology $3$-spheres). 

To outline this construction, we must give a brief description of the \textit{Green operator}
on $M$, 
and an associated $(1,1)$-form, known as the \textit{Green kernel}. 
The Green operator $G$ is a differential operator which acts as a partial inverse to the 
Laplacian $\Delta : \mathcal X^1(M) \to \mathcal X^1(M)$. 
Recall that $H$ denotes the orthogonal projection onto the 
space of harmonic $1$-forms,
$\mathcal X_{\mathrm{harm}}^1(M)=\ker(\Delta)$. 
Then
$G:\mathcal{X}^1(M)\rightarrow (\mathcal{X}_{\mathrm{harm}}^1(M))^\perp$ is the unique operator satisfying 
$$
G\circ\Delta=\Delta\circ G=\mathrm{Id}-H\text{ and }G\circ H=0.
$$ 
The existence of $G$ is shown on page 134 of \cite{derham}. 

There is a $(1,1)$-form $g(x,y)$ which is the kernel (in the sense of \cite{derham}, Section 17) of 
$G$, i.e.
$$
G(\omega)(x)=\int_{x\in M} \omega(y) \wedge *_yg(x,y),
$$
where $*_y$ denotes the Hodge star in the $y$ co-ordinate.
The 
linking form,
which we shall henceforth refer to as the \textit{Kotschick--Vogel linking form}, is the $(1,1)$-form
$$
L(x,y):=*_yd_yg(x,y),
$$ 
where $d_y$ denotes the exterior derivative in the $y$ co-ordinate.

The following appears as Proposition 1 in \cite{kotschick-vogel}. 

\begin{proposition}[Kotschick and Vogel, \cite{kotschick-vogel}]\label{prop:linking_form} 
The double form
$L(x,y)$ is a linking form. Furthermore, for every $1$-form $\alpha$, there exists a function 
$h : M \to \mathbb R$ such that
\[
\int_{y \in M} L(x,y) \wedge d\alpha(y) =
\alpha(x) -H(\alpha)(x) + dh(x).
\]
\end{proposition}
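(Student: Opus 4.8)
The plan is to verify directly that $L(x,y) = *_y d_y g(x,y)$ is a linking form, and then to establish the functional identity by unwinding the definition of the Green operator. For the first part, I would use the defining property of the Green kernel together with the fact that $\mathrm{lk}(\gamma,\gamma')$ can be computed by choosing a surface $S_{\gamma^k}$ with $\partial S_{\gamma^k}=\gamma^k$ (for suitable $k\ge 1$) and taking an intersection number with $\gamma'$. The key analytic input is that if $\gamma$ is null-homologous, then the $2$-form $*\eta_\gamma$ representing (via Poincar\'e duality) the cycle $\gamma$ is exact, so $\gamma = dS_{\gamma}$ for a current $S_\gamma$; applying $\Delta\circ G = \mathrm{Id} - H$ to the relevant $1$-form and pairing against $\gamma'$ converts the de~Rham intersection pairing $\int_{\gamma\times\gamma'}*_y d_y g(x,y)$ into the classical intersection number $S_{\gamma}\cdot\gamma'$, which by definition is $\mathrm{lk}(\gamma,\gamma')$. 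I would lean on the fact that this is exactly Proposition~1 of \cite{kotschick-vogel}, so this half is essentially a citation; the job is to set up notation consistently with the present paper's conventions (in particular the normalisation of $m$ and the sign conventions for $*$ and $d$).

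For the functional identity, I would start from the formula $G(\omega)(x) = \int_{y\in M}\omega(y)\wedge *_y g(x,y)$ and apply it with $\omega = d\alpha$. Then $G(d\alpha)(x) = \int_{y\in M} d\alpha(y)\wedge *_y g(x,y)$, and I would integrate by parts in $y$: since $M$ is closed, $\int_{y\in M} d\alpha(y)\wedge *_y g(x,y) = \pm\int_{y\in M}\alpha(y)\wedge d_y\bigl(*_y g(x,y)\bigr)$, up to a sign I would pin down from the degrees ($\alpha$ is a $1$-form, $*_y g(x,y)$ is a $1$-form in $y$, so $d\alpha\wedge *g$ and $\alpha\wedge d(*g)$ are both $3$-forms in $y$, and Stokes gives the sign). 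Comparing with $L(x,y) = *_y d_y g(x,y)$ — and using that $*_y$ is an isometry so that $\alpha\wedge *_y(\text{something})$ relates to the pointwise inner product — one rewrites $\int_{y\in M} L(x,y)\wedge d\alpha(y)$ as $\pm G(\Delta\gamma\text{-type expression})$. The cleanest route: apply $G$ to $\Delta$ of the appropriate form. We want to produce $\alpha - H(\alpha) + dh$; since $G\circ\Delta = \mathrm{Id} - H$, it suffices to show $\int_{y\in M} L(x,y)\wedge d\alpha(y)$ differs from $G(\Delta\alpha')(x)$ by an exact form, where $\alpha'$ is cohomologous to $\alpha$ in the appropriate sense; more precisely one shows $\int_{y\in M} L(x,y)\wedge d\alpha(y) = G(d^*d\alpha)(x) + (\text{exact})$ using $\Delta = dd^* + d^*d$ and the fact that $G(dd^*d\alpha)$ contributes only the $d^*$-exact-plus-exact part. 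Then $G(d^*d\alpha) = G(\Delta\alpha) - G(dd^*\alpha) = \alpha - H(\alpha) - G(dd^*\alpha)$, and $G(dd^*\alpha) = d\bigl(G(d^*\alpha)\bigr)$ since $G$ commutes with $d$ on the appropriate domain (both $G$ and $d$ being built from $\Delta$); setting $h = -G(d^*\alpha)$ — or rather $h$ the function with $dh$ equal to this term, after noting $G(d^*\alpha)$ is a function, i.e. a $0$-form, since $d^*\alpha$ is a $0$-form — gives the claimed identity.

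The main obstacle I anticipate is bookkeeping of signs and of which variable the operators act in, since $g(x,y)$ is a double form and $*_y, d_y$ act only on the $y$-slot while the left-hand side is to be read as a form in $x$; one has to be careful that "integrate out $y$" genuinely produces the operator $G$ acting on $d\alpha$ and that the resulting object is indeed $\alpha(x) - H(\alpha)(x) + dh(x)$ as a form in $x$ and not something transposed. A secondary subtlety is that $G$ is only a \emph{partial} inverse to $\Delta$ (it kills harmonic forms and lands in the orthogonal complement), so one must track the $H(\alpha)$ correction term honestly rather than treating $G$ as $\Delta^{-1}$. Since the statement is quoted verbatim from \cite{kotschick-vogel}, for the write-up I would present the computation as a sketch, cite their Proposition~1 for the linking-form property, and give the integration-by-parts and $G$-commutes-with-$d$ argument in enough detail to make the appearance of $h$ and $H(\alpha)$ transparent.
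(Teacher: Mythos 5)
The paper offers no proof of this proposition: it is quoted verbatim as Proposition 1 of Kotschick--Vogel, so there is no in-paper argument to compare yours against, and deferring to the citation (as both you and the authors do for the assertion that $L$ is a linking form) is appropriate. Your sketch of the second identity is essentially the Kotschick--Vogel computation and lands on the correct answer, including $h=-G(d^*\alpha)$. One step should be repaired, though: you cannot apply the kernel formula $G(\omega)(x)=\int_{y}\omega(y)\wedge *_y g(x,y)$ with $\omega=d\alpha$, since $g(x,y)$ is the $(1,1)$-kernel, so $*_yg$ is a $2$-form in $y$ and $d\alpha\wedge *_yg$ is a $4$-form in $y$ on a $3$-manifold, hence identically zero; $G(d\alpha)$ would require the $(2,2)$-kernel. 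The correct bridge is one line of adjointness rather than integration by parts: $\int_y L(x,y)\wedge d\alpha(y)=\int_y d\alpha(y)\wedge *_y\bigl(d_yg(x,y)\bigr)=\langle\langle d\alpha, d_yg(x,\cdot)\rangle\rangle=\langle\langle d^*d\alpha, g(x,\cdot)\rangle\rangle=G(d^*d\alpha)(x)$, with no extra exact correction needed at this stage; from there $G(d^*d\alpha)=G(\Delta\alpha)-G(dd^*\alpha)=\alpha-H(\alpha)-dG(d^*\alpha)$ exactly as you conclude.
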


For the rest of the paper, we will write
\[
\mathrm{lk}(\gamma,\gamma') = \int_{\gamma \times \gamma'} L
\]
for arbitrary disjoint knots $\gamma,\gamma'$.

\subsection{Helicity}\label{subsec:helicity}
We recall the definition of helicity. 
Let $M$ be a closed oriented $3$-manifold with (normalised) volume
form $\Omega$. Let $X$ be a divergence-free vector field on $M$ with
associated volume-preserving flow $X^t: M \to M$.
We assume that $X$ is null-homologous, i.e. that $i_X\Omega$ is exact.
Hence, there exists a $1$-form $\alpha$,
called the \textit{form potential} of $X$, such that $i_X\Omega = d\alpha$.
The helicity $\mathcal H(X)$
of $X$ is then defined by 
$$
\mathcal H(X)=\int_M\alpha\wedge i_X\Omega.
$$ 
The form potential is defined up to the addition of a closed $1$-form but, by Lemma
\ref{lem:equiv_of_nullhom},
\[
\int_M \omega \wedge i_X\Omega = \int_M \omega(X) \, \Omega =0,
\]
so the helicity is independent of this choice.

A convenient way 
of evaluating helicity is given by the \textit{musical isomorphisms} in Riemannian geometry.  
Given a point $x\in M$, each $u\in T_xM$ has a corresponding covector 
$u^\flat\in T_x^*M$ determined uniquely by $u^\flat(w)=\rho(u,w)$ for all $w\in T_xM$. 
Letting $\hat{\rho}_x=(\rho_{ij}(x))_{ij}$, the matrix of metric components at $x$, 
we have that $u^\flat=(\hat{\rho}_xu)^T.$ As $\hat{\rho}_x$ is invertible, with inverse 
$\hat{\rho}_x^{-1}=(\rho^{ij}(x))_{ij}$, we have $u=\hat{\rho}_x^{-1}(u^\flat)^T$, 
and we say $u=(u^\flat)^\sharp$. 
We will use raised indices for the components of a tangent vector and lowered indices for the components of its corresponding covector, meaning $u^i$ is the $i$th component of $u$, and $u_i$ is $(\g_xu)^i.$ With this, we can define the curl of a vector field $Z$ to be the unique vector field with $Z^\flat$ as a form potential. That is, 
$$
i_{\curl Z} \Omega=d(Z^\flat).
$$ 
Thus the components of the curl are given by 
$$
(\curl Z(y))^l=(-1)^{l+1}\left(\frac{\partial Z_{j(l)}}{\partial y^{k(l)}}(y)
-\frac{\partial Z_{k(l)}}{\partial y^{j(l)}}(y)\right),
$$ 
where $k(l)<j(l)$ and $\{k(l),j(l)\}=\{1,2,3\}\setminus\{l\}.$ 

When $\alpha$ is a form potential for $X$, $\curl(\alpha^\sharp)=X$, and we call $\alpha^\sharp$ a \textit{vector potential} for $X$. We see that 
$$
\mathcal H(X)=\int_M\alpha\wedge d\alpha=\int_M\alpha(\curl(\alpha^\sharp))\, \Omega
=\int_M\rho(X,\alpha^\sharp) \, \Omega.
$$ 
Thus the helicity can be thought of as a scalar product $\<X,\curl^{-1}X\>$ of $X$ and its potential field. Here the curl is not invertible, but we abuse notation due to independence of the preimage choice.

\section{Periodic orbits and helicity}\label{section:per_orbits_helicity}

We restrict now to the case where $X^t : M \to M$ is a homologically full transitive 
Anosov flow on a closed oriented $3$-manifold and consider its periodic orbits as knots.
It is interesting to ask how they link with other periodic orbits as the period increases. 
As discussed above, to define a linking number, at least one of the knots needs to be null-homologous
in $H_1(M,\mathbb R)$.
We now consider the sets of periodic orbits $\mathcal P_T(0)$ and $\mathcal P_{T+1}$ 
defined in the introduction.
Since $\mathcal P_T(0)$ and $\mathcal P_{T+1}$ are disjoint, and $\mathcal P_T(0)$ consists of null-homologous orbits, the linking number of a
pair of periodic orbits from these two collections are well-defined. 

\begin{remark}\label{remark:formulation}
The choice of intervals
$(T-1,T]$ and $(T,T+1]$ is somewhat arbitrary. The results below will still hold
if we replace them with $[T+a,T+b]$ and $[T+a',T+b']$, for any $a<b$ and $a'<b'$, provided
$[a,b]$ and $[a',b']$ are disjoint, and we can replace any $[$ with $($ and any $]$ with $)$.
Alternatively, we could use the same interval for each pair and discard pairs which match an
orbit with itself. The latter formulation requires a slightly more involved analysis in Section 9
but the approach used there carries through.
\end{remark}

We define average linking numbers
over the sets of orbits $\P_{T}(0)$ and $\mathcal P_{T+1}$, 
weighted by a H\"older continuous function 
$\varphi :M \to \R$, by
\[
\mathscr L_\varphi(T):=\frac{\displaystyle \sum_{\gamma\in \P_{T}(0),\gamma'\in \mathcal P_{T+1}}
\frac{\mathrm{lk}(\gamma,\gamma')}{\ell(\gamma) \ell(\gamma')}
\exp\left(\int_\gamma \varphi+\int_{\gamma'} \varphi\right)}
{\displaystyle \sum_{\gamma\in \P_{T}(0),\gamma'\in \mathcal P_{T+1}}
\exp\left(\int_\gamma \varphi+\int_{\gamma'} \varphi\right)}
\]
(It will become clear from our results that dividing by the periods of the orbits gives the correct normalisation.)

By Proposition \ref{prop:linking_form}, the linking number of two periodic orbits $\gamma,$ $\gamma'$ as above is given by 
$$
\frac{\mathrm{lk}(\gamma,\gamma')}{\ell(\gamma)\ell(\gamma')}=\int L(x,y)(X(x),X(y))\, d(\mu_\gamma\times\mu_{\gamma'})(x,y).
$$ 
So that we can consider integrals of functions, rather than forms, 
we define $\Lambda: (M\times M)\setminus \Delta(M) \rightarrow \R$ by 
$$
\Lambda(x,y):=L(x,y)(X(x),X(y)),
$$  
where \[
\Delta(M) =\{(x,x) \in M \times M \hbox{ : } x \in M\}
\]
is the diagonal in $M\times M$. 

We recall that if $b = \dim H_1(M,\mathbb R) \ge 1$ then $\xi(\varphi) \in H^1(M,\mathbb R)$
and the function $f_{\xi(\varphi)}$ are defined in Section \ref{section:therm-yoga}.
If $b=0$, we set $\xi(\varphi)=0$ and $f_{\xi(\varphi)}=0$. Our main result is the following.

\begin{theorem}\label{thm:main-homfull} 
Let $X^t : M \to M$ be a homologically full transitive Anosov flow on a closed oriented $3$-manifold
and let
$\varphi : M \to \mathbb R$ be a H\"older continuous function.
Then
\[
\lim_{T \to \infty} \mathscr L_\varphi(T) = 
\int \Lambda \, d(\mu_{\varphi + f_{\xi(\varphi)}} \times \mu_{\varphi}).
\]
\end{theorem}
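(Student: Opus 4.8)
The plan is to reduce the statement to the two weighted equidistribution theorems already established (Theorem~\ref{weighted-equi} and Theorem~\ref{weighted-nonnull-equi}) by rewriting $\mathscr L_\varphi(T)$ as an integral of $\Lambda$ against a product of weighted orbital measures, and then to pass to the limit. Concretely, using the linking-form identity
\[
\frac{\mathrm{lk}(\gamma,\gamma')}{\ell(\gamma)\ell(\gamma')} = \int \Lambda \, d(\mu_\gamma \times \mu_{\gamma'}),
\]
one sees that $\mathscr L_\varphi(T) = \int \Lambda \, d(\theta_T \times \theta_T')$, where $\theta_T$ is the normalised $\varphi$-weighted sum of $\mu_\gamma$ over $\gamma \in \mathcal P_T(0)$ and $\theta_T'$ is the normalised $\varphi$-weighted sum of $\mu_{\gamma'}$ over $\gamma' \in \mathcal P_{T+1}$. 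By Theorem~\ref{weighted-equi} (with $\alpha=0$, or the degenerate $b=0$ case reducing to Theorem~\ref{weighted-nonnull-equi}), $\theta_T \to \mu_{\varphi+f_{\xi(\varphi)}}$ weak$^*$, and by Theorem~\ref{weighted-nonnull-equi}, $\theta_T' \to \mu_\varphi$ weak$^*$. Hence $\theta_T \times \theta_T' \to \mu_{\varphi+f_{\xi(\varphi)}} \times \mu_\varphi$ weak$^*$ on $M\times M$.

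The genuine obstacle is that $\Lambda$ is \emph{not} continuous on all of $M\times M$: it blows up along the diagonal $\Delta(M)$, where the Green kernel $g(x,y)$ (and hence $L(x,y)=*_yd_yg(x,y)$) has a singularity. Therefore one cannot simply quote the portmanteau theorem. I would handle this by establishing a quantitative bound on the growth of $\Lambda(x,y)$ as $(x,y)\to\Delta(M)$ — of the type one expects for a Green kernel on a $3$-manifold, giving $|\Lambda(x,y)| = O(d(x,y)^{-2})$ or similar — together with an upper bound on how much mass the measures $\theta_T\times\theta_T'$ can place in a shrinking neighbourhood of $\Delta(M)$. The latter is exactly the kind of estimate that is obtained in Contreras~\cite{contreras} for hyperbolic flows on $S^3$: because $\gamma$ and $\gamma'$ have least periods in the disjoint intervals $(T-1,T]$ and $(T,T+1]$, a self-intersection or near-approach of the union of orbits is controlled by shadowing and the local product structure, and the equidistribution of the weighted orbital measures to equilibrium states (which are fully supported but with controlled local dimension) bounds the contribution of orbit pairs that come $\epsilon$-close. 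This is the step I expect to require the most work, and it is presumably why the proof of Theorem~\ref{thm:main-homfull} is deferred to Section~\ref{section:proof_of_main}, where the bounds on the linking form are derived.

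Granting those two ingredients, the conclusion follows by a standard truncation argument: fix $\epsilon>0$, split $\Lambda = \Lambda \mathbbm{1}_{d(x,y)\ge\epsilon} + \Lambda\mathbbm{1}_{d(x,y)<\epsilon}$ (smoothing the cutoff if one wants strict continuity). On the region $d(x,y)\ge\epsilon$ the integrand is continuous and bounded, so weak$^*$ convergence of $\theta_T\times\theta_T'$ gives convergence of its integral to $\int_{d(x,y)\ge\epsilon}\Lambda\,d(\mu_{\varphi+f_{\xi(\varphi)}}\times\mu_\varphi)$. On the region $d(x,y)<\epsilon$, the singularity bound on $\Lambda$ combined with the near-diagonal mass bound for $\theta_T\times\theta_T'$ (uniformly in $T$) shows the contribution is $O(\epsilon^\kappa)$ for some $\kappa>0$, and the same bound applied to the limiting product measure (whose near-diagonal mass is likewise controlled, the equilibrium states being non-atomic with finite local dimension) shows the tail of the limiting integral is $O(\epsilon^\kappa)$ as well. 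Letting $T\to\infty$ and then $\epsilon\to 0$ yields $\lim_{T\to\infty}\mathscr L_\varphi(T) = \int\Lambda\,d(\mu_{\varphi+f_{\xi(\varphi)}}\times\mu_\varphi)$, as claimed. One should also check at the outset that $\Lambda$ is integrable against the limiting product measure so that the right-hand side is finite — this again follows from the singularity bound together with the dimension bound on the equilibrium states.
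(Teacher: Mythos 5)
Your overall architecture coincides with the paper's: rewrite $\mathscr L_\varphi(T)$ as $\int\Lambda\,d(\mu^0_{\varphi,T}\times\mu_{\varphi,T+1})$, invoke Theorems \ref{weighted-equi} and \ref{weighted-nonnull-equi} for the weak$^*$ limits, and control the singularity of $\Lambda$ on the diagonal by a pointwise bound plus a near-diagonal mass estimate and a truncation. But there is a genuine gap at the quantitative heart of the argument, precisely at the step you leave as a placeholder. The bound you propose, $|\Lambda(x,y)|=O(r(x,y)^{-2})$, is the naive estimate for $*_yd_yg(x,y)$ (the Green kernel is $O(1/r)$, so its derivative is $O(1/r^2)$), and it is \emph{not} sufficient: the local dimension lower bound available for the equilibrium states and, uniformly in $T$, for the orbital measures is only $1+\epsilon/\log\lambda$ (Lemma \ref{lemma:I_integrable}), which by Contreras's criterion (Lemma \ref{lemma:O(1/r)_integrable}) allows one to integrate $1/r$ but not $1/r^2$. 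The paper's Lemma \ref{lemma:I=O(1/r)} gains the missing power of $r$ from a cancellation specific to the contraction $L(x,y)(X(x),X(y))$: the leading term is a triple product containing $\hat\rho X(y)\times\bigl(\hat\rho X(x)+O(r)\bigr)$, and Taylor-expanding $X(x)=X(y)+O(r)$ makes this cross product $O(r)$ --- the same cancellation that makes the Gauss linking integrand $O(1/r)$ rather than $O(1/r^2)$. Without identifying this, your truncation argument does not close.

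Second, the near-diagonal mass bound ``uniformly in $T$'' that you correctly flag as necessary is not delivered by ``the equilibrium states being non-atomic with finite local dimension'': non-atomicity gives nothing quantitative, and what is needed is local dimension strictly and uniformly greater than $1$, for the weighted orbital measures themselves and not only for the limit measure. The paper obtains this from a Gibbs-type upper bound on Bowen balls for the weighted orbital measures (Lemma \ref{lemma:Franco}, an adaptation of Franco's argument, combined with Lemma \ref{lem:coho-to-neg} to convert $\int_0^L\varphi-P(\varphi)L$ into $-\epsilon L$ up to a coboundary), and then converts Bowen balls into metric balls by thickening in the flow direction. Finally, for the innermost region (points within $\delta/2\lambda^T$ of the diagonal) the paper shows the contribution is exactly zero by expansivity, since the orbits in $\mathcal P_T(0)$ and $\mathcal P_{T+1}$ are distinct; you gesture at this correctly, and it is the one place where the disjointness of the two period windows is actually used.
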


As a consequence, for null-homologous volume-preserving flows,
we can obtain the helicity $\mathcal{H}(X)$ as the limit of appropriately weighted 
averages of linking numbers.

\begin{theorem}\label{thm:hel-homfull}
Let $X^t : M \to M$ be a null-homologous volume-preserving Anosov flow on a 
closed oriented $3$-manifold.
Then 
$$
\mathcal H(X) 
= \lim_{T \to \infty} \mathscr L_{\varphi^{u}}(T).
$$ 
\end{theorem}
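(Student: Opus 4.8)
The plan is to derive this from Theorem \ref{thm:main-homfull} by specialising to $\varphi=\varphi^u$ and then identifying the resulting double integral with the helicity. First I would check that the hypotheses of Theorem \ref{thm:main-homfull} are met: a null-homologous volume-preserving Anosov flow is transitive, and by Lemma \ref{lem:equiv_of_nullhom} together with Proposition \ref{prop:hom_full_eq_state} it is homologically full. Since the flow preserves $m$, the SRB measure $\mu_{\varphi^u}$ equals $m$; and, as observed after Lemma \ref{lem:equiv_of_nullhom}, $\nabla\beta_{\varphi^u}(0)=\int f\,dm=0$, so $\xi(\varphi^u)=0$ and $f_{\xi(\varphi^u)}=0$. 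Theorem \ref{thm:main-homfull} then yields $\lim_{T\to\infty}\mathscr L_{\varphi^u}(T)=\int\Lambda\,d(m\times m)$, so everything comes down to showing
\[
\int\Lambda\,d(m\times m)=\mathcal H(X).
\]

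To evaluate this integral I would use the second (``Furthermore'') part of Proposition \ref{prop:linking_form}. Fix a form potential $\alpha$ for $X$, so that $d\alpha=i_X\Omega$. For fixed $x$, view $L(x,\cdot)$ as a $1$-form in the $y$-variable whose coefficients are $1$-forms in $x$; then the pointwise identity \eqref{eq:switch_interior_product}, applied in the $y$-variable, gives $L(x,y)\wedge d\alpha(y)=L(x,y)\wedge i_{X(y)}\Omega=\bigl(i_{X(y)}L(x,y)\bigr)\wedge\Omega(y)$, whose integral over $y\in M$, contracted against $X(x)$, is precisely $\int_M\Lambda(x,y)\,dm(y)$ (with $\Omega$ normalised so as to represent $m$). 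By Proposition \ref{prop:linking_form} this equals $\alpha(X(x))-H(\alpha)(X(x))+dh(X(x))$ for some $h\in C^\infty(M)$. Integrating over $x$ against $m$, the term $\int_M dh(X)\,dm=\int_M L_Xh\,dm$ vanishes by $X^t$-invariance of $m$, and the term $\int_M H(\alpha)(X)\,dm=\langle\Phi_m,[H(\alpha)]\rangle$ vanishes because $H(\alpha)$ is closed and $\Phi_m=0$ by Lemma \ref{lem:equiv_of_nullhom}. What remains is $\int_M\alpha(X)\,\Omega=\int_M\alpha\wedge i_X\Omega=\mathcal H(X)$, again by \eqref{eq:switch_interior_product}.

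The step I expect to require the most care is justifying these manipulations in the presence of the singularity of $L(x,y)$ along the diagonal: the linking form is $*_yd_yg(x,y)$, and $d_yg(x,y)$ blows up like $\|x-y\|^{-2}$, so one must verify that $y\mapsto\Lambda(x,y)$ is genuinely $\Omega$-integrable (it is, since $\|x-y\|^{-2}$ is integrable in dimension $3$), that the interchange of $y$-integration with contraction against $X(x)$ is legitimate, and that Proposition \ref{prop:linking_form} is being applied to the $1$-form $\alpha$ as stated. A secondary bookkeeping point is keeping track of orientation and normalisation conventions so that \eqref{eq:switch_interior_product} may indeed be applied fibrewise in the $y$-variable with coefficients that are forms in $x$; this is routine but should be done explicitly.
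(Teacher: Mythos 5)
Your proposal is correct and follows essentially the same route as the paper: specialise Theorem \ref{thm:main-homfull} using $\mu_{\varphi^u}=m$ and $\xi(\varphi^u)=0$, then identify $\int\Lambda\,d(m\times m)$ with $\mathcal H(X)$ via Proposition \ref{prop:linking_form}, with the harmonic term killed by $\Phi_m=0$ and the exact term killed by invariance of $m$. The only difference is cosmetic (you run the final computation from $\int\Lambda\,d(m\times m)$ towards $\mathcal H(X)$ rather than the reverse), and your remark about integrability near the diagonal is handled elsewhere in the paper by the bound $|\Lambda(x,y)|=O(1/r(x,y))$.
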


We will prove Theorem \ref{thm:hel-homfull} assuming we have proved Theorem \ref{thm:main-homfull}. The proof of Theorem \ref{thm:main-homfull} appears in the next section.

\begin{proof}[Proof of Theorem \ref{thm:hel-homfull}]
Since $X^t$ is volume-preserving, $\mu_{\varphi^{u}}=m$ and,
since $X$ is null-homologous, $\Phi_m=0$.
We then have, 
\[
\nabla \beta_\varphi(0) = \int f \, d\mu_{\varphi^{u}} = \int f \, dm = 0,
\]
where, again, $f : M \to \R^b$ is the function defined in the proof of Proposition \ref{strictly_convex_and_finite_min},
so $\xi(\varphi^{u})=0$.
Hence we can apply Theorem \ref{thm:main-homfull} to conclude that
$\mathscr L_{\varphi^{u}}(T)$ converges to $ \int \Lambda \, d(m \times m)$.
To complete the proof, we need to show that this integral is equal to the helicity $\mathcal H(X)$.

Let $\alpha$ be a $1$-form such that $d\alpha = i_X\Omega$. Then, using 
Proposition \ref{prop:linking_form},
\begin{align*}
\mathcal H(X) &= \int_M \alpha \wedge i_X\Omega
\\
&=\int_{x \in M} \left(\left(\int_{y \in M} L(x,y) \wedge i_X\Omega(y)\right)
+ H(\alpha)(x) - dh(x)\right) \wedge i_X\Omega(x).
\end{align*}
Now, since $\Phi_m=0$, we have both
\[
\int_{M} H(\alpha) \wedge i_X\Omega = \int H(\alpha)(X(x))
\, dm(x) =0,
\]
and
\[
\int_{M} dh \wedge i_X\Omega =\int dh(X(x))\,dm(x)=0.\]
Therefore
\begin{align*}
\mathcal H(X) &=\int_{x \in M} \left(\int_{y \in M} L(x,y) \wedge i_X\Omega(y)\right)
\wedge i_X\Omega
\\
&= \int_{(x,y) \in M \times M} L(x,y)(X(x),X(y)) \wedge \Omega(x) \wedge \Omega(y)
\\
&= \int_{(x,y) \in M \times M} L(x,y)(X(x),X(y)) \, d(m \times m)
\\
&= \int \Lambda \, d(m \times m),
\end{align*}
as required.
\end{proof}

Another consequence of Theorem \ref{thm:main-homfull} is the following result for the geodesic
flows discussed in Example \ref{ex:orbifold}, stated as
Theorem \ref{thm:genus-zero-orbifold-intro} in the introduction.

\begin{theorem}\label{thm:genus-zero-orbifold}
Let $X^t : M \to M$
be the geodesic flow over a genus zero hyperbolic orbifold. Then
\[
\mathcal H(X)
=
\lim_{T \to \infty}
\frac{1}{\#\mathcal P_T \ \#\mathcal P_{T+1}}
\sum_{\gamma\in \P_{T},\gamma'\in \mathcal P_{T+1}}
\frac{\mathrm{lk}(\gamma,\gamma')}{\ell(\gamma) \ell(\gamma')}.
\]
\end{theorem}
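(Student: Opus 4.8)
The plan is to obtain Theorem~\ref{thm:genus-zero-orbifold} as an immediate consequence of Theorem~\ref{thm:main-homfull}, applied with the trivial weighting $\varphi \equiv 0$. The point is that Corollary~\ref{cor:rh3s-intro} already expresses $\mathcal H(X)$ as a limit of linking averages weighted by $e^{-\ell(\gamma)-\ell(\gamma')}$, and what remains is to see that for this particular flow the weights may be removed; this works precisely because $\varphi^u$ is (in fact equal to) a constant.

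I would begin by collecting the special features of the geodesic flow $X^t$ over a genus zero hyperbolic orbifold $S = \mathbb H^2/\Gamma$. Since $S$ carries the curvature $-1$ metric, the unstable Jacobian is $\varphi^u \equiv -1$ on all of $M = \PSL_2(\mathbb R)/\Gamma$, as noted after Corollary~\ref{cor:rh3s-intro}. As equilibrium states depend on the potential only up to an additive constant, this gives $\mu_0 = \mu_{\varphi^u}$, and since $X^t$ preserves the Liouville volume $m$ we have $\mu_{\varphi^u} = m$ by the discussion in Section~\ref{section:pressure}. Next, $M$ is a real homology $3$-sphere (Example~\ref{ex:orbifold}), so $b = \dim H_1(M,\mathbb R) = 0$ and $\mathcal P_T(0) = \mathcal P_T$; by the convention fixed just before Theorem~\ref{thm:main-homfull} we then have $\xi(0) = 0$ and $f_{\xi(0)} = 0$. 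Finally, with $\varphi \equiv 0$ every exponential weight in the definition of $\mathscr L_\varphi(T)$ equals $1$, so $\mathscr L_0(T)$ is exactly the average
\[
\frac{1}{\#\mathcal P_T \, \#\mathcal P_{T+1}} \sum_{\gamma\in\mathcal P_T,\, \gamma'\in\mathcal P_{T+1}} \frac{\mathrm{lk}(\gamma,\gamma')}{\ell(\gamma)\ell(\gamma')}
\]
appearing in the theorem.

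With these identifications in hand, Theorem~\ref{thm:main-homfull} with $\varphi \equiv 0$ gives $\lim_{T\to\infty}\mathscr L_0(T) = \int \Lambda \, d(\mu_0 \times \mu_0) = \int \Lambda \, d(m\times m)$, and the equality $\int \Lambda \, d(m\times m) = \mathcal H(X)$ is precisely what was verified in the second half of the proof of Theorem~\ref{thm:hel-homfull}, using Proposition~\ref{prop:linking_form} together with $\Phi_m = 0$. This completes the argument. There is no serious obstacle here; the only thing to be careful about is the identification $\mu_0 = \mu_{\varphi^u} = m$ in the orbifold setting — but $M$ is a genuine smooth $3$-manifold, $X^t$ is a smooth Anosov flow on it, and $\varphi^u$ is literally the constant function $-1$, so the constant-curvature computation of the SRB/equilibrium state is valid globally. (If one preferred to avoid citing Theorem~\ref{thm:main-homfull}, one could instead re-run its proof from Section~\ref{section:proof_of_main} with unweighted orbital sums, replacing the weighted equidistribution statement Theorem~\ref{weighted-equi} by the classical theorem of Bowen~\cite{bowen-equi} on equidistribution towards the measure of maximal entropy $\mu_0$; since $\mu_0 = m$, the same near-diagonal estimates yield the conclusion.)
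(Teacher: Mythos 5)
Your proposal is correct and follows essentially the same route as the paper: apply Theorem \ref{thm:main-homfull} with $\varphi\equiv 0$, use that $M$ is a real homology $3$-sphere so $\mathcal P_T(0)=\mathcal P_T$ and $f_{\xi(0)}=0$, identify the measure of maximal entropy $\mu_0$ with the volume $m$ (via $\varphi^u\equiv -1$), and invoke the computation $\int\Lambda\,d(m\times m)=\mathcal H(X)$ from the proof of Theorem \ref{thm:hel-homfull}. Your write-up in fact supplies slightly more detail than the paper's own two-line proof.
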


\begin{proof}[Proof of Theorem \ref{thm:genus-zero-orbifold}]
Applying Theorem \ref{thm:main-homfull} with $\varphi =0$, we get that the limit
converges to $\int \Lambda \, d(\mu_0 \times \mu_0)$. However, for the geodesic flows
considered here, the measure of maximal entropy is equal to the volume $m$. Hence the limit is
$\int \Lambda \, d(m \times m) = \mathcal H(X)$, as shown in the proof of Theorem
\ref{thm:hel-homfull}.
\end{proof}

\begin{remark}
For comparison we state a result of Contreras \cite{contreras} which motivated this work.
Contreras studied the asymptotic linking of periodic orbits (without weightings)
for hyperbolic flows on basic sets 
of Axiom A flows on $S^3$.
(We note that $S^3$ does not support Anosov flows.) 
The result of Contreras is
that the average linking number of periodic orbits for $X^t$ restricted to a non-trivial
basic set satisfies 
$$
\lim_{T\rightarrow \infty}\mathscr L_0(T)=\int \Lambda \, d(\mu_0\times \mu_0).
$$

In this setting, there is an explicit formula for $\Lambda(x,y)$, 
$$
\Lambda(x,y)=\frac{3}{4\pi}\frac{X(x)\times X(y)}{\|x-y\|^3}\cdot (x-y),
$$ 
which resembles the integrand of Gauss' linking integral. The above result is proved by comparing this integrand to the distance $\|x-y\|$, and using this comparison to show it is integrable with respect to the orbital measures. The equidistribution of these orbits can then be exploited to complete the proof. We will follow a similar approach to prove Theorem \ref{thm:main-homfull}. 
\end{remark}

\section{Proof of Theorem \ref{thm:main-homfull}}\label{section:proof_of_main}

\subsection{Bounds on the Kotschick--Vogel linking form}
For the background to this subsection, see Sections 27 and 28 of \cite{derham}. 
Denote the Riemannian distance on $M$ by $r(x,y)$. 
We will be interested in estimates on $L(x,y)$ as this distance tends to zero.
A $(1,1)$-form is said to be $\O(r^k)$ if its coefficients are. Since the Green kernel $g(x,y)$ (and thus $L(x,y)$) is smooth away from $\Delta(M)$, we will mainly be concerned with the behaviour of the linking form near $\Delta(M).$

From \cite{derham} (page 133), 
\begin{equation}\label{eqn:kernelparametrix}
g(x,y)=\omega(x,y)+\O(r),
\end{equation}
where $\omega(x,y)$ is the \textit{parametrix}, which after setting
$A(x,y)=-\frac{1}{2}r(x,y)^2$, is
defined by 
$$
\omega(x,y) =\frac{1}{s_3}\sum_{ij}\frac{1}{r}\frac{\partial^2A}{\partial x^i\partial y^j}dx^idy^j,
$$
where $s_3$ is the volume of the 3-dimensional unit sphere. Note that in the above we have omitted evaluating functions at $(x,y)$. We will continue in this way to avoid cumbersome notation where possible. 

We now state some properties of the geodesic distance that are useful when working with the parametrix. Let $\xi\in T_xM$ denote the tangent at $x$ to the geodesic from $x$ to $y$, such that $\exp_x(\xi)=y.$ Let $-\eta\in T_yM$ be the tangent at $y$ satisfying $\exp_y(-\eta)=x$.
The functions $\xi(x,y)$, $\eta(x,y)$, $A(x,y)$, as well as the local co-ordinate functions $x^i, y^i$, and their relations are studied in \cite{derham} (page 115). We summarise the results in the following lemma.

\begin{lemma}\label{lemma:dist_properties} The functions above satisfy the following, for all pairs $i,j\in \{1,2,3\}.$
\begin{enumerate}
    \item[(i)] $\xi^i=\O(r)$, $\eta^i=\O(r)$
    \item[(ii)] $y^i-x^i=\O(r)$, $y^i-x^i-\xi^i=\O(r^2)$, \text{ and } $x^i-y^i-\eta^i=\O(r^2).$
    \item[(iii)] $\displaystyle\frac{\partial\xi^i}{\partial x^j}=-\delta_j^i+\O(r)$, and $\displaystyle\frac{\partial\xi^i}{\partial y^j}=\delta_j^i+\O(r),$
    \item[(iv)] $\displaystyle\frac{\partial A}{\partial x^i}=\xi_i$, and $\displaystyle\frac{\partial A}{\partial y^i}=\eta_i.$
\end{enumerate}
Here $\delta^i_j$ is the Kronecker delta symbol.
\end{lemma}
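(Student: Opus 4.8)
The plan is to derive all four statements from the Taylor expansion of the Riemannian exponential map in local coordinates, together with the classical formula for the gradient of the squared distance function, using the compactness of $M$ to make every error term uniform. First I would fix a finite atlas of coordinate charts and a uniform injectivity (indeed convexity) radius $r_0>0$, so that whenever $r(x,y)<r_0$ the points $x,y$ lie in a common chart together with the minimising geodesic joining them. On each chart the metric coefficients $\g$ and their first derivatives are bounded above and below, so the coordinate–component norm of a tangent vector or covector is uniformly comparable to its Riemannian norm; hence a ``$\O(r^k)$'' statement about components is equivalent to the corresponding statement about Riemannian norms, and all the $\O(\cdot)$ below are understood to be uniform for $r(x,y)<r_0$.

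Statement (i) is then immediate, since $\xi=\exp_x^{-1}(y)$ and $\eta=\exp_y^{-1}(x)$ satisfy $\|\xi\|_{g_x}=\|\eta\|_{g_y}=r(x,y)=r$, so $\xi^i,\eta^i=\O(r)$. For (ii) I would Taylor-expand the exponential map in coordinates in the form $\exp_x(v)^i = x^i + v^i + \O(\|v\|^2)$, uniformly over the chart, the quadratic term being the usual Christoffel-symbol expression. Applying this with $v=\xi$ and $y=\exp_x(\xi)$ gives $y^i-x^i-\xi^i=\O(r^2)$, and hence $y^i-x^i=\O(r)$ by (i); applying it with the roles of $x$ and $y$ interchanged and $x=\exp_y(\eta)$ gives $x^i-y^i-\eta^i=\O(r^2)$. (Here one must keep the sign convention for $\eta$ consistent: $\eta=\exp_y^{-1}(x)$ is the vector ``pointing from $y$ to $x$'', so $-\eta$ is the forward tangent at $y$ of the geodesic from $x$ to $y$, matching the displayed identity.)

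For (iii) the point is to upgrade the pointwise $\O(r^2)$ bound of (ii) to an $\O(r)$ bound on first derivatives. Writing $\xi^i(x,y)=(y^i-x^i)+\phi^i(x,y)$, the function $\phi^i$ is smooth off $\Delta(M)$ and extends smoothly across it for $r<r_0$ (it is the difference of $\exp$ and its linearisation), and it vanishes to second order along the diagonal. A Hadamard-type factorisation in the difference variables then gives $\phi^i(x,y)=\sum_{k,l}(y^k-x^k)(y^l-x^l)\,c^i_{kl}(x,y)$ with $c^i_{kl}$ smooth, hence locally bounded, so $\partial\phi^i/\partial x^j$ and $\partial\phi^i/\partial y^j$ are $\O(r)$; since $\partial(y^i-x^i)/\partial x^j=-\delta^i_j$ and $\partial(y^i-x^i)/\partial y^j=\delta^i_j$, we obtain (iii). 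Finally, (iv) is the classical identity for the gradient of the squared distance: for fixed $y$ and $x$ off the cut locus one has $\mathrm{grad}_x\bigl(\tfrac{1}{2}r(x,y)^2\bigr)=-\exp_x^{-1}(y)=-\xi$, by the first variation formula together with Gauss's lemma, so $\mathrm{grad}_x A=\xi$; since the numbers $\partial A/\partial x^i$ are exactly the components of the covector $(\mathrm{grad}_x A)^\flat$, i.e. the lowered-index components $\xi_i=(\g_x\xi)^i$, we get $\partial A/\partial x^i=\xi_i$, and symmetrically $\partial A/\partial y^i=\eta_i$.

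The main obstacle is the rigorous, uniform passage in (iii) from a pointwise $\O(r^2)$ estimate to an $\O(r)$ estimate on the first derivatives: this forces one to record that the error term in (ii) is not merely bounded but is the restriction to $M\times M$ of a globally smooth function vanishing to second order on the diagonal, so that the Hadamard factorisation applies and lowers the order by one. A secondary, purely bookkeeping, difficulty is matching de Rham's tersely stated normalisations of $\xi$, $\eta$ and $A=-\tfrac{1}{2}r^2$ so that the signs in (ii) and (iv) are mutually consistent. Everything else reduces to the standard local theory of the exponential map on the compact manifold $M$, for which one may simply cite \cite{derham} (page 115).
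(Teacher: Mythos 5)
Your proof is correct. Note, though, that the paper does not actually prove this lemma: it simply records these identities as a summary of the computations on page 115 of de Rham's book, where they arise in the construction of the parametrix. What you have supplied is a self-contained derivation from the standard local theory of the exponential map: (i) from $\|\xi\|_{g_x}=\|\eta\|_{g_y}=r$ plus uniform comparability of coordinate and Riemannian norms on a finite atlas; (ii) from the expansion $\exp_x(v)^i=x^i+v^i+\O(\|v\|^2)$; (iii) by writing $\xi^i=(y^i-x^i)+\phi^i$ with $\phi^i$ smooth near the diagonal and vanishing there to second order, then applying the Hadamard/Taylor factorisation $\phi^i=\sum_{k,l}(y^k-x^k)(y^l-x^l)c^i_{kl}$ to lower the order by one under differentiation; and (iv) from $\mathrm{grad}_x\bigl(\tfrac12 r^2\bigr)=-\exp_x^{-1}(y)$ (first variation plus Gauss's lemma) together with the sign $A=-\tfrac12 r^2$ and the index-lowering convention $\xi_i=(\g_x\xi)^i$. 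Your sign bookkeeping for $\eta$ (namely $\eta=\exp_y^{-1}(x)$, so that $-\eta$ is the forward tangent at $y$) matches the paper's convention, and the key technical point you isolate — that the $\O(r^2)$ error in (ii) must be the restriction of a smooth function vanishing to second order on the diagonal before one may differentiate it — is exactly what makes (iii) rigorous rather than formal. The only thing your route costs is length; what it buys is independence from de Rham's parametrix formalism, which the paper otherwise takes as a black box.
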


The remainder of this subsection is dedicated to proving the following.

\begin{lemma}\label{lemma:I=O(1/r)}
There exists $K>0$ such that for all $x,y\in M$, 
\[
|\Lambda(x,y)|<\frac{K}{r(x,y)}.
\]
\end{lemma}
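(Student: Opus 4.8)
The plan is to localise near the compact diagonal $\Delta(M)$, use the parametrix approximation \eqref{eqn:kernelparametrix} together with Lemma \ref{lemma:dist_properties}, and then exploit the smoothness of $X$ to extract one extra power of $r$ from the leading singular term of $L$. Since $L(x,y)=*_yd_yg(x,y)$ is smooth off $\Delta(M)$ and $X$ is smooth, $\Lambda$ is continuous, hence bounded, on $\{r(x,y)\ge\delta\}$ for each fixed $\delta>0$; so it is enough to establish $|\Lambda(x,y)|\le K_0/r(x,y)$ on a neighbourhood of $\Delta(M)$ of radius below the injectivity radius, working in finitely many product charts $U\times U$, all $\O$-estimates below being uniform by compactness. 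The global bound then follows, since on $\{r\ge\delta\}$ one has $|\Lambda|\le C\le C\,\mathrm{diam}(M)/r(x,y)$.

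First I would isolate the most singular term. Lowering indices in Lemma \ref{lemma:dist_properties}(iii) gives $\partial\xi_i/\partial y^j=\rho_{ij}(x)+\O(r)$, so
\[
\omega(x,y)=\frac{1}{s_3\,r}\sum_{ij}\rho_{ij}(x)\,dx^i\,dy^j+R(x,y),
\]
where $R$ and $d_yR$ are $\O(1)$ and $\O(1/r)$ respectively (the coefficient of $R$ is $r^{-1}$ times a smooth function vanishing on $\Delta(M)$). Using $r^2=-2A$ and Lemma \ref{lemma:dist_properties}(iv) to get $\partial_{y^k}(1/r)=\eta_k/r^3$, this yields
\[
d_y\omega(x,y)=\frac{1}{s_3\,r^3}\sum_{ijk}\eta_k\,\rho_{ij}(x)\,dx^i\,dy^k\wedge dy^j+\O(1/r).
\]
Combining with the fact, also from \cite{derham} (Sections 27 and 28), that $d_y(g-\omega)=\O(1/r)$, and noting that the pointwise Hodge star preserves the order of a form, the problem reduces to showing that the contraction of $\frac{1}{s_3 r^3}*_y\left(\sum_{ijk}\eta_k\,\rho_{ij}(x)\,dx^i\,dy^k\wedge dy^j\right)$ with $X(x)\otimes X(y)$ is $\O(1/r)$.

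Contracting the $dx^i$-part with $X(x)$ replaces $\sum_i\rho_{ij}(x)X^i(x)$ by the lowered-index components $X_j(x)$, so the quantity to estimate is $\frac{1}{s_3 r^3}(*_y\Theta)(X(y))$ with $\Theta=\big(\sum_k\eta_k\,dy^k\big)\wedge\big(\sum_j X_j(x)\,dy^j\big)$. Working in the chart and identifying tangent spaces with $\mathbb R^3$, the first factor is $\eta^{\flat_y}=\O(r)$ by Lemma \ref{lemma:dist_properties}(i), and the second is $(X(x))^{\flat_y}+\O(r)$ because $\rho$ is smooth; hence $\Theta=\eta^{\flat_y}\wedge(X(x))^{\flat_y}+\O(r^2)$. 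In dimension $3$, $*_y(u^{\flat_y}\wedge v^{\flat_y})=(u\times_y v)^{\flat_y}$, so, up to sign,
\[
(*_y\Theta)(X(y))=\Omega_y\big(\eta,\,X(x),\,X(y)\big)+\O(r^2),
\]
where $\Omega_y$ is the Riemannian volume form at $y$. The decisive point is that, since $X$ is $C^1$, $X(y)=X(x)+\O(r)$ in the chart, and $\Omega_y$ is alternating, so
\[
\Omega_y\big(\eta,X(x),X(y)\big)=\Omega_y\big(\eta,X(x),X(x)\big)+\O(|\eta|\,r)=\O(r^2).
\]
Thus $(*_y\Theta)(X(y))=\O(r^2)$ and $\frac{1}{s_3 r^3}(*_y\Theta)(X(y))=\O(1/r)$; together with the previously discarded $\O(1/r)$ terms this gives $|\Lambda(x,y)|\le K_0/r(x,y)$ near $\Delta(M)$, and hence the claim globally.

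The hard part is this last cancellation. A priori $\Lambda$ inherits the $\O(1/r^2)$ singularity of $d_yg$; the content of the lemma is that the leading singular part of $*_yd_yg$, evaluated on a pair of tangent vectors, equals — up to a scalar and lower-order terms — the mixed triple product of those vectors with the geodesic direction $\eta/r$, and this vanishes to first order along $\Delta(M)$ when both vectors are the values of one smooth vector field. Making this precise is what forces the careful bookkeeping with raised and lowered indices, and with the distinction between the metric at $x$ and at $y$; that is exactly the role of Lemma \ref{lemma:dist_properties}, while the remaining errors are harmless because they are already $\O(1/r)$.
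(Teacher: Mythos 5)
Your proposal is correct and follows essentially the same route as the paper: reduce to the parametrix via \eqref{eqn:kernelparametrix}, identify the leading $O(1/r^2)$ singularity of $*_yd_y\omega$ as (up to $O(1/r)$ errors) the scalar triple product $r^{-3}\,\Omega_y(\eta,X(x),X(y))$, and gain the two needed powers of $r$ from $\eta=\O(r)$ together with $X(y)=X(x)+\O(r)$ and antisymmetry. The paper organises the same computation through the identity $*d(Z^\flat)=(\curl Z)^\flat$ rather than by isolating the leading term of the parametrix first, but the decisive cancellation is identical.
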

By (\ref{eqn:kernelparametrix}) it suffices to show that $*_yd_y\o(x,y)(X(x),X(y))=\O(1/r)$. To understand the coefficients of the (1,1)-form $*_yd_y\o(x,y),$ we will use the following geometric identity. Given a vector field $Z$ on $M$,
\begin{equation}\label{eqn:curlidentity}
*d(Z^\flat)=(\curl Z)^\flat.
\end{equation}
With this identity in mind, we are ready to consider bounding $*_yd_y\o(x,y).$
First, taking $x,y$ close enough, we can assume they are in the same co-ordinate chart, 
meaning that they also have the same metric components. From now on, we will write 
$\rho_{ij}$ instead of $\rho_{ij}(x)=\rho_{ij}(y)$, and $\g$ instead of $\g_x$, for notational ease. 
All sums in what follows are taken over indices ranging from 1 to 3, unless otherwise specified.

We wish to apply (\ref{eqn:curlidentity}) to the $y$-part of the (1,1)-form $\o(x,y).$ Given $x\in M$, let 
$$
\alpha_j^{i,x}(y)=\frac{1}{r(x,y)}\frac{\partial \xi_i}{\partial y^j}(y),\text{ and }\alpha^{i,x}(y)=\sum_j\alpha_j^{i,x}(y) \, dy^j.
$$
Then  $\o(x,y)=\frac{1}{s_3}\sum_i\alpha^{i,x}(y) \, dx^i$, and 
\begin{align*}
    s_3*_yd_y\o(x,y)&=\sum_i(*d\alpha^{i,x})(y)dx^i=\sum_i(\curl (\alpha^{i,x})^\sharp)^\flat dx^i\\
    &=\sum_i\left(\sum_l(-1)^{l+1}\left(\frac{\partial \alpha^{i,x}_j}{\partial y^k}-\frac{\partial \alpha^{i,x}_k}{\partial y^j}\right)\frac{\partial}{\partial y^l}\right)^\flat dx^i\\
    &=\sum_i\left(\sum_l(-1)^{l+1}\left(\frac{\partial \xi_i}{\partial y^j}\frac{\partial r^{-1}}{\partial y^k}-\frac{\partial \xi_i}{\partial y^j}\frac{\partial r^{-1}}{\partial y^j}\right)\frac{\partial}{\partial y^l}\right)^\flat dx^i\\
    &=\sum_{il\lambda}\rho_{l\lambda}(-1)^{l+1}\left(\frac{\partial \xi_i}{\partial y^j}\frac{\partial r^{-1}}{\partial y^k}-\frac{\partial \xi_i}{\partial y^j}\frac{\partial r^{-1}}{\partial y^j}\right) dx^i dy^\l\\
    &=\sum_{il\lambda}\rho_{l\lambda}(\nabla_yr^{-1}\times \nabla_y\xi_i)^ldx^i dy^\l,
\end{align*}
where $k=k(l)<j(l)=j$ are such that $\{k(l),j(l)\}=\{1,2,3\}\setminus\{l\}.$ Using the component formulae for the musical isomorphism $\flat$ and standard rules for the cross and dot product in $\R^3$, we obtain 
\begin{align*}
    s_3*_yd_y\o(x,y)(X(x),X(y))&=\sum_{il\lambda}\rho_{l\lambda}(\nabla_yr^{-1}\times \nabla_y\xi_i)^lX(x)^iX(y)^\l\\
    &=\sum_{il}X(y)_l(\nabla_yr^{-1}\times \nabla_y\xi_i)^lX(x)^i\\
    &=\sum_{ilp}X(y)_l\rho_{ip}(\nabla_yr^{-1}\times \nabla_y\xi^p)^lX(x)^i\\
    &=\sum_{lp}X(y)_l(\nabla_yr^{-1}\times X(x)_p\nabla_y\xi^p)^l\\
    &=(X(y)^\flat)^T\cdot \left(\nabla_yr^{-1}\times \sum_pX(x)_p\nabla_y\xi^p\right)\\
    &=-\nabla_yr^{-1}\cdot \left(\g X(y)\times \sum_pX(x)_p\nabla_y\xi^p\right).
\end{align*}
Applying Lemma \ref{lemma:dist_properties}, we have
\begin{align*}
    \nabla_yr^{-1}&=-\frac{1}{r^2}\nabla_yr=-\frac{1}{r^3}r\nabla_yr=-\frac{1}{2r^3}\nabla_yr^2\\
    &=\frac{1}{r^3}\nabla_yA=\frac{1}{r^3}(\eta^\flat)^T=\frac{1}{r^3}\g\eta.
\end{align*}
Recalling that $\frac{\partial\xi^p}{\partial y^j}=\delta^p_j+\O(r),$ we have
$$
\sum_pX(x)_p\nabla_y\xi^p=(X(x)^\flat)^T+\O(r)=\g X(x)+\O(r).
$$
By Taylor's theorem applied to $X$, for $x$ and $y$ sufficiently close, this gives
$$
\sum_pX(x)_p\nabla_y\xi^p=\g X(y)+\O(r).
$$

Collecting terms from above, we obtain
$$
*_yd_y\o(x,y)(X(x),X(y))=-\frac{1}{r^3}\g\eta\cdot (\g X(y)\times \O(r)).
$$
Since $\eta=\O(r)$ and $\g$ and $X$ are bounded, Lemma \ref{lemma:I=O(1/r)} is proved.

\subsection{Proof of the main result}
We are now ready to prove Theorem \ref{thm:main-homfull}. We will follow the method in \cite{contreras}, aided by Lemma \ref{lemma:I=O(1/r)}, Theorem \ref{weighted-nonnull-equi}
and Theorem \ref{weighted-equi}. 

We define measures
\[
\mu_{\varphi,T} = \frac{\displaystyle \sum_{\gamma \in \mathcal P_{T}} e^{\int_\gamma \varphi} \mu_\gamma}{
\displaystyle \sum_{\gamma \in \mathcal P_{T}}
e^{\int_\gamma \varphi}},
\hspace{24pt}
\mu_{\varphi,T}^0 = \frac{\displaystyle \sum_{\gamma \in \mathcal P_T(0)}
e^{\int_\gamma \varphi} \mu_\gamma}{
\displaystyle \sum_{\gamma \in \mathcal P_T(0)} e^{\int_\gamma \varphi}}.
\]
By Theorem \ref{weighted-nonnull-equi}, $\mu_{\varphi,T+1}$ converges to the equilibrium 
state $\mu_\varphi$. If $M$ is a real homology $3$-sphere then $\mu_{\varphi,T}^0$ also converges to $\mu_\varphi$. On the other hand, if $H_1(M,\mathbb R)$ has dimension at least one then,
by Theorem \ref{weighted-equi}, $\mu_{\varphi,T}^0$ converges to the equilibrium state
$\mu_{\varphi+f_{\xi(\varphi)}}$.
To simplify notation, we shall write
\[
\varphi^* = \begin{cases}
\varphi \qquad \quad \ \mbox{ if } \dim H_1(M,\mathbb R)=0
\\
\varphi + f_{\xi(\varphi)}  \mbox{ if } \dim H_1(M,\mathbb R) \ge 1,
\end{cases}
\]
so that the limit of $\mu_{\varphi,T}^0$ is denoted 
$\mu_{\varphi^*}$ in all cases.
By definition
$$
\mathscr{L}_\varphi(T)=\int\Lambda \, d(\mu_{\varphi,T}^0\times \mu_{\varphi,T+1}).
$$ 
Therefore, if 
either of the following limits exist, then we have the equality
\begin{equation}\label{eq:equal_limits} 
\lim_{T \to \infty} \mathscr L_\varphi(T)
=\lim_{T\rightarrow \infty}\int\Lambda \, d(\mu_{\varphi,T}^0\times\mu_{\varphi,T+1}).
\end{equation}

We have that $\mu_{\varphi,T}^0$ converges to $\mu_{\varphi^*}$ and $\mu_{\varphi,T+1}$ converges to $\mu_\varphi$. We will use this to prove that integral in (\ref{eq:equal_limits}) converges to the integral over 
$\mu_{\varphi^*} \times \mu_{\varphi}$. First we must prove 
$\int \Lambda \, d(\mu_{\varphi^*}\times \mu_{\varphi})$ exists.

We will use the following lemma from \cite{contreras}.

\begin{lemma}[\cite{contreras}, Lemma 2.4]\label{lemma:O(1/r)_integrable}
Let $(Y,d)$ be a separable metric space with Borel probability measures $\mu$
and $\nu$.
\begin{enumerate}
    \item[(i)] 
    If $x\in Y$ is such that $\liminf_{\rho\rightarrow 0}\frac{\log \mu(B(x,\rho))}{\log \rho}>1$, then $\int \frac{1}{r(x,a)}\, d\mu(a)$ exists.
    \item[(ii)] 
    If this limit is uniformly greater than $1$ $\nu$-a.e, then $\int \frac{1}{r(x,a)}\, d(\mu(x)\times\nu(a))$ exists.
\end{enumerate}
\end{lemma}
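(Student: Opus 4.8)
The plan is to derive both parts from the elementary ``layer-cake'' identity for non-negative integrands. Fix $x \in Y$ and note that $a \mapsto 1/r(x,a)$ is non-negative and Borel measurable on $Y$ (it is continuous on $Y \setminus \{x\}$, and assigning it the value $+\infty$ at $a = x$ changes nothing), so
\[
\int_Y \frac{1}{r(x,a)} \, d\mu(a) = \int_0^\infty \mu\big(\{a \in Y : r(x,a) < 1/t\}\big) \, dt = \int_0^\infty \mu\big(B(x,1/t)\big) \, dt,
\]
where in the last step we use that $\mu$ of the open and closed $1/t$-ball coincide for all but countably many $t$ (the monotone function $\rho \mapsto \mu(\overline{B}(x,\rho))$ has at most countably many jumps).

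For part (i), I would first restate the hypothesis $\liminf_{\rho \to 0} \log\mu(B(x,\rho))/\log\rho > 1$ as: there exist $\delta > 0$ and $\rho_0 \in (0,1)$ with $\mu(B(x,\rho)) \le \rho^{1+\delta}$ for all $\rho \le \rho_0$ --- the inequality direction flips because $\log\rho < 0$. Splitting the integral above at $t = 1/\rho_0$, the part over $(0, 1/\rho_0]$ contributes at most $1/\rho_0$ (as $\mu$ is a probability measure), while over $(1/\rho_0, \infty)$ we have $\mu(B(x,1/t)) \le t^{-(1+\delta)}$, which integrates to $\rho_0^{\delta}/\delta$. Hence $\int_Y 1/r(x,a)\, d\mu(a) \le 1/\rho_0 + \rho_0^{\delta}/\delta < \infty$.

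For part (ii), I would read ``this limit is uniformly greater than $1$ $\nu$-a.e.'' as the assertion that a \emph{single} pair $(\delta, \rho_0)$ works off a $\nu$-null set: $\mu(B(a,\rho)) \le \rho^{1+\delta}$ for all $\rho \le \rho_0$ and for $\nu$-a.e.\ $a$. Applying the estimate from part (i) with $a$ as the base point (and using $r(x,a) = r(a,x)$), we get $\int_Y 1/r(x,a)\, d\mu(x) \le 1/\rho_0 + \rho_0^{\delta}/\delta =: C$ for $\nu$-a.e.\ $a$. Since $(x,a) \mapsto 1/r(x,a)$ is non-negative and jointly Borel on $Y \times Y$ (separability of $Y$ guarantees the product $\sigma$-algebra is the Borel $\sigma$-algebra, so Tonelli applies), we conclude
\[
\int \frac{1}{r(x,a)} \, d(\mu(x) \times \nu(a)) = \int_Y \Big( \int_Y \frac{1}{r(x,a)}\, d\mu(x) \Big) \, d\nu(a) \le C < \infty.
\]

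The argument is entirely routine measure theory; the only place where a little care is required is in interpreting the hypotheses precisely --- converting the $\liminf$ condition into the clean power-law bound on ball measures, and, in part (ii), checking that the qualifier ``uniformly'' genuinely supplies one pair $(\delta, \rho_0)$ valid $\nu$-a.e.\ (without this uniformity one would only learn that $\int_Y 1/r(x,a)\, d\mu(x)$ is finite for $\nu$-a.e.\ $a$, which is insufficient to integrate in $a$).
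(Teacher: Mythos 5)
The paper does not prove this lemma at all --- it is quoted from Contreras's paper (his Lemma 2.4) and used as a black box --- so there is nothing internal to compare against. Your proof is correct and supplies the missing argument: the layer-cake identity plus the conversion of the $\liminf$ hypothesis into the power-law bound $\mu(B(x,\rho))\le\rho^{1+\delta}$ for $\rho\le\rho_0$ is exactly the standard route (Contreras's own proof is the equivalent dyadic-annuli decomposition, which the present paper reuses later in Section 9 when it splits $B(x,R)$ into the annuli $A_n(x)$). Your reading of the somewhat loosely worded hypothesis in (ii) --- that ``uniformly'' must furnish a single pair $(\delta,\rho_0)$ valid $\nu$-a.e., so that the inner integral $\int 1/r(x,a)\,d\mu(x)$ is bounded by a constant independent of $a$ before integrating in $\nu$ --- is the correct one, and is consistent with how the lemma is applied in the paper (Lemma \ref{lemma:I_integrable} establishes the bound uniformly in $x$ over all of $M$). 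The remark about open versus closed balls is superfluous, since $\{a: 1/r(x,a)>t\}$ is exactly the open ball $B(x,1/t)$, but it does no harm.
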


By Lemmas \ref{lemma:I=O(1/r)} and \ref{lemma:O(1/r)_integrable}, to prove 
$\int \Lambda \, d(\mu_{\varphi^*} \times \mu_{\varphi})$ exists we need only show 
$$
\liminf_{\rho\rightarrow 0}\frac{\log \mu_{\varphi^*}(B(x,\rho))}{\log \rho}>1
$$ 
uniformly $\mu_{\varphi}$-almost everywhere. To do this, we will use a bound on the measure $\mu_{\varphi^*}$ which resembles a Gibbs property. We will need the following definition.

A function $\psi : M \to \mathbb R$ satisfies the {\it Bowen property} if there exist $
C,\delta >0$ such that for all $L>0$, whenever $y\in B(x,\delta,L),$  
$$
\bigg|\int_0^L \psi(X^t(x)) \, dt-\int_0^L \psi(X^t(y)) \, dt\bigg|<C.
$$
This holds for H\"older continuous functions.

\begin{lemma}[Franco \cite{franco}]\label{lemma:Franco} 
Suppose $\psi$ satisfies the Bowen property, and $\delta>0$ is small. Then, there exists $C_\delta>0$ such that for any $L>0$, $x\in M$,
\[
\mu_{\psi}(B(x,\delta,L))  \le C_\delta\exp{\left(\int_0^L\psi(X^t(x))\, dt-P(\varphi)L\right)}.
\]
\end{lemma}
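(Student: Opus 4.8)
The plan is to deduce this from the classical Gibbs property of equilibrium states for subshifts of finite type, using the symbolic model of the flow already employed in the proof of Lemma~\ref{lem:coho-to-neg}; this is essentially Bowen's argument, as adapted to flows by Franco. Replacing $\psi$ by $\psi-P(\psi)$ changes neither side of the asserted inequality (the equilibrium state $\mu_\psi$ and the exponent $\int_0^L\psi(X^tx)\,dt-P(\psi)L$ are both unaffected), so we may assume $P(\psi)=0$, and the claim becomes $\mu_\psi(B(x,\delta,L)) \le C_\delta \exp\left(\int_0^L \psi(X^t x)\,dt\right)$. Model $X^t$ by a suspended flow $\sigma^t : \Sigma^r \to \Sigma^r$ over a mixing subshift of finite type $\sigma : \Sigma \to \Sigma$, with strictly positive H\"older roof $r$ and H\"older semiconjugacy $\pi : \Sigma^r \to M$. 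Set $\widehat m := (\widehat\mu \times \mathrm{Leb})/\int r\, d\widehat\mu$, where $\widehat\mu$ is the equilibrium state on $\Sigma$ for $q_\psi(x) = \int_0^{r(x)} \psi(\pi(x,\tau))\,d\tau$, which satisfies $P_\sigma(q_\psi)=0$; then $\mu_\psi=\pi_*\widehat m$ and $\pi$ is injective off a $\widehat m$-null set. Since $\psi$ has the Bowen property and $r$ is H\"older, $q_\psi$ has the Bowen property for $\sigma$, so $\widehat\mu$ is a Gibbs measure: there is $C_1 \ge 1$ with
\[
\widehat\mu([x_0,\dots,x_{n-1}]) \le C_1 \exp\left(\sum_{j=0}^{n-1} q_\psi(\sigma^j x)\right)
\]
for every cylinder and every $x$ in it.

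The key step is a geometric comparison between flow Bowen balls and symbolic cylinders. Because the Anosov flow is expansive and $\pi$ is close to injective, for $\delta$ sufficiently small there is $N \in \mathbb N$, independent of $x$ and $L$, so that $\pi^{-1}(B(x,\delta,L))$ is contained in a union of at most $N$ sets of the form $[w] \times I$, where $[w]$ is a cylinder of length $n=n(w)$ with $\big|\sum_{j=0}^{n-1} r(\sigma^j z) - L\big| \le R$ for $z \in [w]$ (here $R$ depends only on $r$ and $\delta$) and $I \subset \mathbb R$ is an interval of length at most $R$. This is the standard, if slightly delicate, feature of Markov codings of hyperbolic flows: a $\delta$-shadowed orbit segment of length $L$ determines its sequence of crossings of a Markov section of small diameter, up to a bounded ambiguity at the two ends. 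Writing $z_w$ for a chosen point of $[w]$, $x_w := \pi(z_w,0)$ and $\ell_w := \sum_{j=0}^{n(w)-1} r(\sigma^j z_w)$, and using $\sum_{j=0}^{n(w)-1} q_\psi(\sigma^j z_w) = \int_0^{\ell_w} \psi(X^t x_w)\,dt$, we obtain
\[
\mu_\psi(B(x,\delta,L)) \le \frac{N C_1 R}{\int r \, d\widehat\mu}\, \max_{w} \exp\left(\int_0^{\ell_w} \psi(X^t x_w)\,dt\right).
\]

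It remains to compare $\int_0^{\ell_w}\psi(X^t x_w)\,dt$ with $\int_0^L \psi(X^t x)\,dt$. Since $x_w$ lies in $B(x,\delta,\min(\ell_w,L))$ up to a time shift bounded by $R$, and $|\ell_w-L|\le R$, the Bowen property of $\psi$ together with the bound $\|\psi\|_\infty R$ for the endpoint discrepancies shows that this difference is at most a constant $C_2$ depending only on $\psi$ and $\delta$. Hence $\mu_\psi(B(x,\delta,L)) \le C_\delta \exp\left(\int_0^L \psi(X^t x)\,dt\right)$ with $C_\delta = N C_1 R e^{C_2}/\int r\,d\widehat\mu$, as required. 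The main obstacle is the middle paragraph: making precise that a flow Bowen ball of temporal length $L$ pulls back into boundedly many cylinder-times-interval boxes whose symbolic lengths track $L$ to within a bounded additive error, while the roof function is only H\"older; once this is in place the rest is bookkeeping, with all $O(1)$ errors absorbed into $C_\delta$. Alternatively, one can run Bowen's argument directly on $M$, using expansiveness, the separated-set definition of pressure, and the uniqueness of the equilibrium state to bound $\mu_\psi(B(x,\delta,L))$ by the weight $e^{\int_0^L\psi(X^tx)\,dt}$ of a single orbit segment in the partition function; this is closer to Franco's treatment but requires the same care at the endpoints.
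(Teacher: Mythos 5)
Your argument is essentially correct, but it takes a genuinely different route from the paper's. You reduce to the symbolic model and invoke the Gibbs property of the equilibrium state of $q_\psi$ on the base shift, with the real work concentrated in the (standard but delicate, and here unproved) covering of $\pi^{-1}(B(x,\delta,L))$ by boundedly many cylinder--interval boxes whose symbolic roof sums track $L$ to within a bounded error. The paper instead follows Franco's argument directly on $M$: it first bounds the \emph{orbital} measures $\mu_{\psi,T}(B(x,\delta,L))$ by constructing, inside each long periodic orbit, a $(T,2\delta)$-separated set of points of $B(x,\delta,L)$ carrying a definite fraction of that orbit's mass (via expansivity and a pigeonhole distribution of orbit segments into boundedly many collections), pushes this set forward by $X^L$ to get a $(T-L,2\delta)$-separated set, applies the Bowen property to swap $\int_0^L\psi(X^ty)\,dt$ for $\int_0^L\psi(X^tx)\,dt$, and controls the resulting partition sums by the separated-set characterisation of pressure; it then passes to $\mu_\psi$ via Theorem \ref{weighted-nonnull-equi} and the Portmanteau inequality for the open set $B(x,\delta,L)$. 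Two practical differences are worth noting. First, your route trades Franco's combinatorial orbit-segment argument for the coding lemma; neither is free, but the paper's version needs no Markov sections beyond what is already quoted. Second, and more importantly for this paper, the paper's proof yields the bound \emph{uniformly for the approximating orbital measures} $\mu_{\psi,T}$ with $T>L$, and this stronger statement is exactly what is invoked later as (\ref{eqn:gibbs-for-orbital}) to control $\mu_{\varphi,T+1}(B(x,\delta,L(\rho)))$ for finite $T$; your argument bounds only the limiting measure $\mu_\psi$, so as written it proves the lemma as stated but would not support that downstream use. (Minor point in your favour: your normalisation $P(\psi)=0$ silently corrects what appears to be a typo $P(\varphi)$ for $P(\psi)$ in the statement.)
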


\begin{proof}
The method we follow is based on that of Franco (\cite{franco}, Proposition 2.11). We first consider bounding the orbital measures $\mu_{\psi, T}(B(x,\delta,L))$, for $T>L$. To obtain bounds involving pressure, we first construct a large separated set of periodic points contained in $B(x,\delta,L)$.

Let $|\mathcal P_{T}|$ denote the set of points on the orbits in $\mathcal P_{T}$. By expansivity, there exists a constant $q>0$ such that for $y,y'\in |\mathcal P_{T}|$, $y'\not\in X^{[-q,q]}y$ implies $y$ and $y'$ are $(T,2\delta)$-separated. Let $\delta'=\min\{q,\delta\}$ and choose an integer $S>2q/\delta'$.  Consider an orbit $\gamma\in \mathcal P_{T}$. We can divide $\gamma$ into consecutive closed segments $I_1,\ldots, I_m$, such that each segment has the same orbit length $l$, for some $l\in (\delta'/2,\delta')$. By the definition of $q$, if $|i-j|>S \,(\text{mod } m),$ we have $I_i\cap X^{[-q,q]}I_j=\varnothing.$ We will now distribute the segments into collections $E_1,\ldots, E_{2(S+1)}$ such that if $I_i,I_j\in E_k$ are distinct, then $|i-j|>S \,(\text{mod } m).$ We do this with the following process:
\begin{enumerate}
\item Put $I_1\in E_1$, then add the $\lfloor \tfrac{m}{S+1} \rfloor -1$ other segments $I_{S+2}, I_{2S+3},I_{3S+4},\ldots$
\item Put $I_2\in E_2$, and the $\lfloor \tfrac{m}{S+1} \rfloor -1$ other segments $I_{S+3}, I_{2S+4}, I_{3S+5},\ldots$
\item Repeat this process until collections $E_1,\ldots, E_{S+1}$ are full, at which point at most $S+1$ segments remain.
\item Put each of the remaining segments (if any) into a collection on its own, and leave the remaining collections (if any) empty.
\end{enumerate}
Now, we have that 
$$\mu_{\psi, T}(B(x,\delta,L)\cap\gamma)=\sum_{k=1}^{2(S+1)}\mu_{\psi, T}\left(B(x,\delta,L)\cap \bigcup_{I_i\in E_k}I_i\right),$$
 so there exists $k^*$ such that $E_{k^*}$ satisfies
$$\mu_{\psi, T}\left(B(x,\delta,L)\cap \bigcup_{I_i\in E_{k^*}}I_i\right)\geq \frac{1}{2(S+1)}\mu_{\psi, T}(B(x,\delta,L)\cap\gamma).$$
 Form a set $A_\gamma$ by picking one point (wherever possible) from $B(x,\delta,L)\cap I_i$, for each $I_i\in E_{k^*}$. For $y\in A_\gamma\cap I_i$, set $$R_y=\{t\in (-\ell(\gamma),\ell(\gamma)) :X^ty\in B(x,\delta,L)\cap I_i\}\subset [-\delta,\delta].$$ If we let $A=\cup_{\gamma\in \mathcal P_{T+1}}A_\gamma$, then $A$ is $(T,2\delta)$-separated and we have 
$$\mu_{\psi, T}(B(x,\delta,L))\leq 2(S+1)\mu_{\psi, T}\left(\bigcup_{y\in A}X^{R_y}y\right)=2(S+1)\frac{\sum_{y\in A}\lambda(R_y)e^{\int_{\gamma_y}\psi}}{\sum_{\gamma \in \mathcal P_{T+1}}e^{\int_\gamma \psi}},$$ where $\lambda$ is Lebesgue measure on the real line, and $\gamma_y$ refers to the periodic orbit containing $y$. Now, since $A\subset B(x,\delta, L)$, $X^LA$ is $(T-L,2\delta)$-separated and by the Bowen property there is $C>0$ such that for each $y\in A$, 
$$\bigg|\int_0^L \psi(X^t(x)) \, dt-\int_0^L \psi(X^t(y)) \, dt\bigg|<C.$$
Thus $$\frac{\sum_{y\in A}\lambda(R_y)e^{\int_{\gamma_y}\psi}}{\sum_{\gamma \in \mathcal P_{T+1}}e^{\int_\gamma \psi}}\leq 2\delta e^{C+\int_0^L\psi(X^tx)\,dt}\frac{\sum_{y\in X^LA}e^{\int_0^{T-L}\psi(X^ty)\, dt}}{\sum_{\gamma \in \mathcal P_{T+1}}e^{\int_\gamma \psi}}.$$ Combining the above with Lemmas 2.6 and 2.8 in \cite{franco}, we have $C_\delta>0$ such that $$\mu_{\psi, T}(B(x,\delta,L))\leq C_\delta\exp{\left(\int_0^L\psi(X^t(x))\, dt-P(\varphi)L\right)}.$$ By Theorem \ref{weighted-nonnull-equi},  $$\mu_\psi(B(x,\delta,L))\leq \liminf_{T\to\infty}\mu_{\psi, T}(B(x,\delta,L)),$$ which completes the proof.
\end{proof}

\begin{remark}
The results from \cite{franco} that are used in the proof above are only proved for periodic orbits whose least period lies within a small range $(T-\epsilon,T+\epsilon),$ as opposed to our range of $(T,T+1]$. Strictly speaking, one should obtain the result of Theorem \ref{thm:main-homfull} for these smaller ranges and then apply an additive argument to the limit in order to obtain it for the larger range.
\end{remark}

\begin{lemma}\label{lemma:I_integrable} Suppose $\psi$ satisfies the Bowen property. Then 
$$
\liminf_{\rho\rightarrow 0}\frac{\log \mu_{\psi}(B(x,\rho))}{\log \rho}>1
$$ 
uniformly in $x$. 
\end{lemma}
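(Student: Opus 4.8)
The plan is to prove a uniform estimate $\mu_\psi(B(x,\rho))\le C\rho^{s}$ with $s>1$ and with $C,s$ independent of $x$, valid for all $x\in M$ and all small $\rho>0$; taking logarithms and dividing by $\log\rho<0$ then gives $\log\mu_\psi(B(x,\rho))/\log\rho\ge s+\log C/\log\rho$, so letting $\rho\to 0$ yields $\liminf_{\rho\to0}\log\mu_\psi(B(x,\rho))/\log\rho\ge s>1$ uniformly in $x$.

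\medskip
First I would compare metric balls with Bowen balls. Since $X$ is a $C^1$ vector field on the compact manifold $M$, Gr\"onwall's inequality gives $\Lambda>0$ with $r(X^tx,X^ty)\le e^{\Lambda t}r(x,y)$ for all $x,y\in M$ and $t\ge0$. Fix a small $\delta>0$. For small $\rho$, put $L=L(\rho):=\Lambda^{-1}\log(\delta/\rho)$; then $B(x,C\rho)\subseteq B(x,C\delta,L)$ for any fixed $C$. By Lemma~\ref{lemma:Franco} together with Lemma~\ref{lem:coho-to-neg} applied to $\psi$ — which furnishes $\epsilon>0$ and continuous $v$ with $\int_0^{T}\psi(X^tx)\,dt-P(\psi)T\le-\epsilon T+2\|v\|_\infty$ for all $x$, $T\ge0$ — we obtain, for every $x$,
\[
\mu_\psi(B(x,\delta,L))\le C_\delta\exp\Big(\int_0^{L}\psi(X^tx)\,dt-P(\psi)L\Big)\le C_\delta' e^{-\epsilon L},
\]
with $C_\delta,C_\delta'$ depending only on $\delta$ and $\psi$. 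Hence $\mu_\psi(B(x,\rho))\le C_\delta' e^{-\epsilon L(\rho)}=C_\delta''\rho^{\epsilon/\Lambda}$, uniformly in $x$. The exponent $\epsilon/\Lambda$ need not exceed $1$, so one more idea is needed.

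\medskip
The extra factor comes from the flow direction. Through each $x$ choose a small codimension-one disc $\Sigma$ transverse to $X$; by compactness such discs may be drawn from a finite family, so the constants below are uniform in $x$. There is a uniform $C_1>0$ with $B(x,\rho)\subseteq X^{[-C_1\rho,C_1\rho]}\big(\Sigma\cap B(x,C_1\rho)\big)$, so, putting $A:=X^{-C_1\rho}\big(\Sigma\cap B(x,C_1\rho)\big)$ (a subset of a transversal), $B(x,\rho)\subseteq X^{[0,2C_1\rho]}A$. Let $m=\lfloor\delta/(2C_1\rho)\rfloor$. By flow-invariance the slices $X^{[2C_1\rho\,j,\,2C_1\rho\,(j+1)]}A$, $0\le j<m$, all have the same $\mu_\psi$-measure, and, since a flow-invariant probability measure gives zero mass to every smooth transversal, their pairwise intersections are $\mu_\psi$-null; hence
\[
m\,\mu_\psi\big(X^{[0,2C_1\rho]}A\big)=\mu_\psi\Big(\bigcup_{j=0}^{m-1}X^{[2C_1\rho\,j,\,2C_1\rho\,(j+1)]}A\Big)\le\mu_\psi\big(X^{[0,\delta]}A\big).
\]
A short computation with the expansion bound shows $X^{[0,\delta]}A\subseteq B(x,C_2\delta,L-\delta)$ for a uniform $C_2$; choosing $\delta$ small enough that Lemma~\ref{lemma:Franco} applies at radius $C_2\delta$ and using the estimate of the previous paragraph, we get, since $m$ is comparable to $\delta/\rho$,
\[
\mu_\psi(B(x,\rho))\le\mu_\psi\big(X^{[0,2C_1\rho]}A\big)\le\frac{C\rho}{\delta}\,\mu_\psi\big(B(x,C_2\delta,L-\delta)\big)\le C_\delta'''\,\rho\,e^{-\epsilon L(\rho)}=C\,\rho^{\,1+\epsilon/\Lambda},
\]
with all constants independent of $x$. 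Thus $s:=1+\epsilon/\Lambda>1$ works.

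\medskip
The main obstacle is the flow-slicing step: one must make the transversal $\Sigma$, the constants $C_1,C_2$, and the near-disjointness of the tiling uniform over $x\in M$ — this is exactly where compactness of $M$ enters — and one must justify that a flow-invariant probability measure assigns zero mass to smooth transversals (via the local disintegration of such a measure over flow boxes), which is what makes $t\mapsto\mu_\psi(X^{[0,t]}A)$ additive and lets the flow direction contribute the decisive ``$+1$'' to the exponent. The comparison of metric and Bowen balls and the invocation of Franco's Gibbs-type estimate are, by contrast, routine.
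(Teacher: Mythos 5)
Your argument reaches the right conclusion and shares its first half with the paper's proof --- both start from Lemma \ref{lem:coho-to-neg} combined with Lemma \ref{lemma:Franco} to get $\mu_\psi(B(x,\delta,L))\le C_\delta' e^{-\epsilon L}$, together with the comparison $B(x,\rho)\subset B(x,\delta/2,L(\rho))$ --- but you obtain the decisive extra factor of $\rho$ from the flow direction by a genuinely different mechanism. The paper works with the orbital measures $\mu_{\psi,T}$ (weighted arc-length on periodic orbits): an orbit crosses $B(x,\rho)$ for time at most $2\rho/k_1$ per visit but spends time at least $2a$ in the flow-thickened set $X^{[-a,a]}B(x,\rho)$, whence $\mu_{\psi,T}(B(x,\rho))\le(\rho/ak_1)\,\mu_{\psi,T}(X^{[-a,a]}B(x,\rho))\le(\rho/ak_1)\,\mu_{\psi,T}(B(x,\delta,L))$ with $a=\delta/2k_2$, and one then passes to the weak$^*$ limit. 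You instead work directly with the limiting invariant measure $\mu_\psi$, tiling the flow box $X^{[0,\delta]}A$ over a transversal by $m\asymp\delta/\rho$ congruent slices and invoking flow-invariance. Your version has the merit of applying to any flow-invariant measure satisfying the Gibbs-type upper bound (it never uses that $\mu_\psi$ is a limit of periodic-orbit measures), while the paper's crossing-time comparison avoids transversals and flow boxes entirely. One point in your write-up needs firming up: the claim that the slices have pairwise $\mu_\psi$-null intersections is justified by your remark (invariant measures charge no smooth transversal) only for \emph{adjacent} slices, whose overlap lies in a single time-slice of $A$; for $|j-j'|\ge 2$ the intersection is a priori an open set, and a point in it would have an orbit recrossing the transversal disc within time at most $\delta$. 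You must rule this out by taking $\delta$ below the uniform minimal return time to the (finite, uniformly transverse, small-diameter) family of discs --- available by compactness and the flow-box theorem. With that supplied, the proof goes through.
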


\begin{proof} We follow the proof of Lemma 2.6 in \cite{contreras}. 
By Lemma \ref{lem:coho-to-neg}, 
there exists $\epsilon>0$ and a H\"older continuous function $v : M \to \mathbb R$ such that
\[
\int_0^L \psi(X^tx)\, dt -P(\psi)L\le -\epsilon L + v(X^Lx)-v(x),
\]
for all $x \in M$ and $L \ge 0$.
Thus, the proof of Lemma \ref{lemma:Franco} 
tells us that  
for $\delta>0$ sufficiently small, 
$$
\mu_{\psi, T}(B(x,\delta,L))\leq C_\delta^\prime e^{-\epsilon L},
$$
where $C_\delta^\prime = C_\delta e^{2\|v\|_\infty}$.

By compactness of $M$, there exist constants $\lambda, k_1,k_2>0$ such that $\|D_xX^t\|\leq \lambda^t$ for $t\geq 0$, and $k_1<\|X\|<k_2$. 
So whenever $\rho\lambda^L\leq \delta/2,$ $B(x,\rho)\subset B(x,\delta/2,L)$, 
and, for any $a>0,$ 
\[
X^{[-a,a]}B(x,\rho)\subset B(x,\delta/2+ak_2,L).
\] 
Now, any orbit intersecting $X^{[-a,a]}B(x,\rho)$ does so for time at least $2a$ and
any orbit intersecting $B(x,\rho)$ does so for time at most $2\rho/k_1.$ 
Setting $a=\delta/2k_2,$ 
$$
\mu_{\psi, T}(B(x,\rho))\leq \frac{2\rho}{2ak_1}
\mu_{\psi, T}(X^{[-a,a]}B(x,\rho))
\leq \frac{\rho}{ak_1}\mu_{\psi, T}(B(x,\delta,L)).
$$ 
Since $B(x,\rho)$ is open and $\mu_{\psi, T}\rightarrow \mu_{\psi}$, we have
\begin{align*}
\mu_{\psi}(B(x,\rho))
&\leq\liminf_{T \rightarrow\infty}\mu_{\psi, T}(B(x,\rho))
\\
&\le \frac{\rho}{ak_1} \liminf_{T \to \infty} \mu_{\psi, T}(B(x,\delta,L))
\leq \frac{C_\delta^\prime \rho}{ak_1}e^{-\epsilon L}.
\end{align*}
Now, if we set $\rho>0$ sufficiently small and consider 
$L=L(\rho):=\frac{\log\delta/2-\log\rho}{\log\lambda},$ then  $\rho\leq \delta/2\lambda^L$, and so 
$$
\frac{\log \mu_{\psi}(B(x,\rho))}{\log \rho}\geq 1+\frac{\epsilon}{\log\lambda}
+O\left(\frac{1}{\log (1/\rho)}\right).
$$ 
Thus we have that 
$$
\liminf_{\rho\rightarrow 0}\frac{\log \mu_{\psi}(B(x,\rho))}{\log \rho}\geq 1+\frac{\epsilon}{\log\lambda}>1,
$$
uniformly in $x$.
\end{proof}

Having shown that our integral exists, we are left to show it is the value of the limit on the right-hand side of
(\ref{eq:equal_limits}). We again examine the behaviour of $\Lambda$ near the diagonal. First we show that the diagonal in $M \times M$ has zero measure with respect to the product of two equilibriums states of H\"older continuous functions.

\begin{lemma}\label{lem:diagonal-zero-measure}
Let $\psi^\prime : M \to \mathbb R$, $\psi: M\to \mathbb R$ be H\"older continuous. Then
\[
(\mu_{\psi^\prime} \times \mu_\psi)(\Delta(M))=0.
\]
\end{lemma}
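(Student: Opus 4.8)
The plan is to apply Tonelli's theorem and reduce the statement to the fact that an equilibrium state of a H\"older continuous function is non-atomic. Since $\Delta(M)$ is closed, hence Borel, in $M\times M$, and $\mathbbm{1}_{\Delta(M)}$ is a nonnegative measurable function, Tonelli gives
\begin{align*}
(\mu_{\psi'} \times \mu_\psi)(\Delta(M))
&= \int_M \left( \int_M \mathbbm{1}_{\Delta(M)}(x,y) \, d\mu_\psi(y) \right) d\mu_{\psi'}(x) \\
&= \int_M \mu_\psi(\{x\}) \, d\mu_{\psi'}(x),
\end{align*}
so it suffices to show that $\mu_\psi(\{x\})=0$ for every $x \in M$.

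To see that $\mu_\psi$ has no atoms, I would argue directly from invariance. Suppose $\mu_\psi(\{p\}) = c > 0$ for some $p \in M$. An Anosov flow has no fixed points, so the $X^t$-orbit of $p$ is either non-periodic, in which case it contains infinitely many distinct points, each of $\mu_\psi$-measure $c$ by invariance of $\mu_\psi$, contradicting $\mu_\psi(M)=1$; or it is periodic with least period $\ell>0$, in which case $\{X^t p : t \in [0,\ell)\}$ is an uncountable set of distinct points each of measure $c$, again a contradiction. (Alternatively, since $\psi$ is H\"older continuous it satisfies the Bowen property, so Lemma \ref{lemma:I_integrable} applies and gives $\mu_\psi(B(x,\rho)) \to 0$ as $\rho \to 0$, uniformly in $x$; letting $\rho \to 0$ then yields $\mu_\psi(\{x\})=0$.)

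Combining these, the inner integrand vanishes identically and the double integral is zero, which proves $(\mu_{\psi'} \times \mu_\psi)(\Delta(M))=0$. I do not expect any serious obstacle here: the only points needing care are the measurability required to invoke Tonelli, which is immediate since $\Delta(M)$ is closed, and, in the direct argument, the case where $p$ lies on a periodic orbit, which is handled by observing that a nontrivial arc of distinct points on that orbit would each carry mass $c$.
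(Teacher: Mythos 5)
Your proof is correct, and it takes a genuinely different and more elementary route than the paper's. You reduce the statement via Tonelli to the non-atomicity of $\mu_\psi$ (the slice of $\Delta(M)$ over $x$ is the singleton $\{x\}$, and $\Delta(M)$ is closed, hence product-measurable since $M$ is a compact metric space), and then get non-atomicity for free from flow-invariance plus the absence of fixed points for Anosov flows: an atom of mass $c>0$ would propagate along its orbit to infinitely many (indeed, in the periodic case, uncountably many) distinct points each of mass $c$, contradicting $\mu_\psi(M)=1$. This uses nothing about equilibrium states beyond invariance and non-atomicity, and in fact only one of the two factors needs to be non-atomic. The paper instead covers $\Delta(M)$ by products $B(x,\delta,T)\times B(x,\delta,T)$ over a $(T,\delta)$-spanning set, applies the Gibbs-type upper bound of Lemma \ref{lemma:Franco} to each factor, uses Lemma \ref{lem:coho-to-neg} to extract an extra factor $e^{-\epsilon T}$, and lets $T\to\infty$ via the topological (spanning-set) definition of pressure. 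The paper's argument is heavier but yields a quantitative exponential rate at which shrinking Bowen-ball neighbourhoods of the diagonal lose mass; since the lemma as stated, and its only use in Lemma \ref{lemma:limit_to_diag}, require only the qualitative conclusion $(\mu_{\psi'}\times\mu_\psi)(\Delta(M))=0$, your shorter argument fully suffices. Your parenthetical alternative via Lemma \ref{lemma:I_integrable} is also valid but reintroduces exactly the machinery your main argument avoids.
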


\begin{proof}
 We can cover $\Delta(M)$ by products $B(x,\delta,T) \times B(x,\delta,T)$,
where $x$ runs over a $(T,\delta)$-spanning set $E_{\delta,T}$.  Applying Lemma
\ref{lemma:Franco} followed by Lemma \ref{lem:coho-to-neg}, we have some $\epsilon, k_\delta,k_\delta^\prime>0$ such that
\begin{align*}
(\mu_{\psi^\prime}\times \mu_\psi)(\Delta(M))
&\le
\sum_{x \in E_{\delta,T}} \mu_{\psi^\prime}(B(x,\delta,T))\mu_\psi(B(x,\delta,T))
\\
&\le 
k_\delta\sum_{x \in E_{\delta,T}} \exp\left( \int_0^T \psi^\prime(X^tx) \, dt-P(\psi^\prime)T+ \int_0^T \psi(X^tx) \, dt-P(\psi)T\right)
\\
&\le
k_\delta' \sum_{x \in E_{\delta,T}}
\exp\left( \int_0^T \psi^\prime(X^tx) \, dt-P(\psi^\prime)T -\epsilon T\right),
\end{align*}
Since $E_{\delta,T}$ was an arbitrary
$(T,\delta)$-spanning set, 
$$
(\mu_{\psi^\prime}\times \mu_\psi)(\Delta(M))\leq k_\delta'e^{-(P(\psi')+\epsilon)T}\inf\bigg\{\sum_{x \in E}
e^{\int_0^T \psi^\prime(X^tx) \, dt}:E \mbox{ is }(T,\delta)\text{-spanning}\bigg\}.
$$
Provided $\delta>0$ is chosen sufficiently small, we can take $T\to\infty$ and use the topological definition of pressure to conclude that $(\mu_{\psi^\prime} \times \mu_\psi)(\Delta(M))=0$.
\end{proof}

\begin{lemma}\label{lemma:limit_to_diag} If there exists a nested collection 
$\{B_R\}_{0< R\leq R_0}$ of open neighbourhoods of $\Delta(M)$ with $\bigcap_{0<R \le R_0} B_R = \Delta(M)$, such that 
$$
\lim_{R\rightarrow 0}\lim_{T\rightarrow \infty}\int_{B_R}\Lambda \, 
d(\mu_{\varphi,T}^0\times\mu_{\varphi,T+1})=0,
$$ 
then the following limit exists, and equality holds:
$$
\lim_{T\rightarrow \infty}\int \Lambda \, d(\mu_{\varphi,T}^0\times\mu_{\varphi,T+1})=\int \Lambda \, d(\mu_{\varphi^*}\times \mu_{\varphi}).
$$
\end{lemma}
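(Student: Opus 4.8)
The plan is to treat this as a soft gluing argument: the genuine analytic content --- that $\Lambda$ has an integrable singularity along $\Delta(M)$ and that the relevant product equilibrium state assigns $\Delta(M)$ zero mass --- is already contained in Lemmas~\ref{lemma:I=O(1/r)}, \ref{lemma:O(1/r)_integrable}, \ref{lemma:I_integrable} and \ref{lem:diagonal-zero-measure}, and here it remains only to combine these with weak$^*$ convergence. Write $\nu_T := \mu_{\varphi,T}^0 \times \mu_{\varphi,T+1}$ and $\nu := \mu_{\varphi^*} \times \mu_{\varphi}$. By Theorem~\ref{weighted-nonnull-equi} and Theorem~\ref{weighted-equi} (together with the identification of the limiting measures made above), $\mu_{\varphi,T}^0 \to \mu_{\varphi^*}$ and $\mu_{\varphi,T+1} \to \mu_\varphi$ in the weak$^*$ topology, so $\nu_T \to \nu$ weak$^*$ on $M \times M$. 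Recall also that $\Lambda$ is continuous on $(M\times M)\setminus\Delta(M)$ with $|\Lambda(x,y)| \le K/r(x,y)$ by Lemma~\ref{lemma:I=O(1/r)}, that $\int |\Lambda| \, d\nu < \infty$ (note that $\mu_{\varphi^*}$ is the equilibrium state of the H\"older function $\varphi$ or $\varphi + f_{\xi(\varphi)}$, the latter being H\"older since $f_{\xi(\varphi)}$ has a smooth representative up to $X$-cohomology), and that $\nu(\Delta(M)) = 0$ by Lemma~\ref{lem:diagonal-zero-measure}.

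First I would fix a value of $R \in (0,R_0]$ with $\nu(\partial B_R) = 0$. For the family of metric tubes $B_R = \{(x,y) : r(x,y) < R\}$ --- the family one actually exhibits when verifying the hypothesis --- this holds for all but countably many $R$, since $R \mapsto \nu(\{r \le R\})$ is nondecreasing. For such an $R$, the bounded function $\Lambda \cdot \mathbbm{1}_{(M\times M)\setminus B_R}$ has its discontinuity set contained in $\partial B_R$, hence is continuous $\nu$-almost everywhere, so the portmanteau theorem gives
\[
\lim_{T\to\infty} \int_{(M\times M)\setminus B_R} \Lambda \, d\nu_T = \int_{(M\times M)\setminus B_R} \Lambda \, d\nu.
\]
Combined with the hypothesis, which asserts that $c_R := \lim_{T\to\infty}\int_{B_R}\Lambda\,d\nu_T$ exists, this shows that $\lim_{T\to\infty}\int\Lambda\,d\nu_T$ exists and equals $c_R + \int_{(M\times M)\setminus B_R}\Lambda\,d\nu$ for every such $R$. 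Since the left-hand side does not depend on $R$, letting $R \to 0$ and using $c_R \to 0$ (hypothesis) together with $\int_{(M\times M)\setminus B_R}\Lambda\,d\nu \to \int\Lambda\,d\nu$ (dominated convergence: $\mathbbm{1}_{(M\times M)\setminus B_R} \to \mathbbm{1}_{(M\times M)\setminus\Delta(M)}$ pointwise, dominated by $|\Lambda| \in L^1(\nu)$, and $\nu(\Delta(M))=0$) yields $\lim_{T\to\infty}\int\Lambda\,d\nu_T = \int\Lambda\,d\nu$, as required.

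The one point needing care --- and what I expect to be the main obstacle --- is justifying $\int_{(M\times M)\setminus B_R}\Lambda\,d\nu_T \to \int_{(M\times M)\setminus B_R}\Lambda\,d\nu$ for an \emph{arbitrary} given nested family $\{B_R\}$, where one cannot simply invoke $\nu(\partial B_R)=0$. To handle this I would replace $\mathbbm{1}_{(M\times M)\setminus B_R}$ by a continuous cutoff $h_R \colon M\times M \to [0,1]$ equal to $1$ off $B_R$, vanishing on a metric tube around $\Delta(M)$ contained in $B_R$ (such a tube exists by compactness of $\Delta(M)$, and $h_R$ by Urysohn's lemma), and with $h_R \to 1$ pointwise off $\Delta(M)$. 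Then $\Lambda h_R$ extends to a bounded continuous function on $M\times M$, so $\int \Lambda h_R\,d\nu_T \to \int \Lambda h_R\,d\nu$, while $\int\Lambda(1-h_R)\,d\nu_T = \int_{B_R}\Lambda\,d\nu_T - \int_{B_R}\Lambda h_R\,d\nu_T$; the first term is controlled by the hypothesis and the second is $O(\nu_T(B_R))$. The remaining obstacle in this formulation is therefore a uniform-in-$T$ bound $\limsup_{T\to\infty}\nu_T(B_R) \to 0$ as $R\to 0$, which I would obtain by re-running the Gibbs-type estimate of Lemma~\ref{lemma:Franco} exactly as in the proofs of Lemmas~\ref{lemma:I_integrable} and \ref{lem:diagonal-zero-measure}.
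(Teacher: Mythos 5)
Your argument is correct and follows essentially the same route as the paper's: split the integral into the part over $B_R$ and the part over $A_R=(M\times M)\setminus B_R$, pass to the limit in $T$ on $A_R$ via weak$^*$ convergence, and then let $R\to 0$ using the hypothesis together with the $(\mu_{\varphi^*}\times\mu_{\varphi})$-integrability of $\Lambda$ and $(\mu_{\varphi^*}\times\mu_{\varphi})(\Delta(M))=0$. Your care in choosing $R$ with $(\mu_{\varphi^*}\times\mu_{\varphi})(\partial B_R)=0$ tightens a step the paper's proof passes over silently, and since the family exhibited later is exactly the metric tube family $B_R=\{(x,y):r(x,y)<R\}$, your first argument already suffices and the cutoff variant in your final paragraph is not needed.
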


\begin{proof}
Suppose the hypothesis is satisfied. We have that 
$\mu_{\varphi,T}^0\times\mu_{\varphi,T+1}\rightarrow \mu_{\varphi^*} \times \mu_{\varphi}$ 
in the weak$^*$ topology. 
Let $A_R=(M \times M) \setminus B_R$, then as $\Lambda$ is continuous away from the diagonal, 
$$
\lim_{T\rightarrow \infty}\int_{A_R} \Lambda \, d(\mu_{\varphi,T}^0\times\mu_{\varphi,T+1})
=\int_{A_R} \Lambda \, d(\mu_{\varphi^*}\times \mu_{\varphi}).
$$ 
Now, as $\Lambda$ is $(\mu_{\varphi^*} \times \mu_{\varphi})$--integrable and,
by Lemma \ref{lem:diagonal-zero-measure}, 
$(\mu_{\varphi^*}\times \mu_{\varphi})(\Delta(M))=0$, we have
$$
\lim_{R \to 0} \int_{B_R} \Lambda \, d(\mu_{\varphi^*} \times \mu_{\varphi})
=0.
$$ 
These facts, along with the hypothesis and the triangle inequality, yield that, for 
$\delta>0$, there exist $T_0>0$ such that $T>T_0$ implies 
$$
\bigg|\int \Lambda \, d(\mu_{\varphi,T}^0\times\mu_{\varphi,T+1})
-\int \Lambda \, d(\mu_{\varphi^*} \times \mu_{\varphi})\bigg| < \delta.
$$
\end{proof}

Now we must exhibit the sets $B_R$ and show they satisfy the required property. To do this we will consider bounds for the integral of $\Lambda$ with respect to $\mu_{\varphi,T+1}$. Recall the notation of Lemma \ref{lemma:I_integrable}.

\begin{lemma}
There exists $\delta,R,\alpha,Q>0$, independent of $T$, such that for all $x\in M$ and 
$\delta/2\lambda^T\leq R$, 
$$
\int_{B(x,R)\setminus B(x,\delta/2\lambda^T)} |\Lambda(x,y)|\, d\mu_{\varphi,T+1}(y)\leq QR^\alpha.
$$ 
\end{lemma}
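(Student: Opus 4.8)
The plan is to bound $|\Lambda|$ pointwise by $K/r(x,y)$ via Lemma~\ref{lemma:I=O(1/r)}, to establish a \emph{uniform-in-$T$} upper bound for the $\mu_{\varphi,T+1}$-measure of small metric balls, and then to integrate this against the pointwise bound over a dyadic decomposition of the annulus $B(x,R)\setminus B(x,\delta/2\lambda^T)$.

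\textbf{Step 1 (a uniform ball estimate).} I would first extract from the proofs of Lemma~\ref{lemma:Franco} and Lemma~\ref{lemma:I_integrable}, applied with $\psi=\varphi$, the following finite-level statement. Let $\epsilon>0$ and the H\"older function $v$ be as furnished by Lemma~\ref{lem:coho-to-neg} for $\varphi$, let $\lambda>1$ be the expansion constant of Lemma~\ref{lemma:I_integrable}, and set $\theta:=1+\epsilon/\log\lambda>1$. Then there are $\delta>0$, $R_0>0$ and $C_0>0$, all independent of $x$ and $T$, such that
\[
\mu_{\varphi,T+1}\bigl(B(x,\rho)\bigr)\le C_0\,\rho^{\theta}
\qquad\text{for all }x\in M,\ \text{all large }T,\ \text{and all }\rho\in\Bigl(\tfrac{\delta}{2\lambda^{T+1}},\,R_0\Bigr].
\]
Indeed, for such $\rho$ put $L=L(\rho):=(\log(\delta/2)-\log\rho)/\log\lambda$; the inequality $\rho>\delta/(2\lambda^{T+1})$ is exactly $L<T+1$, so the orbital-measure bound in the proof of Lemma~\ref{lemma:Franco} gives $\mu_{\varphi,T+1}(B(x,\delta,L))\le C_\delta\exp(\int_0^L\varphi(X^tx)\,dt-P(\varphi)L)$, which by Lemma~\ref{lem:coho-to-neg} is at most $C_\delta e^{2\|v\|_\infty}e^{-\epsilon L}$. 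Combining this with the geometric comparison $B(x,\rho)\subset B(x,\delta/2,L(\rho))$ and the orbit-length argument of Lemma~\ref{lemma:I_integrable} yields $\mu_{\varphi,T+1}(B(x,\rho))\le C'\,\rho\,e^{-\epsilon L(\rho)}=C'(2/\delta)^{\epsilon/\log\lambda}\rho^{\theta}$, as claimed. The only new point relative to Lemma~\ref{lemma:I_integrable} is that the bound is kept at the level of the orbital measures $\mu_{\varphi,T+1}$ rather than passed to the limit $\mu_\varphi$, and that the admissible radii are recorded to reach down to scale $\lambda^{-T}$.

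\textbf{Step 2 (dyadic summation).} Fix $x\in M$ and $R$ with $\delta/2\lambda^T\le R\le R_0$. Let $N$ be the least non-negative integer with $2^{-N-1}R<\delta/2\lambda^T$; then the shells $A_n:=B(x,2^{-n}R)\setminus B(x,2^{-n-1}R)$, $0\le n\le N$, cover $B(x,R)\setminus B(x,\delta/2\lambda^T)$, and by minimality of $N$ every radius $2^{-n}R$ lies in $[\delta/2\lambda^T,R]\subset(\delta/2\lambda^{T+1},R_0]$. On $A_n$ one has $r(x,y)\ge 2^{-n-1}R$, hence $|\Lambda(x,y)|\le K2^{n+1}/R$ by Lemma~\ref{lemma:I=O(1/r)}, while Step~1 gives $\mu_{\varphi,T+1}(A_n)\le\mu_{\varphi,T+1}(B(x,2^{-n}R))\le C_0(2^{-n}R)^{\theta}$. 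Therefore
\[
\int_{B(x,R)\setminus B(x,\delta/2\lambda^T)}|\Lambda(x,y)|\,d\mu_{\varphi,T+1}(y)
\le\sum_{n=0}^{N}\frac{K2^{n+1}}{R}\,C_0\,(2^{-n}R)^{\theta}
\le 2KC_0\,R^{\theta-1}\sum_{n=0}^{\infty}2^{-n(\theta-1)},
\]
and since $\theta>1$ the geometric series converges to $(1-2^{-(\theta-1)})^{-1}$. This proves the lemma with $\alpha:=\theta-1>0$ and $Q:=2KC_0/(1-2^{-(\theta-1)})$, both independent of $T$ and $x$; the estimate is uniform in $T$ even though the number of shells $N$ grows like $T\log\lambda/\log2$.

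\textbf{Main obstacle.} The substantive work is Step~1: one must re-enter the proofs of Lemmas~\ref{lemma:Franco} and~\ref{lemma:I_integrable} to confirm that the constants $C_\delta$, $C_0$ and the exponent $\theta$ are genuinely uniform in both $x$ and $T$, and that the orbital-measure estimate is valid down to radii of order $\lambda^{-T}$ — which is precisely why the inner cutoff in the statement is taken to be a fixed multiple of $\lambda^{-T}$. Once Step~1 is in hand, Step~2 is a routine dyadic estimate whose output is automatically $T$-independent.
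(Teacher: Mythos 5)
Your proposal is correct and follows essentially the same route as the paper: a $T$-uniform power bound $\mu_{\varphi,T+1}(B(x,\rho))\le C\rho^{1+\epsilon/\log\lambda}$ obtained by combining the orbital-measure Gibbs estimate from the proof of Lemma \ref{lemma:Franco} with Lemma \ref{lem:coho-to-neg} and the geometric comparison from Lemma \ref{lemma:I_integrable}, followed by a dyadic decomposition of the annulus and summation of the resulting geometric series. Your exponent $\alpha=\theta-1=\epsilon/\log\lambda$ and constant $Q$ match the paper's, and your explicit attention to the admissible range of radii down to scale $\lambda^{-T}$ (i.e.\ the condition $L(\rho)<T+1$) is exactly the point the paper handles via the hypothesis $T>L(\rho)$.
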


\begin{proof} Choose $\delta$ as in the proof of Lemma \ref{lemma:I_integrable} to be smaller than the expansivity constant for $X$. Our calculations there also show that there exists $R>0$ such that whenever $0<\rho<R$
$$
\mu_{\varphi,T+1}(B(x,\rho))\leq \frac{\rho}{ak_1}\mu_{\varphi,T+1}(B(x,\delta,L(\rho))).
$$
Now, by the proof of Lemma \ref{lemma:Franco}, there exists some $C_{\delta}>0$ such that for $T>L(\rho)$, 
\begin{equation}\label{eqn:gibbs-for-orbital}
\mu_{\varphi,T+1}(B(x,\delta,L(\rho)))\leq C_\delta\exp{\left(\int_0^{L(\rho)}\varphi(X^t(x))dt-P(\varphi)L(\rho))\right)}.
\end{equation}
By Lemma \ref{lem:coho-to-neg} we have $\epsilon, K_\delta, K_\delta'>0$ such that  
$$
\mu_{\varphi,T+1}(B(x,\rho))\leq
\frac{K_\delta \rho}{2ak_1}e^{-\epsilon L(\rho)}=K_\delta'\rho^{1+\tfrac{\epsilon}{\log \lambda}}.
$$
Set $\alpha=\epsilon/\log \lambda$, and define $N_T=\min\{n\in \N : R/2^{n}\leq
\delta/2\lambda^T\}$. Let 
$$
A_n(x)=B(x,R/2^{n-1})\setminus B(x,R/2^{n});\text{ for }x\in M\text{ and }1\leq n \leq N_T -1,
$$ 
$$
A_{N_T}(x)=B(x,R/2^{N_T-1})\setminus B(x,\delta/2\lambda^T).
$$
Now, splitting our integral over these annuli, and using Lemma \ref{lemma:I=O(1/r)}, we have
\begin{align*}
\int_{B(x,R)\setminus B(x,\delta/2\lambda^T)} |\Lambda(x,y)|\, d\mu_{\varphi,T+1}(y)&=\sum_{n=1}^{N_T}\int_{A_n(x)} |\Lambda(x,y)|\, d\mu_{\varphi,T+1}(y)\\
&\leq \sum_{n=1}^{N_T}\frac{2^{n}K}{R}\mu_{\varphi,T+1}(B(x,R/2^{n-1}))\\
&\leq 2KK_\delta'R^\alpha\sum_{n=0}^{N_T-1}\frac{1}{2^{\alpha n}}\leq \frac{2KK_\delta'}{1-2^{-\alpha}} \, R^\alpha.
\end{align*}
Setting $Q=2KK_\delta'/(1-2^{-\alpha})$, we are done.
\end{proof}

Now, let 
$$ B_R=\bigcup_{x\in M}(\{x\}\times B(x,R))
\text{ and } 
D=\bigcup_{x\in M}(\{x\}\times B(x,\delta/2\lambda^T)).
$$ 
It is clear that the $B_R$ limit to the diagonal in the required way, so we are done if we show 
the integral limit property in Lemma \ref{lemma:limit_to_diag}. By Fubini's Theorem, 
$$
\int_{B_R\setminus D} |\Lambda|\, d(\mu_{\varphi,T}^0\times\mu_{\varphi,T+1})\leq QR^\alpha.
$$  

It remains to describe the integral on $D$. As 
our measures are supported on periodic orbits, we have 
$$
\int_{D} |\Lambda|\, d(\mu_{\varphi,T}^0\times\mu_{\varphi,T+1})
=\int_{x\in |\P_T(0)|}\int_{y\in B(x,\delta/2\lambda^T)\cap |\P_T^{\prime}|} 
|\Lambda(x,y)|\, d\mu_{\varphi,T}^0(x) d\mu_{\varphi,T+1}(y),
$$ 
where $|\P_T(0)|,|\P_T^{\prime}|$ denote the set of points on orbits in $\P_T(0),\P_T^\prime$ respectively. Now, as we chose $\delta$ smaller than the expansivity constant, we ensure that for any $x\in |\P_T(0)|,$
$$
B(x,\delta/2\lambda^T)\cap |\P_T^{\prime}|=\varnothing.
$$ 
If this were not the case then there would be a distinct periodic orbit from that of $x$ which intersects this ball, whilst also having comparable period to $x$. This violates expansivity. 

So the above integral is zero, and we can conclude 
$$
\lim_{T\rightarrow\infty}\bigg|\int_{B_R} \Lambda\, d(\mu_{\varphi,T}^0\times\mu_{\varphi,T+1})\bigg|
\leq 
QR^\alpha.
$$
Thus
\[
\lim_{R\rightarrow 0}\lim_{T\rightarrow\infty}\bigg|\int_{B_R} \Lambda\, d(\mu_{\varphi,T}^0\times\mu_{\varphi,T+1})\bigg|=0
\]
 as required.

\begin{remark}
It is interesting to ask whether one can obtain versions of Theorem \ref{thm:main-homfull}
and Theorem \ref{thm:hel-homfull} with $\mathcal P_{T+1}$ replaced by
$
\mathcal P_{T+1}(0)$.
One suspects this is the case;
however, we were unable to prove the estimate (\ref{eqn:gibbs-for-orbital}) for the orbital measures corresponding to the null-homologous orbits $\mathcal P_{T+1}(0)$.
\end{remark}

\begin{remark}
As we remarked in the introduction, the helicity is an invariant of volume-preserving diffeomorphisms. 
Arnold \cite{arnold} conjectured that it is invariant under volume-preserving homeomorphisms. 
Unfortunately, our results do not shed any light on this conjecture since a homeomorphism need not
preserve the quantities $\int_\gamma \varphi^u$. Indeed, if they are preserved then the flows are already smoothly conjugate \cite{llave-moriyon}.
For further discussion of this problem, see \cite{muller-spaeth}.
\end{remark}

\end{document}